\newcommand{\Z}{\mathbb{Z}}
\renewcommand{\O}{\mathcal{O}}
\newcommand{\Q}{\mathbb{Q}}
\renewcommand{\S}{\mathfrak{S}}
\newcommand{\id}{\text{id}}
\renewcommand{\P}{\mathbb{P}}
\newcommand{\A}{\mathbb{A}}
\newcommand{\C}{\mathbb{C}}
\DeclareMathOperator{\GL}{GL}
\newcommand{\ind}{\text{ind}}
\newcommand{\codim}{\text{codim}\,}
\newcommand{\Spec}{\text{Spec}}
\DeclareMathOperator{\Proj}{Proj}
\DeclareMathOperator{\T}{T}
\DeclareMathOperator{\TS}{TS}
\DeclareMathOperator{\mult}{mult}
\newcommand{\Pic}{\text{Pic}}
\newcommand{\Aut}{\text{Aut}}
\newcommand{\End}{\text{End}}
\newcommand{\NS}{\text{NS}}
\newcommand{\Hom}{\text{Hom}}
\newcommand{\Alb}{\text{Alb}}
\newcommand{\comment}[1]{}
\newtheorem{theorem}{Theorem}
\newtheorem {lemma}{Lemma}
\newtheorem{question}{Question}
\newtheorem {corollary}{Corollary}
\newtheorem {proposition}{Proposition}
\theoremstyle{definition}
\newtheorem{formula}{Formula}
\theoremstyle {definition}
\newtheorem*{remark}{Remark}
\begin{document}

\baselineskip=16.75pt
\title{Automorphisms of Hilbert Schemes of Points on Abelian Surfaces}
\author{Patrick Girardet}
\address{Department of Mathematics, University of California San Diego}
\email{pgirarde@ucsd.edu}

\begin{abstract}
Belmans, Oberdieck, and Rennemo asked whether natural automorphisms of Hilbert schemes of points on surfaces can be characterized by the fact that they preserve 
the exceptional divisor of non-reduced subschemes. Sasaki recently published examples, independently discovered by the author, of automorphisms on 
the Hilbert scheme of two points of certain abelian surfaces which preserve the exceptional divisor but are nevertheless unnatural, giving a negative answer to the question. We construct additional examples for abelian surfaces of unnatural automorphisms which preserve the exceptional divisor of the Hilbert scheme of an arbitrary number of points. The underlying abelian surfaces in these examples have Picard rank at least 2, and hence are not 
generic. We prove the converse statement that all automorphisms are natural on the Hilbert scheme of two points for a principally polarized abelian surface of Picard rank 1. Additionally, we prove the same if the polarization has self-intersection a perfect square. 
\end{abstract}
\maketitle

\tableofcontents{} 

\section{Introduction}
\noindent
Let $X$ be a smooth complex projective surface and let $X^{[n]}$ denote the Hilbert scheme of $n$ points on $X$. Any automorphism 
$g: X\rightarrow X$ induces an automorphism 
\[
g^{[n]}: X^{[n]} \rightarrow X^{[n]}
\]
defined by the action of the pullback $(g^{-1})^*$, where we view the points of $X^{[n]}$ as ideal sheaves. We call an automorphism 
of $X^{[n]}$ \textit{natural} if it arises as $g^{[n]}$ for some automorphism $g$ of $X$. 

This procedure induces an injective homomorphism 
\[
(-)^{[n]}:\Aut(X)\rightarrow \Aut(X^{[n]})
\]
of groups. The injectivity can be seen by considering the action of a natural automorphism on the locus of subschemes in 
$X^{[n]}$ supported at a single point with multiplicity $n$. The converse question of determining for which $X$ this map is surjective, 
i.e. when all automorphisms of $X^{[n]}$ are natural, has been studied for some time and is still open in general. 

A classic result in the subject is that of Beauville \cite[Section 6]{Beauville_unnatK3}, who showed that if 
$X$ is a general smooth quartic K3 hypersurface in $\P^3$ then one can construct an involution on $X^{[2]}$ which is not natural. 
The construction and proof of unnaturality are geometric in nature. Two points on $X$ define a line in $\P^3$ which will then intersect $X$ in a 
complementary length 2 subscheme, yielding an involution on $X^{[2]}$. This automorphism is not natural because any natural automorphism would need 
to preserve the exceptional divisor $E$ on $X^{[2]}$ of nonreduced subschemes, but this involution does not do so. 
Subsequent works have often focused on studying automorphisms of $X^{[n]}$ for K3 surfaces $X$ and generalized Kummer varieties as 
these are examples of hyperkähler varieties. Some of this work will be reviewed below. One may also study naturality of automorphisms for symmetric products of curves - this is largely settled, and we will review this work as well. 

More recently, Belmans, Oberdieck, and Rennemo studied this question of naturality of automorphisms of Hilbert schemes of points 
of surfaces in \cite{BOR}, proving that if $X$ is a weak Fano or general type smooth projective surface, then all automorphisms of $X^{[n]}$ 
are natural except for the case $X = C_1\times C_2$ where $C_1, C_2$ are smooth curves which either are both genus 0 or are both 
of genus at least 2. 
Thus, the question of naturality of automorphisms is mostly settled for surfaces to one end or another of the positivity spectrum, 
whereas the Calabi-Yau region in the middle can allow for interesting unnatural automorphisms. It has been shown that unnatural automorphisms necessarily do not preserve the exceptional divisor with $X$ either 
a K3 or Enriques surface \cite{boissiere_sarti}, \cite{hayashi_2018}.

The following question thus appears in \cite{BOR}:
\begin{question}\label{BOR_question}
Suppose $X$ is a smooth projective surface and $g:X^{[n]}\rightarrow X^{[n]}$ is an automorphism preserving the exceptional divisor. Unless 
$X=C_1\times C_2$ and $n=2$, does it follow that $g$ is natural? 
\end{question}
An affirmative answer to this question would imply that while unnatural automorphisms can exist, there is some measure of control 
on the groups $\Aut(X^{[n]})$ in that those elements which don't come from $\Aut(X)$ can be detected simply by considering their action on multiplicities of points. 

Recently, Sasaki gave examples in \cite{sasaki2023nonnatural} of automorphisms on the Hilbert scheme of two points of certain 
abelian surfaces which preserve the exceptional divisor but which can be shown to be unnatural, giving an answer in the negative to the previous 
question. These examples were independently discovered by the author and presented in a talk but not published at the time. 
Here we construct counterexamples that work for the Hilbert scheme of $n$ points of certain abelian surfaces for all $n$ (Subsection \ref{nilpotent_end_counterex}): 
\begin{theorem}
For all $n\ge 2$, there exist abelian surfaces $A$ and unnatural automorphisms $g:A^{[n]}\rightarrow A^{[n]}$ such that $g(E) = E$.
\end{theorem}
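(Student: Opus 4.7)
The idea is to twist the natural translation action of $A$ on the Hilbert scheme by a nilpotent endomorphism composed with the summation map, producing an automorphism that acts on points by a ``$Z$-dependent translation''. For any $n\ge 2$, take $A=E\times E$ for an elliptic curve $E$ and let $\phi\in\End(A)$ be the nilpotent endomorphism $\phi(x,y)=(y,0)$, which satisfies $\phi^2=0$. Let $\sigma:A^{[n]}\to A$ be the summation (Albanese) morphism and $m:A\times A^{[n]}\to A^{[n]}$ the translation action $(a,Z)\mapsto t_a(Z)$. I would define
\[
g:A^{[n]}\to A^{[n]},\qquad g(Z)=t_{\phi(\sigma(Z))}(Z),
\]
i.e.\ $g=m\circ(\phi\circ\sigma,\id)$, which is a morphism as a composition of morphisms; this formulation bypasses any need to lift an automorphism through the Hilbert--Chow morphism. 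A short computation yields $\sigma\circ g=(\id+n\phi)\circ\sigma$; since $\phi^2=0$ the endomorphism $\id+n\phi$ is invertible in $\End(A)$ with inverse $\id-n\phi$, and setting $\psi=-(\id+n\phi)^{-1}\phi$, the morphism $h(Z)=t_{\psi(\sigma(Z))}(Z)$ is a two-sided inverse to $g$.

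The invariance $g(E)=E$ is immediate: translation by an element of $A$ preserves the scheme-theoretic support structure of a subscheme, so $g$ preserves every stratum of the Hilbert--Chow stratification, and in particular sends non-reduced subschemes to non-reduced subschemes. To show $g$ is unnatural, I would suppose for contradiction that $g=\tilde g^{[n]}$ for some $\tilde g\in\Aut(A)$. On the open locus $U\subset A^{[n]}$ of configurations of $n$ distinct points, both $g$ and $\tilde g^{[n]}$ lift to automorphisms of the connected variety $A^n\setminus\Delta$: our $g$ lifts to $(P_1,\ldots,P_n)\mapsto(P_i+\phi(\Sigma))_i$ with $\Sigma=P_1+\cdots+P_n$, and $\tilde g^{[n]}$ lifts to $(P_1,\ldots,P_n)\mapsto(\tilde g(P_i))_i$. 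Any two lifts of the same map differ by a deck transformation, so there exists a single $\pi\in S_n$ with $\tilde g(P_i)=P_{\pi(i)}+\phi(\Sigma)$ holding identically on $A^n\setminus\Delta$.

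If $\pi=\id$, then fixing $P_2,\ldots,P_n$ and varying $P_1$ gives $\tilde g(P_1)=(\id+\phi)(P_1)+\phi(P_2+\cdots+P_n)$; independence of the left side from $(P_2,\ldots,P_n)$ forces $\phi(P_2+\cdots+P_n)$ to be constant in those variables, hence $\phi=0$. If instead $\pi(1)\ne 1$, then in $\tilde g(P_1)=P_{\pi(1)}+\phi(\Sigma)$ I would expand $\phi(\Sigma)=\phi(P_1)+\phi(P_{\pi(1)})+\phi(\text{rest})$ and rearrange to $\tilde g(P_1)-\phi(P_1)=(\id+\phi)(P_{\pi(1)})+\text{const}$ with the other points fixed; the left side depends only on $P_1$ and the right side only on $P_{\pi(1)}$, so $(\id+\phi)(P_{\pi(1)})$ must be constant as $P_{\pi(1)}$ varies, contradicting that $\id+\phi$ is an automorphism of $A$. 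The main subtlety is this unnaturality step, because $g$ induces the \emph{honest} automorphism $\id+n\phi$ on the Albanese, so the Albanese map alone cannot detect the obstruction and one is forced to pass to the ordered configuration space and argue that a single $S_n$-element must work uniformly.
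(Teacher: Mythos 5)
Your proposal is correct, and it reaches the same family of examples (nilpotent endomorphisms of $A=E\times E$) by a genuinely different and more self-contained route. On the ordered configuration space your $g$ is the circulant matrix with diagonal entry $\mathrm{id}+\phi$ and off-diagonal entry $\phi$, so it is an instance of the paper's $\mathfrak{S}_n$-equivariant matrix construction; but where the paper must invoke the Ekedahl--Skjelnes/Rydh--Skjelnes description of the smoothable locus of $X^{[n]}$ as a blowup of $X^{(n)}$ to descend an equivariant automorphism of $A^n$ to the Hilbert scheme (Proposition \ref{induced_aut_lemma}, proved in the appendix), you sidestep this entirely by writing $g=m\circ(\phi\circ\Sigma,\mathrm{id})$ as a composition of globally defined morphisms, using that the translation action of $A$ on $A^{[n]}$ is algebraic; invertibility then follows from a short computation with $\phi^2=0$. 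Likewise, preservation of $E$ is immediate for you because each fiber of $\Sigma$ is moved by an honest translation, whereas the paper proves the more general Proposition \ref{multiplicity_lemma}. Your unnaturality argument --- lifting both $g$ and a putative $\tilde g^{[n]}$ through the Galois cover $A^n\setminus\Delta\rightarrow U$ with deck group $\mathfrak{S}_n$ and showing no single permutation can reconcile them --- is a correct, explicit replacement for the paper's appeal to the structure of the homomorphism $\Aut(X^n)^{\mathfrak{S}_n}\rightarrow\Aut(X^{[n]}_{sm})$. The trade-off is scope: your twisted-translation trick only produces automorphisms of this special form, so it cannot substitute for Proposition \ref{induced_aut_lemma} in the paper's other counterexamples (the Pell's equation and cubic totally real field constructions), but for the statement at hand it is a cleaner proof. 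One small point worth making explicit if you write this up: $A^n\setminus\Delta$ is connected because the big diagonal has complex codimension $2$, which is what licenses the single-deck-transformation step.
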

To do this, we use work of Ekedahl and Skjelnes \cite{ekedahl2014recovering} and Rydh and 
Skjelnes \cite{rydh2010intrinsic} describing the (smoothable locus of the) Hilbert scheme of points of an arbitrary scheme as a particular blowup of the 
symmetric product. However, our examples are not for generic abelian surfaces, but rather for specific abelian surfaces of Picard rank at least 2. 
These constructions rely on proving that $\S_n$-equivariant automorphisms on the Cartesian product $X^n$ induce 
automorphisms of the smoothable locus of $X^{[n]}$ for certain schemes $X$ (Proposition \ref{induced_aut_lemma}). We believe that this opens up the 
possibility of constructing many counterexamples to the question of Belmans-Oberdieck-Rennemo for higher dimensional abelian varieties using algebraic number theory, which we will demonstrate in one instance (Subsection \ref{higher_deg_counterex}). 

Furthermore, we address the natural converse question of whether automorphisms of $A^{[2]}$ are natural for abelian surfaces $A$ of 
Picard rank 1. We find an answer in the affirmative for many polarization types: 
\begin{theorem}\label{main_theorem}
If $A$ is a complex abelian surface of Picard rank 1 admitting a polarization $\Theta$ such that either:
\begin{itemize}
\item[(i)] $\Theta^2 = 2$, or
\item[(ii)] $\Theta^2$ is a perfect square
\end{itemize}
then all automorphisms of $A^{[2]}$ are natural. 
\end{theorem}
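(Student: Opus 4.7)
The plan is to analyze $g \in \Aut(A^{[2]})$ through its induced action on $\NS(A^{[2]})$ and then to reduce to fiberwise automorphisms of the Kummer K3 surface via the Albanese fibration.

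Under the Picard rank $1$ hypothesis, $\NS(A^{[2]}) = \Z\Theta \oplus \Z\delta$ is a rank-$2$ lattice with Beauville--Bogomolov form satisfying $q(\Theta) = \Theta^2$, $q(\delta) = -2$, and $q(\Theta,\delta) = 0$, where $\delta = E/2$. First I would study the integer isometries of this lattice that preserve the ample and effective cones. Writing $g^*\Theta = a\Theta + b\delta$, the condition $q(g^*\Theta) = \Theta^2$ reads $\Theta^2(a^2 - 1) = 2b^2$. In case (i), $\Theta^2 = 2$, this becomes $a^2 - b^2 = 1$, forcing $(a,b) = (\pm 1, 0)$; a parallel computation for the image of $\delta$, together with preservation of the effective and ample cones, yields $g^* = \id$ on $\NS(A^{[2]})$.

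In case (ii), $\Theta^2 = (2l)^2$, the same equation is a Pell-type equation with infinitely many integer solutions, so the pure lattice argument is insufficient. Here I would bring in a description of the movable (or ample) cone of $A^{[2]}$ for Picard rank $1$ abelian surfaces, via the wall-crossing machinery of Bayer--Macr\`i or by direct analysis of the Hilbert--Chow contraction and its birational partner. The perfect-square assumption enters precisely because it prevents the existence of a primitive integer class of BB-square zero on $A^{[2]}$ that would support a Lagrangian-fibration-type contraction. Consequently the nontrivial Pell-type isometries cannot be realized by honest automorphisms, so $g^*$ must again act trivially on $\NS(A^{[2]})$.

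Once $g^*$ is trivial on $\NS$, I would use the fact that the summation map $\sigma: A^{[2]} \to A$ is the Albanese morphism to descend $g$ to some $\bar g \in \Aut(A)$; since every such $\bar g$ induces a natural automorphism $\bar g^{[2]}$, composing $g$ with $(\bar g^{-1})^{[2]}$ reduces to the case where $g$ preserves every fiber of $\sigma$. Each fiber is a translate of the Kummer K3 surface $K_1(A)$, and the restriction $g|_{K_1(A)}$ is an automorphism fixing both the induced polarization class and the $16$ exceptional $(-2)$-curves over $A[2]$. The global Torelli theorem for K3 surfaces, combined with the triviality of $g^*$ on the Kummer lattice inside $\NS(A^{[2]})$, forces this restriction to be the identity; rigidity of algebraic families of K3 automorphisms over the irreducible base $A$ then upgrades this to $g = \id$ on every fiber, so $g$ equals the natural automorphism we divided out by.

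The hard part is case (ii): the integer lattice $\NS(A^{[2]})$ with $\Theta^2 = (2l)^2$ admits genuine nontrivial Pell-type isometries, and eliminating them requires a careful study of the movable cone of $A^{[2]}$ and a precise use of the perfect-square hypothesis. The Albanese descent and the Kummer rigidity step are comparatively standard once the lattice action has been pinned down.
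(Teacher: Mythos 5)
Your proposal founders at the first step: for an abelian surface $A$, the group $\NS(A^{[2]})$ has rank $3$, not $2$. Fogarty's computation of $\Pic(X^{(n)})$ produces an extra factor $\NS(\Alb(X))$ whenever the irregularity of $X$ is nonzero, so here $\NS(A^{[2]}) \cong \NS(A)\times\NS(A)\times\Z[B] \cong \Z^3$, the extra generator being $\Sigma^*\Theta$ (equivalently $\phi_\Theta \in \Hom(A,A^{\vee})^{\S_2}$). Relatedly, $A^{[2]}$ is not an irreducible holomorphic symplectic manifold (it has $b_1 \neq 0$; only the fiber $Km(A)$ of the summation map is hyperk\"ahler), so there is no Beauville--Bogomolov form on $H^2(A^{[2]},\Z)$ and the Bayer--Macr\`i wall-crossing description of the movable cone is not available in the form you invoke. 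The correct substitute, and what the actual proof does, is to first show that $g$ descends along $\Sigma$ to an automorphism of $A$, deduce $g^*\Sigma^*\Theta = \Sigma^*\Theta$, and then constrain the remaining $3\times 3$ integer matrix by computing genuine top intersection numbers on the fourfold $A^{[2]}$.

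Your division of labor between the two cases is also inverted. With the correct intersection numbers the constraint on $g^*B = ax + by + cB$ reads $ka^2 - 2c^2 = -2$ where $\Theta^2 = 2k$. When $\Theta^2$ is a perfect square, so $k = 2\ell^2$, this factors as $(c-\ell a)(c+\ell a) = 1$ and admits only the trivial solution: the perfect-square case is the \emph{easy} one. It is the principal polarization $\Theta^2 = 2$ that yields a genuine Pell equation with infinitely many solutions, and ruling those out is the bulk of the paper's work: dimension counts for spaces of sections on $A^{(2)}$ and $A^{[2]}$ via theta functions, a Seshadri constant bound to kill the first nontrivial solution, and Keum's switch involution on the Jacobian Kummer surface to kill the rest. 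Your final fiberwise Torelli step also has a gap --- you would need to explain why $g$ fixes each of the sixteen exceptional curves individually rather than permuting them, and triviality on the rank-$3$ image of $\NS(A^{[2]})$ inside the rank-$17$ lattice $\NS(Km(A))$ does not by itself force $g|_{Km(A)} = \id$ --- but this is moot, since the paper instead descends $g$ to $A^{(2)}$, lifts it to $A^2$ using Belmans--Oberdieck--Rennemo, and finishes by an elementary analysis of $\S_2$-equivariant automorphisms of $A^2$ using $\End(A) = \Z$.
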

We suspect this result should hold for all polarization types, but we are unable to fully apply our techniques at present to all 
polarizations: 

\begin{question}\label{open_question}
If $(A,\Theta)$ is any polarized complex abelian surface of Picard rank 1, does it follow that all automorphisms of $A^{[2]}$ are natural? 
\end{question}

One may ask the same question for all $n$, and we present some evidence in Subsection \ref{higher_deg_counterex} consistent with an affirmative answer (Proposition \ref{no_mat_int_sol}). 
We discuss the obstructions to extending our proof to all polarization types or to higher values of $n$ in Section \ref{open_directions}.

Recall that if $A$ is a complex abelian surface of Picard rank 1 then its group automorphisms are simply multiplication by 
$\pm 1$ \cite[Lemma 1.2]{yoshioka2023}, and every automorphism of $A$ as a variety is the composition of a group automorphism 
followed by a translation \cite[Proposition 1.2.1]{BL_complex_abvar}. Thus, Theorem \ref{main_theorem} gives an explicit description of all 
automorphisms of $A^{[2]}$ for $A$ of Picard rank 1 and appropriate polarization. 

To put our result in perspective, other recent work studying automorphisms of Hilbert schemes of points for K3 surfaces includes calculations of the full automorphism 
groups for generic K3 surfaces in \cite{boissiere2016automorphism} for $n=2$ and 
a subsequent generalization to all $n$ \cite{cattaneo2019automorphisms}, and a recent determination of the automorphism 
group of the Hilbert square of Cayley's K3 surfaces \cite{lee2024automorphisms}. This list is hardly exhaustive of recent 
work on the general theme of automorphisms of Hilbert schemes and related ``punctual'' moduli spaces (e.g. punctual Quot schemes 
\cite{biswas2015automorphisms}). 

One can ask the analogous question for naturality of automorphisms of symmetric products of smooth projective curves. This question has been settled entirely except for the case $g = 2$. One can find a good review of the problem in \cite{ciliberto_sernesi_high_deg} 
which proves naturality of automorphisms of the symmetric product $C^{[d]}$ where $C$ is a smooth projective curve of genus $g$ 
for $g > 2, d \ge 2g-2$. The other cases can be found in \cite{ciliberto_sernesi_middle_deg} for $g > 2, g\le d\le 2g-3$, 
\cite{martens_torelli}, \cite{ran_martens} for $g > 2, 1\le d \le g-2$, \cite{catanese_ciliberto_symmell} for the case 
$g=1$ and all degrees $d$, and \cite{weil1957beweis} for the case $g > 2, d = g-1$ (in which case there is an additional involution 
on $C^{[g-1]}$ if $C$ is not hyperelliptic). For the case $g = d = 2$ it is possible to construct unnatural automorphisms 
by taking a curve $C$ whose Jacobian decomposes as a product $J(C) \cong E\times E$ of isomorphic elliptic curves - see \cite{hayashida_nishi} for examples of such curves. There are infinitely 
many automorphisms of $J(C)$ given by $\text{GL}_2(\Z)$ which fix the canonical divisor $K_C\in J(C)$, and hence lift to the blowup 
$C^{[2]}$ of the Jacobian at the point $K_C$ \cite{munoz_porras}. Though the author has not 
seen this example explicitly written, Ciliberto and Sernesi seem aware of the possibility in \cite{ciliberto_sernesi_high_deg}. 
Some of these naturality results were rediscovered more recently by Biswas and Gómez \cite{biswas_gomez} in the case $g > 2, d > 2g-2$. 

\subsection{Conventions}\hfill\\
Throughout this paper, ``automorphism'' will denote an automorphism as a variety, 
and any group automorphisms will be specifically designated. We will write $\End(A)$ for the group endomorphisms of $A$, and 
$\Aut(A), \Aut^0(A)$ for respectively the full automorphism group scheme (i.e. including translations) and the connected component of the identity. 
\subsection{Plan of the paper}\hfill\\
We will first state some necessary facts regarding Hilbert schemes of points (Subsection \ref{hilb_prelim}) and abelian surfaces (Subsection \ref{abelian_surface_prelim}). We will then outline the 
construction of some unnatural automorphisms of abelian surfaces and higher dimensional abelian varieties which nevertheless preserve the exceptional divisor (Section \ref{counterexample_section}). We calculate 
top intersection numbers of divisor classes in $A^{[2]}$ for $A$ an abelian surface of Picard rank 1 (Subsection \ref{intersection_number_subsection}) and the dimensions of various 
spaces of global sections of lines bundles on $A^{[2]}$ (Subsections \ref{global_sections_calcs}, \ref{global_sections_hilb2}). We then use this to show that the exceptional divisor is fixed by any given  
automorphism of $A^{[2]}$ for appropriate polarization types (Subsections \ref{descent_strategy}, \ref{fix_E_NS}, \ref{fix_E_Pic_subvar}). To do 
so, we will first show that any automorphism of $A^{[n]}$ descends to an automorphism of $A$ under the summation morphism $\Sigma:A^{[n]}\rightarrow A$ (Proposition \ref{summation_descent}), though this alone will not give the theorem as it holds regardless of Picard rank. The intersection numbers we calculate are valid for any polarization 
type while the dimensions of global sections of line bundles are specifically for the principally polarized case. This will yield an automorphism on the symmetric product $A^{(2)}$ which we can then lift to $A^2$ by results in \cite{BOR} and show naturality (Subsection \ref{A2_lift_naturality}).

\subsection{Acknowledgements}\hfill\\
The author wishes to thank Dragos Oprea and Shubham Sinha for many helpful discussions regarding automorphisms of Hilbert schemes of points on surfaces and of other moduli spaces. This work has been supported by NSF grant DMS-1502651. 

\section{Preliminaries}
\subsection{Hilbert schemes of points on surfaces}\label{hilb_prelim}\hfill\\
Let $X$ be a smooth complex projective surface. The Hilbert scheme of $n$ points on $X$, written $X^{[n]}$, parametrizes length 
$n$ subschemes of $X$. It is a smooth complex projective variety of dimension $2n$. The symmetric product $X^{(n)}$ is the 
quotient of the Cartesian product $X^n$ by the action of the symmetric group $\mathfrak{S}_n$ interchanging the factors, and is 
a complex projective variety of dimension $2n$ with singularities along the big diagonal $\Delta\subset X^{(n)}$ of points 
$\sum a_i x_i$ with some multiplicity $a_i \ge 2$. The Hilbert-Chow morphism 
\[
\pi: X^{[n]}\rightarrow X^{(n)}
\]
which sends a subscheme $\mathcal{Z}\in X^{[n]}$ to its support with multiplicities is an isomorphism on the locus of 
reduced subschemes where all points in the support have multiplicity 1, and hence is birational. It is well-known that $\pi$ is crepant 
(i.e. $\pi^*\omega_{X^{(n)}} = \omega_{X^{[n]}}$) and that for $n=2$ the Hilbert scheme $X^{[2]}$ is simply the blowup of 
$X^{(2)}$ along the ideal sheaf of the diagonal, so that $\pi$ is the blowdown map 
\[
\pi: Bl_{\Delta}X^{(2)} \rightarrow X^{(2)}.
\]
For all $n$, we write $E$ for the exceptional divisor of $\pi$, which is the preimage $\pi^{-1}(\Delta)$ of the big diagonal 
$\Delta\subset X^{(n)}$. 
We have a natural projection map 
\[
p: X^n \rightarrow X^{(n)},
\]
such that $p^*\Pic(X^{(n)}) = \Pic(X^n)^{\mathfrak{S}_n}$. Given a line bundle $L\rightarrow X$, we may form the box product 
\[
L^{\boxplus n} = p_1^*L\otimes ... \otimes p_n^*L
\]
on $X^n$, where each $p_i$ is a projection map to a factor of $X$. This line bundle is clearly symmetric, inducing a line 
bundle $L_{(n)}$ on $\Pic(X^{(n)})$ and thus gives a homomorphism 
\[
(-)_{(n)}: \Pic(X) \rightarrow \Pic(X^{(n)}),\, L \mapsto L_{(n)}.
\]
We can pull back $L_{(n)}$ under the Hilbert-Chow morphism to $X^{[n]}$, and will write $L_{[n]}$ for this line bundle $\pi^*L_{(n)}$ 
on $X^{[n]}$. Since $\pi$ is crepant, $\omega_{X^{[n]}} = (\omega_X)_{[n]}$ \cite[Section 2]{BOR}, so that if $X$ is $K$-trivial 
then so is $X^{[n]}$. In particular, if $X$ is an abelian surface then $\omega_{X^{[n]}}$ is trivial. 

If $X$ is a scheme of dimension at least 3, then some of the preceding discussion breaks down. There still exists a Hilbert-Chow 
morphism from $X^{[n]}$ to the symmetric product $X^{(n)}$. However, there can exist points in $X^{[n]}$ which are not limits 
of reduced subschemes, and these points can constitute multiple distinct irreducible components even when $X$ is irreducible 
(see \cite{iarrobino1972reducibility} for examples of this phenomenon). We will call the closure in $X^{[n]}$ of the locus of 
reduced subschemes the \textit{smoothable locus}, and write 
$X_{sm}^{[n]}$ for this component. For $\dim X \le 2$ the smoothable locus is just $X^{[n]}$. For $\dim X \ge 3$, it 
is not fully known for which values of $n$ the smoothable locus differs from $X^{[n]}$, only that there are examples where it does. 
Even more pathologically, in these examples the smoothable locus can be a component of positive codimension in $X^{[n]}$, violating 
our intuition for surfaces that $X^{[n]}$ consists ``mostly'' of collections of $n$ distinct points along with some scheme-theoretic 
behavior when these points run into each other. 
\newline\newline 
In \cite[Section 6]{fogarty2}, Fogarty proves for $X$ a surface that 
\begin{align*}
\Pic(X^n) &\cong \Pic(X)^n \times \Hom(\Alb(X), \Pic^0(X))^{\binom{n}{2}} \\
\Pic(X^{(n)}) &\cong \Pic(X^n)^{\mathfrak{S}_n} \cong \Pic(X) \times \Hom(\Alb(X), \Pic^0(X))^{\S_2}, 
\end{align*}
where the $\Pic(X)$ factor in $\Pic(X^n)^{\mathfrak{S}_n}$ injects into $\Pic(X)^n$ as the $\mathfrak{S}_n$-invariant divisors, $\Alb(X), \Pic^0(X)$ are 
respectively the Albanese and Picard varieties (connected component of the identity in $\Pic(X)$) of $X$. 
The term $\Hom(\Alb(X), \Pic^0(X))$ comes from classes of ``diagonal'' line bundles on $X\times X$, as a map $X\rightarrow \Pic^0(X)$ 
naturally gives a line bundle on $X\times X$ and such a map necessarily factors through $\Alb(X)$ since $\Pic^0(X)$ is an abelian 
variety. Note that $\Hom$ here refers to group homomorphisms rather than morphisms of varieties. In $\Pic(X^n)$ we get one such factor for each of the $\binom{n}{2}$ pairs in $X^n$. 
Since elements of $\Hom(\Alb(X), \Pic^0(X))$ correspond to classes of line bundles on $X\times X$, $\S_2$ acts on 
$\Hom(\Alb(X), \Pic^0(X))$ by swapping the factors of $X\times X$. In $\Pic(X^{(n)})$ the $\S_n$ action 
leaves but a single $\Hom$ factor, from which we take only the $\S_2$-invariant bundles.   

In a subsequent paper \cite{fogarty_sym}, Fogarty shows that the fixed part $\Hom(\Alb(X), \Pic^0(X))^{\S_2}$ can be identified with the 
Neron-Severi group of the Albanese variety of $X$ (see Theorem 3.8 in said paper and the subsequent table), so that 
\[
    \Pic(X^{(n)}) \cong \Pic(X)\times\NS(\Alb(X)).
\]
Additionally, if we set $B = \dfrac{E}{2}$ it is well-known that
\[
\Pic(X^{[n]}) \cong \pi^*\Pic(X^{(n)}) \times \Z[B],
\]
giving a full description of the Picard group of $X^{[n]}$:
\begin{align*}
    \Pic(X^{[n]}) &\cong \Pic(X)\times \Hom(\Alb(X), \Pic^0(X))^{\S_2}\times\Z[B] \\
                &\cong \Pic(X)\times\NS(\Alb(X))\times\Z[B].
\end{align*}
As noted by Lehn \cite[Lemma 3.7]{Lehn_1999} this latter generator 
$B = \frac{E}{2}$ is the same as $-c_1(\O_X^{[n]})$, where $\O_X^{[n]}$ is the tautological vector bundle obtained by pushing 
forward the structure sheaf $\O_{\mathcal{Z}_n}$ of the universal subscheme $\mathcal{Z}_n\subset X^{[n]}\times X$ under the natural 
projection to $X^{[n]}$.

The group of homomorphisms between any complex tori is a free abelian group \cite[Proposition 1.2.2]{BL_complex_abvar} and hence 
is discrete. The two abelian varieties $\Pic^0(X), \Pic^0(X^{[n]})$ have the same dimension 
(see e.g. \cite[Theorem 2.3.14]{göttsche2006hilbert}, \cite{Soergel1993}, \cite{cheah_hilbert_hodge} 
and the fact that the dimension of $\Pic^0$ is the irregularity $\dim H^{0,1}$ - see \cite{hodge-diamond-cutter} to calculate this easily), so that $\Pic^0(X^{[n]})$ lives inside the $\Pic(X)$ factor of $\Pic(X^{[n]})$. This implies 
\[
\NS(X^{[n]}) \cong \NS(X) \times \NS(\Alb(X)) \times \Z[B].
\]

We note that 
while much work on the subject has focused on the case where $X$ is a surface of irregularity zero so that $\Pic(X^{[n]})\cong \Pic(X)\times\Z[B]$, for a surface with $\dim\Alb(X) = h^1(X, \O_X)\neq 0$ the $\NS(\Alb(X))$ factor need not be zero, giving an extra 
term in the formula for $\Pic(X^{[n]})$. 
\newline\newline
We can describe $X^{[2]}$ as either the blowup of $X^{(2)}$ along the ideal sheaf of the diagonal or as the quotient of the blowup 
$Bl_{\Delta}X^2$ of $X^2$ along the diagonal $\Delta$ by the natural involution, see Appendix \ref{Sn_blowup_description} for 
a more detailed discussion. 
These descriptions fit in a natural commutative diagram 
\begin{center}
\begin{tikzcd}
Bl_{\Delta} X^2 \arrow[d, "q"'] \arrow[r, "\hat{\pi}"] & X^2 \arrow[d, "p"] \\
{X^{[2]}} \arrow[r, "\pi"]                     & X^{(2)}           
\end{tikzcd}.
\end{center}
Let us write $F$ for the exceptional divisor of $q:Bl_{\Delta}X^2\rightarrow X^{[2]}$, so that $q^*B = F$ as divisor classes. 
Since $X^2$ is smooth (hence normal) and $\S_2$ is finite, the quotient $X^{(2)}$ is normal \cite[p.5]{mumford1994geometric}. By \cite[Lemma 7.11, 7.12]{debarre2001higher}, the pushforward $\pi_*\O_{X^{[2]}}(mB)$ is 
isomorphic to the trivial sheaf $\O_{A^{(2)}}$ for $m\ge 0$. Thus, if $L$ is a line bundle on $X^{(2)}$ we have by push-pull: 
\begin{formula}\label{pushforward_formula}
    If $m\ge 0$ then
    \[H^0(X^{[2]}, \pi^*L\otimes\O_{A^{[2]}}(mB)) \cong H^0(X^{(2)}, L).\]
\end{formula}
\noindent
The same is true for arbitrary $n$, not just $n=2$, as \cite[Lemma 7.11]{debarre2001higher} applies 
more generally to proper birational morphisms to a normal variety. Since $X^{[2]}$ is the quotient of $Bl_{\Delta}X^2$ by the 
cyclic group $\S_2$, by \cite[Corollary 3.11]{esnault1992lectures} the pushforward $q_*\O_{Bl_{\Delta}X^2}$ 
decomposes as a direct sum of line bundles 
\[
    q_*\O_{Bl_{\Delta}}X^2 \cong \O_{X^{[2]}}\oplus L
\]
where $\O_{A^{[2]}}, L$ are respectively the $1, -1$ eigensheaves of the action of $\S_2$ on $\O_{Bl_{\Delta}X^2}$. By push-pull, 
we conclude that:
\begin{formula}\label{S2_invar_sections_no_pullback}
If $\mathcal{F}$ is a sheaf on $X^{[2]}$, then 
\[
    H^0(X^{[2]}, \mathcal{F}) \cong H^0(Bl_{\Delta}X^2, q^*\mathcal{F})^{\S_2}.
\]
\end{formula}
\noindent
If our sheaf is in fact of the form $\pi^*\mathcal{G}$ 
for a sheaf $\mathcal{G}$ now on $X^{(2)}$, then by commutativity we have the equality 
\[
    H^0(Bl_{\Delta}X^2, q^*\pi^*\mathcal{G}) \cong H^0(Bl_{\Delta}X^2, \hat{\pi}^*p^*\mathcal{G}). 
\]
Since the fibers of $\hat{\pi}$ are connected, we have an isomorphism 
\[
    H^0(Bl_{\Delta}X^2, \hat{\pi}^*p^*\mathcal{G}) \cong H^0(X^2, p^*\mathcal{G})
\]
induced by $\hat{\pi}_*$. The respective involutions on $Bl_{\Delta}X^2, X^2$ commute with $\hat{\pi}$, so that taking 
$\S_2$-invariant sections respects this last isomorphism. The upshot then is that:
\begin{formula}\label{S2_invar_sections}
If $m\ge 0$, then 
\[
    H^0(X^{[2]}, \pi^*\mathcal{G}\otimes \O_{X^{[2]}}(mB)) \cong H^0(X^2, p^*\mathcal{G})^{\mathfrak{S}_2}.
\]
\end{formula}
\noindent
Thus, calculating dimensions of global sections of certain sheaves on $X^{[2]}$ is equivalent to calculating 
invariant global sections on $X^2$. We mention this interpretation as we will eventually calculate the dimension of global sections 
for many line bundles on $X^{[2]}$, while calculating the dimension of invariant sections on $X^2$ can be difficult to do directly.

\subsection{Abelian surfaces}\label{abelian_surface_prelim}\hfill\\
Let $A$ be a complex abelian surface. In the case $X=A$, $\Alb(X) = X$, so that the work of Fogarty in the previous section immediately yields:
\begin{proposition}
For $A$ a complex abelian surface, 
\[
\Pic(A^{[n]}) \cong \Pic(A)\times \NS(A)\times \Z[B],
\] 
where $\NS(A)$ is the Neron-Severi group of $A$. In particular, if $A$ has Picard rank 1, then 
\[
\Pic(A^{[n]}) \cong \Pic(A) \times \Z \times \Z[B].
\]
\end{proposition}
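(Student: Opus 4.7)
The plan is to specialize the general description of $\Pic(X^{[n]})$ from Subsection \ref{hilb_prelim} to the case $X = A$ an abelian surface. Recall that for a smooth complex projective surface we already have
\[
\Pic(X^{[n]}) \cong \Pic(X) \times \NS(\Alb(X)) \times \Z[B],
\]
so the entire content of the proposition is the identification $\Alb(A) = A$ for an abelian variety, together with a trivial unpacking of Picard rank $1$.

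First I would record the fact that the Albanese variety of an abelian variety is canonically itself. This follows from the universal property: fixing the origin, the identity map $A \to A$ is the universal morphism from $A$ into an abelian variety sending $0$ to $0$, since any such morphism to an abelian variety is a group homomorphism \cite[Proposition 1.2.1]{BL_complex_abvar}. Hence $\Alb(A) \cong A$ and $\NS(\Alb(A)) = \NS(A)$. Substituting into the formula above yields the first assertion
\[
\Pic(A^{[n]}) \cong \Pic(A) \times \NS(A) \times \Z[B].
\]

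For the second assertion, Picard rank $1$ means by definition that $\NS(A)$ has rank $1$; since $\NS(A)$ is finitely generated and torsion-free for an abelian variety (it embeds into $\Hom(A,\hat A)$ via $L \mapsto \phi_L$, which is torsion-free), we conclude $\NS(A) \cong \Z$, giving the displayed decomposition. There is no real obstacle in the argument; the proposition is a bookkeeping statement recording the shape of $\Pic(A^{[n]})$ that will be used repeatedly in later sections, in particular to isolate the single $\Z$ summand of $\NS(A)$ when studying Picard rank $1$ abelian surfaces.
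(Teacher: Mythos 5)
Your proposal is correct and matches the paper's own (very brief) argument: the paper likewise obtains the proposition by specializing Fogarty's general formula $\Pic(X^{[n]})\cong\Pic(X)\times\NS(\Alb(X))\times\Z[B]$ to $X=A$ using $\Alb(A)=A$. The extra justifications you supply (the universal property giving $\Alb(A)\cong A$, and torsion-freeness of $\NS(A)$ in the rank $1$ case) are consistent with, and slightly more detailed than, what the paper writes.
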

When $A$ is principally polarized, it is a decent exercise in the theory of abelian varieties to prove this directly.
Given a morphism in $\Hom(\Alb(X), \Pic^0(X)) \cong \Hom(A, A^{\vee})$, under the identification 
$A\cong A^{\vee}$ given by the principal polarization we may thus identify $\Hom(\Alb(X), \Pic^0(X))$ with $\Hom(A, A) = \End(A)$. 
Under this identification, the action of swapping the factors on the $\Hom$ side turns out to coincide with the 
Rosati involution on $\End(A)$, and it is well-known that the Rosati-fixed part of $\End(A)$ may be identified with $\NS(A)$. 
However, this proof requires a principal polarization to identify $A$ with $A^{\vee}$, which is insufficiently general for our 
purposes. 

There is a summation morphism $\Sigma: A^{(n)} \rightarrow A$ defined via 
\[
\Sigma(a_1x_1 + ... + a_kx_k) = a_1x_1 + ... + a_kx_k,
\]
where the latter sum uses the group operation on $A$.
Precomposing this map with $\pi: A^{[n]}\rightarrow A^{(n)}$ gives a summation map from $A^{[n]}$ to $A$ which we will also 
denote with $\Sigma$ and make clear the source from context. The fibers of $\Sigma:A^{[n]}\rightarrow A$ are hyperkähler varieties 
of dimension $2(n-1)$ called generalized Kummer varieties, first studied in \cite[Section 7]{Beauville1983VaritsKD}. 
For $n=2$ these fibers are the usual Kummer K3 surfaces. 
\newline

\noindent
Let $\mathcal{P}_A$ denote the Poincaré bundle on $A\times A^{\vee}$. By \cite[p.78]{mumford2008abelian}, we have 
that 
\begin{formula}\label{sum_pullback}
\[
\Sigma^*D = (1\otimes\phi_D)^*\mathcal{P}_A + \pi_1^*D + \pi_2^*D
\]
\end{formula}
\noindent
in $\Pic(A^2)$ for $D$ a divisor on $A$, where $\phi: A\rightarrow A^{\vee}$ is the map $x \mapsto [t_x^*D - D]$. Thus, $\Sigma^*D = (D, \phi_D, 0)$ in $\Pic(A^{[2]}) \cong \Pic(A)\times \Hom(A,A^{\vee})^{\S_2} \times \Z[B]$. With the identification $\Hom(A,A^{\vee})^{\S_2}\cong \NS(A)$ described previously, we may then identify 
\[
    \Sigma^*D = (D,D,0)\in \Pic(A^{[2]})\cong \Pic(A)\times \NS(A) \times \Z[B].
\]
We will use this formula later to perform a convenient change of basis for $\NS(A)$, by replacing the generator $\phi_{\Theta}$ 
with $\Sigma^*\Theta$ in $\Pic(A^{[2]}), \NS(A^{[2]})$ for $\Theta$ a polarization. 

Observe that this formula implies that $\Sigma^*:\Pic^0(A) \rightarrow \Pic^0(A^{[2]})$ is an injective homomorphism of complex 
abelian varieties. These two abelian varieties have the same dimension as noted in Subsection \ref{hilb_prelim},
and hence $\Sigma^*:\Pic^0(A)\rightarrow \Pic^0(A^{[2]})$ is an isomorphism 
since we are working with complex tori over $\C$. This argument holds regardless of the Picard number of $A$. 

\section{Counterexamples to the question of Belmans, Oberdieck, Rennemo}\label{counterexample_section}
\noindent
Here we give a description of some counterexamples to Question \ref{BOR_question} using special abelian varieties of Picard 
rank at least 2. Our constructions give automorphisms on $A^{[n]}$ for different values of $n$ and $\dim A$, where $A$ is an abelian 
variety.

The key result permitting these constructions is the following: 
\begin{proposition}\label{induced_aut_lemma}
If $X$ is either an affine scheme or a projective scheme over an infinite field $k$,
then any $\S_n$-equivariant automorphism $f:X^n\rightarrow X^n$ induces an automorphism on the smoothable locus of $X^{[n]}$.
\end{proposition}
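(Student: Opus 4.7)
The plan is to descend $f$ to the symmetric product $X^{(n)}$ and then lift the descent through the Hilbert--Chow morphism, using the realization of $X^{[n]}_{sm}$ as an explicit blowup of $X^{(n)}$ from Ekedahl--Skjelnes \cite{ekedahl2014recovering} and Rydh--Skjelnes \cite{rydh2010intrinsic} that the author has flagged as the key input to this section.

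The first step is formal. Since $f: X^n\to X^n$ is $\S_n$-equivariant and $p: X^n\to X^{(n)}$ is the categorical (and, under the hypotheses on $X$, geometric) quotient by $\S_n$, the universal property of the quotient produces a unique automorphism $\bar f: X^{(n)}\to X^{(n)}$ with $\bar f\circ p = p\circ f$. The affine and projective hypotheses on $X$ are exactly the settings in which $X^{(n)} = X^n/\S_n$ is guaranteed to exist as a scheme, so this descent is unambiguous in either case.

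The central step is lifting $\bar f$ through the Hilbert--Chow morphism $\pi: X^{[n]}_{sm}\to X^{(n)}$. By the cited theorems, $\pi$ identifies $X^{[n]}_{sm}$ with the blowup of $X^{(n)}$ along an explicit ideal sheaf $\mathcal{J}\subset \O_{X^{(n)}}$ which is canonically built out of the data of the quotient map $p$ (via a Fitting / norm / discriminant-type construction applied to $p_*\O_{X^n}$). Because $\bar f$ is compatible with $p$ by construction, it preserves any structure defined intrinsically from $p$, so $\bar f^{-1}\mathcal J\cdot \O_{X^{(n)}} = \mathcal J$. By the universal property of blowups, $\bar f$ then lifts uniquely to an automorphism $\widetilde f: X^{[n]}_{sm}\to X^{[n]}_{sm}$ sitting over $\bar f$, which is the induced automorphism we seek.

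The main obstacle is ensuring that the blowup ideal $\mathcal J$ is genuinely canonical, i.e.\ that the descriptions in \cite{ekedahl2014recovering} and \cite{rydh2010intrinsic} express $\mathcal J$ purely in terms of the pair $(X^n, \S_n)$ and the quotient $p$, with no auxiliary choice (such as a fixed projective embedding of $X$) entering into it. A careful reading of those papers should make this functoriality manifest, at which point the preservation of $\mathcal J$ by $\bar f$ is immediate. A secondary subtlety is that the affine and projective cases formally invoke different theorems; however, both yield the same blowup description and can be stitched together by working affine-locally on $X^{(n)}$, since both the hypothesis on $f$ and the conclusion about $\widetilde f$ are local on the base.
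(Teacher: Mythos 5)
Your outline coincides with the paper's: descend $f$ to $\bar f$ on $X^{(n)}$ by the universal property of the quotient, then lift through the Hilbert--Chow morphism using the Ekedahl--Skjelnes / Rydh--Skjelnes identification of $X^{[n]}_{sm}$ with a blowup of $X^{(n)}$, the crux being that $\bar f$ preserves the blowup ideal. However, that crux is exactly the step you do not prove. You write that the ideal is ``canonically built out of the data of the quotient map $p$'' and that therefore $\bar f$ ``preserves any structure defined intrinsically from $p$,'' deferring the verification to ``a careful reading of those papers.'' This is a genuine gap, for two reasons. First, your description of the ideal is not what the cited sources construct: in the Rydh--Skjelnes formalism the ideal is the image of the product of alternators $\alpha(x_1,\dots,x_n)=\sum_{\sigma}(-1)^{|\sigma|}x_{\sigma(1)}\otimes\cdots\otimes x_{\sigma(n)}$ inside $\TS^n_A R$, not a Fitting/norm/discriminant ideal of $p_*\O_{X^n}$. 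Its invariance is therefore not a formal consequence of $\bar f\circ p=p\circ f$ alone; one must use the $\S_n$-equivariance of $f$ \emph{upstairs} to check the identity $f^{\#}(\alpha(x))=\alpha(f^{\#}(x))$, from which $\bar f^{\#}\bigl(\sum_j r_j\alpha(x_j)\alpha(y_j)\bigr)=\sum_j \bar f^{\#}(r_j)\alpha(f^{\#}(x_j))\alpha(f^{\#}(y_j))$ and hence the containment $\bar f^{\#}(\mathcal I)\subseteq\mathcal I$. This computation is the entire content of the paper's proof and cannot be waved away as manifest functoriality.

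Second, two supporting points you omit are where the hypotheses actually enter. The canonical ideal is only described explicitly over \emph{product} affines $U^{(n)}=\Spec(\TS^n R)$, so one needs $X^{(n)}$ to be covered by such charts; for $X$ projective this is where the infinite-field hypothesis is used (any finite set of points misses some hyperplane, so its $\S_n$-orbit lies in a product affine), and one must then glue the local containments, checking that an arbitrary open $U\subset X^{(n)}$ is handled by restriction from the product-affine charts. Finally, containment $\bar f^{\#}(\mathcal I)\subseteq\mathcal I$ is not yet equality of the inverse image ideal sheaf with $\mathcal I$: the paper upgrades the resulting injective endomorphism of the coherent sheaf $\mathcal I$ to an automorphism via a Nakayama-type argument (or one can apply the same containment to $f^{-1}$). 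Your proposal asserts the equality $\bar f^{-1}\mathcal J\cdot\O_{X^{(n)}}=\mathcal J$ outright without either device.
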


We defer the proof of this to the appendix. In short, one can describe the smoothable locus of $X^{[n]}$ as a blowup of the 
symmetric product $X^{(n)}$ at an ideal sheaf supported along the big diagonal by work of Ekedahl and Skjelnes \cite{ekedahl2014recovering} or a similar description by Rydh and 
Skjelnes \cite{rydh2010intrinsic}. An $\S_n$-equivariant automorphism of $X^n$ descends 
to an automorphism of $X^{(n)}$ by the universal property of quotients by finite groups, and will necessarily restrict to the big 
diagonal pointwise. We prove a stronger version of this last statement in 
Proposition \ref{multiplicity_lemma}. One can show moreover that this automorphism will fix the ideal sheaf in question which we blow up to yield 
the smoothable locus of $X^{[n]}$. Thus, the automorphism of $X^{(n)}$ will lift to an automorphism of the blowup by 
\cite[Corollary II.7.15]{hartshorne1977algebraic}. As discussed in the appendix, we believe the hypotheses of this proposition can 
probably be weakened by using the full generality of the constructions of Ekedahl-Rydh-Skjelnes, but these conditions are already 
significantly more general than what we need in this paper. 

Given a $\S_n$-equivariant automorphism $f:X^n\rightarrow X^n$, write $\bar{f},\tilde{f}$ for the associated automorphisms 
on $X^{(n)}$ and the smoothable locus $X_{sm}^{[n]}$. The map $f\mapsto \tilde{f}$ 
induces a group homomorphism $\Aut(X^n)^{\S_n}\rightarrow \Aut(X_{sm}^{[n]})$ as one can easily check. We always have the commutative 
diagram 
\begin{center}
\begin{tikzcd}
\Aut(X) \arrow[d, hook] \arrow[r, "\text{id}"] & \Aut(X) \arrow[d, hook] \\
\Aut(X^n)^{\S_n} \arrow[r]                     & {\Aut(X_{sm}^{[n]})}        
\end{tikzcd}
\end{center}
where $\Aut(X)\hookrightarrow \Aut(X^n)^{\S_n}$ embeds via the diagonal action (to check that the two maps in question on $X^{[n]}$ 
arising from an element in $\Aut(X)$ agree in the diagram, simply check on the dense open locus of reduced subschemes). 

The map $\Aut(X^n)^{\S_n}\rightarrow \Aut(X_{sm}^{[n]})$ is injective for $n\ge 3$. To see this, suppose $f\in\Aut(X^n)^{\S_n}$ is such 
that $\tilde{f}$ is the identity on $X_{sm}^{[n]}$. It must be then that $\bar{f}$ is the identity on $X^{(n)}$, so that 
the image of a point $(x_1,...,x_n)\in X^n$ under $f$ must be $\sigma(x_1,...,x_n)$ for some $\sigma\in\S_n$. This $\sigma$ must 
be the same for all points in a given irreducible component of $X^n$ by continuity. To elaborate, suppose we fix an irreducible component $Z\subset X^n$. For each $\sigma\in \S_n$ the locus $Z_{f,\sigma}$ of points in $Z$ where $f$ agrees with $\sigma$ is closed, since $X$ is separated whether it is affine or projective. However, the $Z_{f,\sigma}$ cover $Z$ but if all the $Z_{f,\sigma}$ are proper then a finite union of proper closed subsets 
cannot cover $Z$ since $Z$ is irreducible. Thus, $f|_Z = \sigma$ for some $\sigma\in\S_n$.
However, $f|_Z$ is $\S_n$-equivariant, implying that $\sigma\tau = \tau\sigma$ for all $\tau\in\S_n$. As the center of $\S_n$ is 
trivial for $n\ge 3$, $f|_Z$ is the identity. Since this holds for all irreducible components $Z$ of $X^n$, we conclude that 
$f$ is globally trivial. For $n=2$ the $\S_2$-equivariant automorphism $f(x_1, x_2) = (x_2, x_1)$ also induces 
the identity on $X^{[2]}$, so that the fibers of $\Aut(X^2)^{\S_2}\rightarrow \Aut(X^{[2]})$ have size 2 (since $n=2$, specifying the smoothable locus is superfluous). 

For a point $\mathcal{Z} = \sum_{i=1}^k a_ix_i$ in $X^{(n)}$ we say that $\mathcal{Z}$ has multiplicity $\lambda$ where $\lambda$ 
is the partition $(a_1,\ldots,a_k)$ with $a_1\ge\ldots\ge a_k$. We have the same notion for points on $X^n$ and $X^{[n]}$ by looking 
at the multiplicity of the image under the maps $X^n\rightarrow X^{(n)}, X^{[n]}\rightarrow X^{(n)}$. 
We now prove the following important fact about our induced automorphisms: 
\begin{proposition}\label{multiplicity_lemma}
    Suppose $f:X^n\rightarrow X^n$ is an $\S_n$-equivariant automorphism with $X$ as in Proposition \ref{induced_aut_lemma}.  
    The automorphisms $f, \bar{f}, \tilde{f}$ on $X^n, X^{(n)}, X_{sm}^{[n]}$ respectively all preserve multiplicities. 
\end{proposition}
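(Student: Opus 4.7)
The plan is to reduce everything to the statement for $f$ itself, and then to deduce that $f$ preserves multiplicity from an identification of stabilizers under the $\mathfrak{S}_n$-action. Since $\bar{f}$ is defined via the universal property of the quotient $X^n \to X^{(n)}$, and $\tilde{f}$ is defined (by the construction outlined in Proposition \ref{induced_aut_lemma}) as the lift of $\bar{f}$ to a blowup of $X^{(n)}$, we automatically have commutative squares with the projection $p: X^n \to X^{(n)}$ and the Hilbert-Chow morphism $\pi: X^{[n]}_{sm} \to X^{(n)}$. Since multiplicity on $X^n$ and on $X^{[n]}_{sm}$ is defined by passage to $X^{(n)}$, preservation of multiplicity for $f$ (or equivalently for $\bar{f}$) immediately gives the statements for the other two maps. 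So the heart of the matter is $f$.

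For $p = (x_1, \ldots, x_n) \in X^n$, I would observe that the multiplicity partition of $p$ is encoded exactly in the stabilizer subgroup $\mathrm{Stab}(p) \subseteq \mathfrak{S}_n$: two positions $i, j$ satisfy $x_i = x_j$ if and only if the transposition $(ij)$ lies in $\mathrm{Stab}(p)$, so $\mathrm{Stab}(p)$ determines the partition of $\{1, \ldots, n\}$ into equality classes of coordinates, whose block sizes are precisely the multiplicity partition. The key step is then to show $\mathrm{Stab}(f(p)) = \mathrm{Stab}(p)$. One containment is immediate from equivariance: if $\sigma \cdot p = p$ then $\sigma \cdot f(p) = f(\sigma \cdot p) = f(p)$, so $\mathrm{Stab}(p) \subseteq \mathrm{Stab}(f(p))$. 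Applying the same argument to the equivariant automorphism $f^{-1}$ at the point $f(p)$ yields the reverse inclusion $\mathrm{Stab}(f(p)) \subseteq \mathrm{Stab}(f^{-1}(f(p))) = \mathrm{Stab}(p)$.

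With $\mathrm{Stab}(p) = \mathrm{Stab}(f(p))$ established, the transposition criterion above gives that the equality pattern of coordinates of $f(p)$ coincides with that of $p$, hence $f$ preserves multiplicity. Passing to $\bar{f}$ via $p: X^n \to X^{(n)}$ and to $\tilde{f}$ via $\pi: X^{[n]}_{sm} \to X^{(n)}$ as above concludes the proof.

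I do not expect any serious obstacle here: the entire argument is essentially a group-theoretic reformulation of what it means to have the same multiplicity, and the only nontrivial input is that the construction in Proposition \ref{induced_aut_lemma} genuinely produces $\tilde{f}$ as a lift of $\bar{f}$ along the Hilbert-Chow morphism — which is automatic from the blowup description of $X^{[n]}_{sm}$ used there. The mild subtlety worth flagging is that one should use $f^{-1}$ rather than argue set-theoretically with orbits, since a priori $f$ need only send $\mathfrak{S}_n$-orbits into $\mathfrak{S}_n$-orbits; but because $f$ is a bijection and $f^{-1}$ is also equivariant, this is painless.
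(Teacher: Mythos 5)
Your argument is correct, and it takes a genuinely different route from the paper. The paper proves that $f$ preserves multiplicities by inducting up the refinement poset of partitions of $n$: the base case is the small diagonal (where equivariance forces all coordinate functions of $f$ to agree), and the inductive step uses equivariance to show the coordinates of $f(\vec{x})$ group up at least as coarsely as those of $\vec{x}$, then invokes invertibility together with the induction hypothesis to rule out any further collapsing. Your stabilizer argument replaces all of this with the single observation that a transposition $(ij)$ fixes a closed point $p=(x_1,\ldots,x_n)$ exactly when $x_i=x_j$, so that $\mathrm{Stab}(p)\subseteq\S_n$ records the full equality pattern of the coordinates; equivariance of $f$ gives $\mathrm{Stab}(p)\subseteq\mathrm{Stab}(f(p))$, and equivariance of $f^{-1}$ gives the reverse inclusion. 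This is shorter, avoids the induction and the closure-of-strata digression entirely, and in fact yields the slightly stronger conclusion that the partition of index positions into equality classes (not merely the multiset of block sizes) is preserved. Both proofs use invertibility in an essential way, consistent with the paper's remark that equivariant non-invertible endomorphisms (e.g.\ constant maps) need not preserve multiplicities. The one point to keep in mind is that, like the paper's own proof, your argument is phrased at the level of (closed, geometric) points, which is exactly where multiplicity is defined, so this is not a defect; and your reduction of the statements for $\bar{f}$ and $\tilde{f}$ to the statement for $f$ via the projection and the Hilbert--Chow morphism is the same as the paper's.
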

\begin{proof}
    By this we mean that if $\mathcal{Z}$ is an element of either $X^n, X^{(n)}, X_{sm}^{[n]}$ with multiplicity $\lambda$ then 
    $f(\mathcal{Z})$ also has multiplicity $\lambda$. 
    It is immediate by construction that the map $f$ on $X^n$ preserves multiplicities if and only if $\bar{f}$ does on $X^{(n)}$, 
    and since the Hilbert-Chow morphism $\pi:X^{[n]}\rightarrow X^{(n)}$ preserves multiplicities the same equivalence holds between 
    $\bar{f}$ and $\tilde{f}$. Thus, it suffices to prove that any $\S_n$-equivariant automorphism $f:X^n\rightarrow X^n$ preserves 
    multiplicities.
    
    Since the $\S_n$-equivariant automorphisms of $X^n$ are in correspondence with the automorphisms of $X^{(n)}$, this is just a 
    general statement about automorphisms of $X^{(n)}$. For a partition $\lambda = (\lambda_1,...,\lambda_k), \lambda_1\ge\ldots\ge \lambda_k$ of $n$, we write $X^{(n)}_{\lambda}$ for the locus of points in $X^{(n)}$ of multiplicity type $\lambda$. 
    We may also write $X^n_{\lambda}$ for the preimage of $X^{(n)}_{\lambda}$ under the projection. Given 
    two partitions $\lambda=(\lambda_1,\ldots,\lambda_k), \tau=(\tau_1,\ldots,\tau_l)$ we say that $\lambda$ \textit{refines} $\tau$ 
    if $k\ge l$ and there exists a partition $\{1,...,k\} = I_1\cup\ldots\cup I_m$ of the index set of $\lambda$ such that for each 
    $j$, $\sum_{s\in I_j} \lambda_s = \tau_j$. For example, $(2,2,1,1)$ and $(3,1,1,1)$ both refine $(3,3)$ since we may group up 
    elements of these partitions and add them up so as to get $(3,3)$. By contrast, $(4,2)$ and $(3,3)$ do not refine each other. 
    This notion endows the set of partitions of $n$ 
    with the structure of a poset, so that we will write $\lambda\ge\tau$ if $\lambda$ refines $\tau$. 
    
    We may now prove the proposition by inducting up the poset of partitions of $n$ (strictly speaking, we are inducting with 
    respect to a topological sort of this finite poset). To start the induction, we need to prove that 
    $f$ fixes $X_{(n)}^n$ since any partition refines $(n)$ (and this is the only element of the poset for which this is true, so 
    there is a unique base case). We write out $f$ into coordinate 
    functions $f_1,...,f_n:X^n\rightarrow X$, so that at a point of multiplicity $(n)$ we may write $f(x,\ldots, x) = (f_1(x,\ldots,x),
    \ldots, f_n(x,\ldots,x))$. Any permutation of the input $(x,\ldots,x)$ to $f$ does nothing to the left hand side, but will 
    permute the factors $f_i(x,\ldots,x)$ on the right hand side. Since this applies to any permutation, we conclude that 
    $f_i(x,\ldots,x)=f_j(x,\ldots,x)$ for any $i,j$, so that $f(x,\ldots,x)$ is also a point of multiplicity $(n)$. 
    
    Suppose now that $f$ preserves $X_{\tau}^n$ for any $\tau\lneq\lambda$, with $\lambda = (\lambda_1,...,\lambda_k)$. We may 
    without loss of generality consider a point $\vec{x} = (x_1,\ldots,x_1,x_2,\ldots,x_2,\ldots,x_k,\ldots,x_k)\in X^n$ with     multiplicity $\lambda$ (i.e. we assume the coordinates are grouped up). We have $f(\vec{x}) = (f_1(\vec{x}),\ldots,f_n(\vec{x}))$ 
    as before. Using the $\S_n$-equivariance of $f$, we see that the input $\vec{x}$ is unaffected if we permute the first $\lambda_1$ 
    entries, so that the first $\lambda_1$ of the $f_i(\vec{x})$ on the right hand side are all equal. The same is true 
    for the next $\lambda_2$ entries, and so on. To show that the right hand side $(f_1(\vec{x}),\ldots,f_n(\vec{x}))$ has 
    the desired multiplicity $\lambda$, we need to show that the coordinates in different groups corresponding to different $\lambda_i$ 
    are distinct. If not, then necessarily two groups corresponding to different $\lambda_i$ are all equal, so that the right hand 
    side is a point with multiplicity $\tau$ refined by $\lambda$. 
    However, we know that $f$ restricts to an automorphism of $X_{\tau}^n$ by induction, so by invertibility it cannot send a 
    point of $X_{\lambda}^n$ to a point of $X_{\tau}^n$. Thus, we are done. 
    
    As an aside, we require that $f:X^n\rightarrow X^n$ be an automorphism as there exist $\S_n$-equivariant morphisms on $X^n$ 
    which do not preserve multiplicities. As an example, fix a point $p\in X$ - the constant map $f(x_1,...,x_n) = (p,...,p)$ 
    is equivariant but sends any point to a point of multiplicity $(n)$. 
    
    We remark additionally that if $\lambda\ge\tau$ then the closure $\overline{X_{\lambda}^n}$ contains $X_{\tau}^n$. This is quite 
    intuitive. As an example, in the case where $(3,1,1,1)$ refines $(3,3)$, if we have three distinct points of multiplicity 1 
    in $X$ then we can let them run into each other, so that any point of multiplicity type $(3,3)$ is the limit of points of 
    type $(3,1,1,1)$. One can be more formal as an exercise in point-set topology, which we now sketch. 
    On $X^n$, by taking complements we wish to prove that 
    any open set $U$ contained in $(X^n_{\lambda})^c$ is in fact contained in $(X^n_{\tau})^c$. Any such open set $U$ may be written 
    as a union of product open sets, so it suffices to show that if a product open set $U_1\times\ldots\times U_n\subset X^n$ contains 
    a point of multiplicity $\tau$ then it contains a point of multiplicity $\lambda$. Since $X$ is projective over an infinite 
    field these open sets $U_i$ have infinitely many points, so given our point of multiplicity $\tau\in U_1\times\ldots\times U_n$ 
    we can perturb the coordinates to get a point of multiplicity $\lambda$. 
\end{proof}
Note that preserving the multiplicity structure of points in $X^{[n]}$ is a stronger statement than simply fixing the exceptional divisor 
of non-reduced points in $X^{[n]}$.
Thus, to produce a counterexample to the question of \cite{BOR} it suffices to find an 
$\S_n$-equivariant automorphism $f:X^n \rightarrow X^n$ which does not arise from the diagonal embedding $\Aut(X)\hookrightarrow\Aut(X^n)^{\S_n}$. 
For $X = A$ an abelian variety, a group endomorphism $f:A^n \rightarrow A^n$ may be decomposed as a matrix of group endomorphisms of $A$: 
\[
    f = \begin{bmatrix}
        f_{11} & ... & f_{1n} \\
        &...& \\
        f_{n1} & ... & f_{nn}
        \end{bmatrix}
\]
where $f(x_1,...,x_n) = (\sum_{j=1}^n f_{1j}(x_j),...,\sum_{j=1}^nf_{nj}(x_j))$. One can check that this identification transforms composition of morphisms into matrix multiplication using the ring structure of $\End(A)$. For such a matrix 
to be $\S_n$-equivariant it must have all diagonal entries equal and all off-diagonal entries equal, and certainly such a matrix 
is indeed $\S_n$-equivariant. Thus, to produce $\S_n$-equivariant automorphisms of $A^n$ we need only find entries $x,y\in\End(A)$ such 
that the determinant of the matrix 
\[
\begin{bmatrix}
x & y & y & ... & y\\
y & x & y & ... & y\\
&&&...& \\
y & y & y & ... & x
\end{bmatrix}
\]
is a unit in the ring $\End(A)$, as then the matrix (and hence the endomorphism) has an inverse given by the adjoint matrix.
If $y\neq 0$, then the automorphism of $A^n$ does not arise from the diagonal embedding of $\Aut(A)$ (these are automorphisms as a variety) and so the corresponding 
automorphism of $A^{[n]}$ will give a desired counterexample. 
\newline\newline 
We now give some examples of such matrix solutions. 
\subsection{$\dim A = 2, n = 2$ via Pell's equation}\hfill\\
These examples were first published by Sasaki \cite{sasaki2023nonnatural} and independently discovered by the author. 
Let $d\ge 2$ be a positive integer which is not a perfect square. The Pell's equation 
\[
x^2 - dy^2 = 1
\]
has infinitely many integer solutions $(x,y)$. One can construct 
complex abelian surfaces whose endomorphisms are precisely the ring of integers $\O_K$ of a totally real number field - see \cite[Chapter 2.2]{goren2002lectures} and the discussion after equation (2.46).
Let $A$ be a complex abelian surface with $\End(A) = \O_K$ where $K = \Q(\sqrt{d})$, so that in particular $A$ has an endomorphism 
$\sqrt{d}$ such that the self-composition $\sqrt{d}\circ\sqrt{d}$ coincides with the endomorphism $d\in\Z\subset\End(A)$.

Fix an integer solution $(x,y)$ to the Pell's equation $x^2 - dy^2 = 1$ with $x,y\neq 0$. The matrix 
\[
    M = 
    \begin{bmatrix}
    x & y\sqrt{d} \\
    y\sqrt{d} & x
    \end{bmatrix}
    \in \End(A^2)
\]
has determinant $x^2 - dy^2 = 1$ as an element in $\End(A)$, and so defines an automorphism of $A^2$. Since the entries on the 
diagonal and off-diagonal are separately equal to some fixed value, the automorphism is $\S_2$-equivariant, and since the off-diagonal 
entry is nonzero it is not in the image of $\Aut(A)\hookrightarrow \Aut(A^2)^{\S_2}$. By the preceding discussion, we obtain an 
automorphism of $A^{[2]}$ which is not natural, but which nevertheless preserves the diagonal. 

The Picard rank of $A$ is 2, since $\O_K$ is a $\Z$-module of rank 2 so that the Picard rank is at most 2, 
but if it were 1 then we could write every element of $\NS(A)$ as an integer multiple of some generator $\Theta$, so that 
$(\sqrt{d})^*\Theta = k\Theta$ for some integer $k$ and hence $d\Theta = k^2\Theta$, contradicting that $d$ is not a perfect square.
Alternatively, simply invoke \cite[Lemma 1.2]{yoshioka2023}.

\subsection{$\dim A \ge 2, n \ge 2$ via nilpotent endomorphisms}\label{nilpotent_end_counterex}\hfill\\
Suppose $B = A^m$ for $A$ an abelian variety. Any endomorphism $N$ of $B$ whose $m\times m$ matrix is upper triangular with all zeros on the diagonal 
is nilpotent by linear algebra, so that $N^k = 0$ for some $k$. Suppose $N\neq 0$ so that $2\le k\le m$. The determinant 
of the $n\times n$ matrix 
\[
\begin{bmatrix}
    I & N & N & ... & N \\
    N & I & N & ... & N \\
    N & N & I & ... & N \\
    &&&...& \\
    N & N & N & ... & I    
\end{bmatrix}
\]
is $I + N^2T \in \End(B) = \End(A^m)$ where $T$ is some polynomial in $N, I$. To see this, expand the determinant along the first row. The first 
term of this expansion will be $I$ times the determinant of the $(n-1)\times (n-1)$ version of this matrix, and the subsequent terms 
will be $\pm N$ times the determinant of a matrix whose first column is all $N$'s. We thus inductively get either $I$ or terms 
divisible by $N^2$, as desired. 

The term $N^2T$ is upper triangular 
with zeros on the diagonal, so that $I + N^2T$ is an $m\times m$ upper triangular matrix of the form 
\[
I + N^2T = 
\begin{bmatrix}
1 & * & \ldots &* \\
0 & 1 & \ldots &* \\
&&\ldots& \vdots \\
0 & 0 & \ldots &1
\end{bmatrix}
\]
with $1$'s on the main diagonal. This matrix has determinant $1\in\End(A)$ by expanding along the first column successively, 
so that $I+N^2T$ is an invertible element of $\End(B)$ and hence our original $\S_n$-equivariant endomorphism on $B^n$ defined 
by the first matrix is 
an automorphism, and thus induces an unnatural automorphism preserving the exceptional divisor on the smoothable locus of $B^{[n]}$. For 
$B = E\times E$ a product abelian surface where $E$ is an elliptic curve we obtain counterexamples to the question of Belmans-Oberdieck-Rennemo for all $n$. Note that these product abelian varieties have Picard rank greater than 1 coming from pullbacks from the 
individual factors and from maps between the factors. 
\subsection{$\dim A \ge 3, n = 3$ and beyond using higher degree number fields}\label{higher_deg_counterex}\hfill\\
By now the pattern is clear for how to construct counterexamples using abelian varieties - we have to find an abelian variety $A$ 
such that the determinantal equation 
\[
    \det
    \begin{bmatrix}
    x & y & y & ... & y \\
    y & x & y & ... & y \\
    y & y & x & ... & y \\
    &&...&&\\
    y & y & ... & y & x 
    \end{bmatrix}
    \in \End(A)^{\times}
\]
admits solutions over $\End(A)$ with $y\neq 0$. As previously discussed, by \cite[Chapter 2.2]{goren2002lectures} we can try to look for solutions 
in rings of integers of totally real fields. This is not the only possibility, as many other rings can arise as the endomorphisms of abelian varieties.

For one example, take $n=3$, so that the determinant above becomes $x^3 - 3xy^2 + 2y^3$. View $y$ as a fixed coefficient, and set 
the determinant equal to 1, so that we wish to find solutions in $x$ to 
\[
    x^3 - (3y^2)x + (2y^3 - 1) = 0.
\]
The discriminant of this cubic in $x$ is 
\[
    \Delta = - (4(-3y^2)^3 + 27(2y^3 - 1)^2) = 108y^3 - 27.
\]
If $y$ is a positive integer then the discriminant is positive, so that the equation has three distinct real roots. Picking one 
of these solutions $x = \alpha$ gives a degree 3 totally real field $K = \Q(\alpha)$, and hence we may find an abelian variety $A$ admitting multiplication by $\O_K$ (one can take $A$ to be a threefold). The resulting endomorphism on $A^3$ given by the matrix 
\[
    M = 
    \begin{bmatrix}
    \alpha & y & y \\
    y & \alpha & y \\
    y & y & \alpha
    \end{bmatrix}
\]
is therefore invertible, and so we obtain an induced unnatural automorphism on $A^{[3]}$ fixing the exceptional divisor. For a smooth variety $X$ 
it is known that $X^{[3]}$ is also still smooth, and therefore equal to its smoothable component. Whether more solutions 
of these determinantal equations exist for other values of $n$ and with other endomorphism rings of abelian varieties is an 
interesting question to which we do not know the answer.

We can however rule out nontrivial solutions over the integers: 
\begin{proposition}\label{no_mat_int_sol}
Let $M_n$ be the $n\times n$ matrix 
\[
M_n = 
\begin{bmatrix}
    x & y & y & \ldots & y \\
    y & x & y & \ldots & y \\
    &&\ldots&&\\
    y & y & \ldots & y & x 
    \end{bmatrix}.
\]
There are no pairs $(x,y)$ of integers with $y\neq 0$ such that $\det(M_n) = \pm 1$ for $n\ge 3$. 
\end{proposition}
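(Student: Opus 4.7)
The plan is to compute the determinant of $M_n$ in closed form, then use the constraint $\det(M_n) = \pm 1$ to force $y = 0$ by a simple divisibility argument.

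First I would observe that $M_n = (x-y)I_n + yJ_n$ where $J_n$ is the all-ones matrix. Since $J_n$ has eigenvalue $n$ (with eigenvector $(1,\ldots,1)^T$) and eigenvalue $0$ with multiplicity $n-1$, the eigenvalues of $M_n$ are $x + (n-1)y$ (simple) and $x - y$ (with multiplicity $n-1$). Taking the product of eigenvalues gives
\[
\det(M_n) = (x + (n-1)y)(x-y)^{n-1}.
\]

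Next, suppose $(x,y)$ is an integer pair with $\det(M_n) = \pm 1$. Since $x-y$ and $x+(n-1)y$ are both integers and their product (after raising $x-y$ to the $(n-1)$-th power) is a unit in $\Z$, each factor must itself be a unit, i.e.\ $x - y = \pm 1$ and $x + (n-1)y = \pm 1$. Subtracting these two equations yields
\[
(x+(n-1)y) - (x-y) = ny \in \{-2, 0, 2\}.
\]
For $n \ge 3$ the only multiple of $n$ in $\{-2,0,2\}$ is $0$, which forces $y = 0$, contradicting the assumption $y \neq 0$.

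There is no real obstacle here; the entire argument reduces to a one-line eigenvalue computation followed by a pigeonhole-style divisibility check, and the hypothesis $n \ge 3$ enters in exactly the right place to kill the $\pm 2$ options that would otherwise survive (indeed, for $n=2$ one gets $(x,y) = (0,\mp 1)$ as the Pell-type solutions hinted at in Subsection~3.1). The argument is clean enough that the proof can be written in a few lines with no case analysis beyond this final divisibility step.
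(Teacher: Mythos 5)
Your proof is correct, and the endgame is identical to the paper's: both arrive at the factorization $\det(M_n) = (x-y)^{n-1}\bigl(x+(n-1)y\bigr)$, observe that a product of integers equal to $\pm 1$ forces each factor to be $\pm 1$, and then note that $ny = \bigl(x+(n-1)y\bigr) - (x-y) \in \{-2,0,2\}$ kills $y\neq 0$ once $n\ge 3$. The only real difference is how the determinant formula is obtained. The paper introduces an auxiliary matrix $T_n$ (all $y$'s in the first row and column, with $M_{n-1}$ in the remaining minor), expands along the top row, and runs a joint induction to get $\det(M_n)=(x-y)^{n-1}(x+(n-1)y)$ and $\det(T_n)=y(x-y)^{n-1}$. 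You instead write $M_n = (x-y)I_n + yJ_n$ and read off the eigenvalues from the spectrum of the all-ones matrix $J_n$, which is shorter and avoids the induction entirely; the paper's route has the minor side benefit of also producing the formula for $\det(T_n)$, though that extra formula is not needed for this proposition. Your closing parenthetical about $n=2$ is a nice sanity check, though the Pell-type solutions in Subsection 3.1 live in $\O_K$ rather than $\Z$, so the analogy is loose; this does not affect the proof.
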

\begin{proof}
The key is to show how to factor this determinant for all $n$. To do this, set $T_n$ to be the matrix 
\[
T_n = 
\begin{bmatrix}
y & y & y & \ldots & y \\
y & x & y & \ldots & y \\
y & y & x & \ldots & y \\
&&&\ldots&\\
y & y & y & \ldots & x
\end{bmatrix}
\]
which consists of all $y$'s in the first row and column and a copy of $M_{n-1}$ in the bottom right hand minor. 
By expanding determinants along the top row and using row operations to transform the resulting minors into copies of $M_{n-1}, T_{n-1}$, a straightforward induction shows that
\begin{formula}\label{Mn_det_formula}
\begin{align*}
    \det(M_n) &= (x-y)^{n-1}(x + (n-1)y) \\
    \det(T_n) &= y(x-y)^{n-1}.
\end{align*}
\end{formula}

We may now prove the proposition. Suppose that $\det(M_n) = (x-y)^{n-1}(x + (n-1)y) = \pm 1$ for $x,y$ integers. It must be then that either $x-y = 1$ or $x-y = -1$. If $x-y = 1$ then $x + (n-1)y = ny + 1$ must be equal to $\pm 1$, so that $ny = 0, -2$, which can't happen if $n\ge 3$ unless $y=0$. Similarly, if $x-y = -1$ then 
$x + (n-1)y = ny - 1 = \pm 1$, implying $ny = 0, 2$, which can't happen unless $y=0$. We conclude the proposition. 
\end{proof}
This gives some evidence that automorphisms $A^{[n]}$ should be natural for all $n$ if $A$ is an abelian surface of Picard rank 1. However, it is not a proof, as it is not immediate that all automorphisms of $A^{[n]}$ arise from $A^{(n)}$. 
\subsection{Action on generalized Kummer fibers}
Suppose $(a_1,...,a_n)\in A^n$ is a vector such that $a_1 + ... + a_n = 0$. Any matrix $M_n$ defining an automorphism on $A^n$ with 
$x,y\in\End(A)$ as before acts on this point as 
\[
    M_n(a_1,\ldots,a_n) = 
    \begin{bmatrix}
    x & y & y & \ldots & y \\
    y & x & y & \ldots & y \\
    &&&\ldots&\\
    y & y & \ldots & y & x 
    \end{bmatrix}
    \begin{bmatrix}
    a_1 \\
    a_2 \\
    \vdots \\
    a_n
    \end{bmatrix}
    = 
    \begin{bmatrix}
    x(a_1) + y(a_2 + \ldots + a_n) \\
    x(a_2) + y(a_1 + a_3 + \ldots + a_n) \\
    \vdots \\
    x(a_n) + y(a_1 + \ldots + a_{n-1}) 
    \end{bmatrix}
    =
    \begin{bmatrix}
    (x-y)(a_1) \\
    (x-y)(a_2) \\
    \vdots \\
    (x-y)(a_n)
    \end{bmatrix}
\]
since $a_i = -(a_1 + \ldots + a_{i-1} + a_{i+1} + \ldots + a_n)$ by hypothesis. Thus, the 
induced automorphism $\tilde{f}$ of the smoothable locus of $A^{[n]}$ restricts to an automorphism of the fiber over $0$ of the summation morphism 
$\Sigma: A^{[n]}\rightarrow A$ preserving the exceptional divisor. \cite[Theorem 3.1]{boissiere2011higher} implies for an abelian surface 
that such an automorphism of the generalized Kummer variety $K_{n-1}(A)\subset A^{[n]}$ must be the restriction of a natural 
automorphism $(t_a\circ g)^{[n]}$ on $A^{[n]}$, where $g$ is a group automorphism of $A$ and $t_a$ is translation by an $n$-torsion point 
if $n\ge 3$. 

However, there is no contradiction. Though 
our $\tilde{f}$ on $A^{[n]}$ is globally not a natural automorphism, its restriction to $K_{n-1}(A)$ agrees with the restriction 
of the natural endomorphism $(x-y)^{[n]}$ per our calculation. We just need to show that $x-y$ is in fact an automorphism. By formula \ref{Mn_det_formula} we know that 
\[
    \det(M_n) = (x-y)^{n-1}(x + (n-1)y)
\]
is a unit in the ring $\End(A)$. 

It is not true for arbitrary rings 
that if the product of two elements is a unit then those elements must have been units. However, it is true for $\End(A)$, since 
if $g\circ h = \id$ in $\End(A)$ then we may lift this to a product of linear maps on some complex vector space, and those 
linear maps must be invertible by taking determinants. Thus, $x-y$ is an automorphism of $A$, so $(x-y)^{[n]}$ is a natural automorphism whose action on $K_{n-1}(A)$ agrees with that of $\tilde{f}$. 

\section{Numerical calculations}\label{numerical_calc_section}
\noindent
In this section, let $A$ be a complex abelian surface of Picard rank 1, whose Neron-Severi group is generated by a polarization $\Theta$ such that $\Theta^2 = 2k$ for some positive integer $k$. 
To prove Theorem \ref{main_theorem}, we will need to calculate various numerical invariants on $A^{[2]}$. The reader who is only interested in 
the main theorem may immediately begin with the proof in the next section and refer back to these results as they are used. 
\subsection{Intersection numbers on $A^{[2]}$}\label{intersection_number_subsection}\hfill\\
We have that 
\[
\NS(A^{[2]}) \cong \Z^3,
\]
with generators $\Theta_{[2]}, \phi_{\Theta}, B$. Formula \ref{sum_pullback} allows us to change our basis to have generators 
\[
\Theta_{[2]}, \Sigma^*\Theta, B
\]
instead. We will calculate all top intersection numbers of these divisors, and for simplicity will write 
\begin{align*}
    x &= \Theta_{[2]} \\
    y &= \Sigma^*\Theta
\end{align*}
so that $\NS(A^{[2]})$ has free generators $x, y, B$. 

Consider the following commutative diagram:
\begin{center}
\begin{tikzcd}
Bl_{\Delta} A^2 \arrow[d, "q"'] \arrow[r, "\hat{\pi}"] & A^2 \arrow[d, "p"] \\
{A^{[2]}} \arrow[r, "\pi"]                     & A^{(2)}           
\end{tikzcd}
\end{center}
where all maps are the obvious ones.
Note that $q, p$ are generically of degree two and $\hat{\pi}, \pi$ are generically of degree 1 with connected fibers. 
We will lift our calculations of intersection 
numbers in $A^{[2]}$ upstairs to $Bl_{\Delta}A^2$. We will respectively denote the exceptional divisors of $A^{[2]}, Bl_{\Delta}A^2$ 
as $E, F$. The normal bundle to the diagonal in the 
self-product of any variety $X$ is well-known to be isomorphic to the tangent bundle $T_X$ (the diagonal $\Delta \subset X^2$ is 
itself isomorphic to $X$), which for $X = A$ an abelian surface is just a trivial rank two bundle $T_A \cong \O_A^{\oplus 2}$. 
Thus, the exceptional divisor $F = \P(N_{\Delta / A^2})$ is simply isomorphic to $A\times \P^1$, with normal bundle in 
$Bl_{\Delta}A^2$ simply given by the pullback of $\O_{\P^1}(-1)$ along the projection $F = A\times \P^1 \rightarrow \P^1$. 
Since $F = q^{-1}(E)$ set-theoretically and $q$ has degree two, we have that $q^*E = 2F$ in Picard. 

Suppose we wish to calculate an intersection number on $A^{[2]}$ of the form 
\[
\int_{A^{[2]}}E^m\cdot \pi^*\alpha
\]
where $m > 0$ and $\alpha$ is a cycle on $A^{(2)}$ of complementary codimension. Since $q^*E = 2F$ and $q$ is generically of degree 2, 
we may instead calculate this as 
\begin{align*}
\int_{A^{[2]}}E^m\cdot \pi^*\alpha &= \frac{1}{2}\int_{Bl_{\Delta}A^2} q^*(E^m\cdot \pi^*\alpha) \\
                    &= \frac{1}{2}\int_{Bl_{\Delta} A^2} (q^*E)^m \cdot \hat{\pi}^*p^*\alpha \\
                    &= 2^{m-1} \int_{Bl_{\Delta} A^2} F^m \cdot \hat{\pi}^*p^*\alpha.
\end{align*}
Consider then the following blowup diagram:
\begin{center}
\begin{tikzcd}
F \cong A\times\P^1 \arrow[r, "j", hook] \arrow[d, "p_1"'] & Bl_{\Delta}A^2 \arrow[d, "\hat{\pi}"] \\
\Delta \cong A \arrow[r, "i", hook]            & A^2                          
\end{tikzcd}.
\end{center}
We note first that $[F] = j_*1$ in the Chow ring $A^*(Bl_{\Delta}A^2)$ and that $j^*F = -H$, where $H$ is the first Chern class of the line bundle $\O_{\P^1}(1)$ on the $\P^1$ factor of $F \cong A \times \P^1$ by standard intersection theory.

By using these facts and the push-pull formula we find for any $m\ge 1$ that as classes in $A^*(Bl_{\Delta}A^2)$:
\begin{align*}
    F^m &= F^{m-1}\cdot j_*1 \\
    &= j_*(j^*F^{m-1}\cdot 1) \\
    &= (-1)^{m-1}j_*(H^{m-1}).
\end{align*}
Thus, for any $m\ge 1$ and $\beta\in A^*(A^2)$ we find that 
\begin{align*}
    F^m \cdot \hat{\pi}^*\beta &= (-1)^{m-1}j_*(H^{m-1})\cdot \hat{\pi}^*\beta \\
        &= (-1)^{m-1}j_*(H^{m-1}\cdot j^*\hat{\pi}^*\beta) \\
        &= (-1)^{m-1}j_*(H^{m-1}\cdot p_1^*i^*\beta). 
\end{align*}
For this last quantity to give a nonzero codimension 4 cycle, we see that we must have $m=2$ and $\codim\beta = 2$. Moreover, since 
$H$ is just a point in each $\P^1$ fiber of $F\cong A\times\P^1$ and the map $p_1:F\rightarrow \Delta\cong A$ is just projection onto the $A$ factor, 
\[
\int_{Bl_{\Delta}A^2} j_*(H^{m-1}\cdot p_1^*i^*\beta)
\]
simply counts the number of points in the intersection $\int_{\Delta}i^*\beta = \int_{A^2}\beta\cdot\Delta$ in $A^{2}$ when $m=2$.

Returning to our initial calculation of $E^m\cdot \pi^*\alpha$ in $A^{[2]}$, by setting $\beta = p^*\alpha$ we find that 
\begin{align*}
\int_{A^{[2]}}E^m\cdot \pi^*\alpha &= 2^{m-1}\int_{Bl_{\Delta} A^2} F^m\cdot \hat{\pi}^*p^*\alpha \\
                    &= (-2)^{m-1}\int_{Bl_{\Delta} A^2} j_*(H^{m-1}\cdot q^*i^*\beta) \\
                    &= -2\int_{A^2} \beta\cdot\Delta
\end{align*}
if $m=2$ to make this intersection number potentially nonzero. 

If instead $m=0$ so that we are calculating the intersection number on $A^{[2]}$ of divisors purely coming from $A^{(2)}$, then since 
$p:A^2\rightarrow A^{(2)}$ is generically of degree 2 and $\pi:A^{[2]}\rightarrow A^{(2)}$ is generically of degree 1 we find that 
\begin{align*}
\int_{A^{[2]}}\pi^*\beta &= \int_{A^{(2)}}\beta \\
                        &= \frac{1}{2}\int_{A^2}p^*\beta,
\end{align*}
allowing us to calculate any top intersection number of divisors on $A^{[2]}$ on $A^2$ instead.  

With our basis $x, y, B$ for $\NS(A^{[2]})$, when lifting to $A^2$ we have that $x$ lifts to $\pi_1^*\Theta + \pi_2^*\Theta$ where 
the $\pi_i$ are the natural projections. 
By our preceding work calculating intersection numbers on $A^{[2]}$, we know 
that any monomial of the form \[\int_{A^{[2]}}x^a y^b E^c\] can only be nonzero if $c = 0, 2$. We now calculate the intersection 
numbers of all these monomials. 
Recall that $\Theta^2 = 2k$, and write $p$ for the class of a point. 

We first calculate 
\begin{align*}
    \int_{A^{[2]}}x^2y^2 &= \frac{1}{2}\int_{A^2}(\pi_1^*\Theta + \pi_2^*\Theta)^2(\Sigma^*\Theta)^2 \\
                    &= 4k^2\int_{A^2}\pi_1^*(p)\Sigma^*(p) + 2k\int_{A^2}(\pi_1^*\Theta)(\pi_2^*\Theta)\Sigma^*(p)
\end{align*}
where we use the symmetry $\pi_1^*(\Theta^2)\Sigma^*(\Theta^2) = \pi_2^*(\Theta^2)\Sigma^*(\Theta^2)$.
Thinking set-theoretically, $\pi_1^*(p)$ simply fixes the first coordinate of a point $(x,y)$ in $A^2$, and $\Sigma^*(p)$ fixes the 
sum of the point, which together determines the second coordinate of $(x,y)$ so that 
\[
\int_{A^2}\pi_1^*(p)\Sigma^*(p) = 1. 
\]
To calculate the second term above, if we fix a representative for $\Theta$ then $(\pi_1^*\Theta)(\pi_2^*\Theta)\Sigma^*(p)$ consists 
of those points $(x,y)$ where $x,y\in\Theta$ and $x + y = p$ for some fixed point $p$. This is equivalent to counting the number of 
points $x\in A$ such that both $x, p-x$ are in $\Theta$. This is simply the self intersection of $\Theta$ with the pullback of $\Theta$ 
under a translation and $(-1)^*$, neither of which change the numerical equivalence class of $\Theta$, giving 
\[
\int_{A^2} (\pi_1^*\Theta)(\pi_2^*\Theta)\Sigma^*(p) = \int_A \Theta^2 = 2k.
\]
Hence, 
\begin{align*}
\int_{A^{[2]}}x^2y^2 &= 4k^2\int_{A^2}\pi_1^*(p)\Sigma^*(p) + 2k\int_{A^2}(\pi_1^*\Theta)(\pi_2^*\Theta)\Sigma^*(p) \\
                    &= 8k^2.
\end{align*}
Any terms involving $(\pi_i^*\Theta)^m = \pi_i^*(\Theta^m)$ and $y^m = (\Sigma^*\Theta)^m = \Sigma^*(\Theta^m)$ vanish if $m > 2$, and thus 
\begin{align*}
\int_{A^{[2]}} x^3y &= \frac{1}{2}\int_{A^2} (\pi_1^*\Theta + \pi_2^*\Theta)^3\Sigma^*(\Theta) \\ 
                &= 6k\int_{A^2} \pi_1^*(p)(\pi_2^*\Theta)\Sigma^*(\Theta).
\end{align*}
The class $\pi_1^*(p)(\pi_2^*\Theta)\Sigma^*(\Theta)$ corresponds to points $(x,y)$ such that both $y, p+y$ are in $\Theta$ for some fixed point $p = x$, which amounts to the intersection of $\Theta$ with the translation of $\Theta$ by $p$ which doesn't change numerical equivalence, and hence has value 
\[
\int_{A^2}\pi_1^*(p)(\pi_2^*\Theta)\Sigma^*(\Theta) = \int_A \Theta^2 = 2k, 
\]
so that 
\begin{align*}
\int_{A^{[2]}} x^3y &= 6k\int_{A^2} \pi_1^*(p)(\pi_2^*\Theta)\Sigma^*(\Theta) \\
    &= 12k^2. 
\end{align*}
Finally, 
\begin{align*}
\int_{A^{[2]}} x^4 &= \frac{1}{2}\int_{A^2}(\pi_1^*\Theta + \pi_2^*\Theta)^4 \\
                    &= 12k^2 \int_{A^2}\pi_1^*(p)\pi_2^*(p) \\
                    &= 12k^2.
\end{align*}

These are all the classes involving no $E$ term that we need to calculate, since any monomial with $y^m$ for $m > 2$ will be zero. 
Using our formula for intersection numbers of the form $E^m\cdot \pi^*\alpha$, we find that 
\begin{align*}
\int_{A^{[2]}} x^2E^2 &= -2\int_{A^2}(\pi_1^*\Theta + \pi_2^*\Theta)^2\cdot\Delta \\
                    &= -8k\int_{A^2}\pi_1(p)\cdot\Delta - 4\int_{A^2}\pi_1^*(\Theta)\pi_2^*(\Theta)\cdot\Delta.
\end{align*}
The first term $\pi_1^*(p)\cdot \Delta$ counts points $(x,y)$ with $x=p$ fixed and $y=x$ giving just one point, and for 
$\pi_1^*(\Theta)\pi_2^*(\Theta)\cdot\Delta$ we consider the commutative diagram(s)
\begin{center}
\begin{tikzcd}
A\cong\Delta \arrow[rd, "{(x,x)\mapsto x}"'] \arrow[r, "i", hook] & A\times A \arrow[d, "{\pi_1, \pi_2}"] \\
                                                                  & A                                    
\end{tikzcd}
\end{center}
so that 
\begin{align*}
    \int_{A^2}\pi_1^*(\Theta)\pi_2^*(\Theta)\cdot\Delta &= \int_A \Theta^2 \\
                                                        &= 2k
\end{align*}
by push-pull with $i$. We thus conclude that 
\begin{align*}
\int_{A^{[2]}}x^2E^2 &=  -8k\int_{A^2}\pi_1(p)\cdot\Delta - 4\int_{A^2}\pi_1^*(\Theta)\pi_2^*(\Theta)\cdot\Delta \\
                &= -16k.
\end{align*}
Similarly, 
\begin{align*}
\int_{A^{[2]}}y^2E^2 &= -2\int_{A^2}(\Sigma^*\Theta)^2\cdot\Delta \\
                    &= -4k\int_{A^2}\Sigma^*(p)\cdot\Delta \\
                    &= -64k.
\end{align*}
We use that if we take $\Sigma^*(p)$ to be represented by the fiber over 0 then the integral
\[
\int_{A^2}\Sigma^*(p)\cdot\Delta
\]
counts the points $(x,y)$ with $y = -x$ and 
$x = y$, corresponding to the 16 2-torsion points of $A$. 

Finally, we calculate the last relevant term
\begin{align*}
\int_{A^{[2]}}xyE^2 &= -2\int_{A^2} (\pi_1^*\Theta + \pi_2^*\Theta)(\Sigma^*\Theta)\cdot\Delta \\
                &= -4\int_{A^2} \pi_1^*(\Theta)\Sigma^*(\Theta)\cdot \Delta.
\end{align*}
This integral counts points $(x,y)$ with $x\in\Theta, x+y\in\Theta,$ and $x=y$, i.e. points $x\in A$ such that $x, 2x\in\Theta$. 
This amounts to calculating the intersection $\Theta\cdot (\cdot 2)^*\Theta$ where $(\cdot 2)^*$ is the pullback under the 
multiplication by 2 map on $A$. We can pick $\Theta$ to be symmetric (we only care about numerical equivalence), 
so that from the formula 
\[
    (\cdot n)^* L \cong L^{\otimes \frac{n(n+1)}{2}}\otimes (-1)^*L^{\otimes \frac{n(n-1)}{2}}
\]
\cite[p.59 Corollary 3]{mumford2008abelian} we get $(\cdot 2)^*\Theta = 4\Theta$ (using symmetry so that $\Theta = (-1)^*\Theta$). 
Thus, we conclude that 
\begin{align*}
\int_{A^{[2]}}xyE^2 &= -4\int_{A^2} \pi_1^*(\Theta)\Sigma^*(\Theta)\cdot \Delta \\
                    &= -4\int_{A} \Theta\cdot (4\Theta) \\
                    &= -32k.
\end{align*}
We summarize this discussion with a table, recalling that $\Theta^2 = 2k$ for the chosen polarization $\Theta$ on $A$ and that $B = \frac{E}{2}$: 
\begin{center}
\begin{tabular}{||c | c||}
\hline
Cycle & $\int_{A^{[2]}}$ \\
\hline
$x^4$ & $12k^2$ \\
\hline
$x^3y$ & $12k^2$ \\
\hline
$x^2y^2$ & $8k^2$ \\
\hline
$x^2B^2$ & $-4k$ \\
\hline
$xyB^2$ & $-8k$ \\
\hline
$y^2B^2$ & $-16k$ \\
\hline
\end{tabular}
\end{center}

\subsection{Wirtinger pullbacks}\hfill\\
Consider the Wirtinger map $\xi: A\times A\rightarrow A\times A$ defined via $\xi(x,y) = (x + y, x - y)$ for $A$ any abelian variety 
with symmetric line bundle $\Theta$. By the see-saw theorem, we have that 
\[
\xi^*(\pi_1^*\Theta\otimes\pi_2^*\Theta) = \pi_1^*\Theta^{\otimes 2} \otimes \pi_2^*\Theta^{\otimes 2}
\]
(see \cite[Section 3 Proposition 1]{mumford_on_equations} for a proof and \cite[p. 336]{mumford_prym}). Direct calculation shows that the diagram
\begin{center}
\begin{tikzcd}
A\times A \arrow[r, "\xi"] \arrow[d, "\pi_1"'] & A\times A \arrow[d, "\Sigma"] \\
A \arrow[r, "\cdot 2"]                         & A                            
\end{tikzcd}
\end{center}
commutes where $\cdot 2$ is the endomorphism $x \mapsto 2\cdot x$ on $A$, so that 
\begin{equation}\label{wirtinger_pullback}
\xi^*\Sigma^*\Theta = \pi_1^*(\cdot 2)^*\Theta.
\end{equation}
Since $\Theta$ is symmetric we conclude that 
\[
\xi^*\Sigma^*\Theta = \pi_1^*\Theta^{\otimes 4},
\]
and hence that 
\begin{equation}\label{wirtinger_pullback_theta}
\xi^*((\pi_1^*\Theta\otimes\pi_2^*\Theta)^{\otimes a}\otimes(\Sigma^*\Theta)^{\otimes b}) \cong 
\pi_1^*\Theta^{\otimes(2a+4b)}\otimes \pi_2^*\Theta^{\otimes 2a}
\end{equation}
on $A^2$. We will use this later to rule out certain coefficients for $g^*E$ in Neron-Severi by considering dimensions of 
global sections. 

\subsection{Dimensions of global sections on the symmetric product}\label{global_sections_calcs}\hfill\\
Intersection numbers alone will not suffice to show what we want about the action of automorphisms of $A^{[2]}$ on $NS(A^{[2]})$. 
If $c \ge 0$, then any line bundle $L$ on $A^{[2]}$ in the Neron-Severi equivalence class $ax + by + cB$ can be written as 
$\pi^*L'\otimes \O_{A^{[2]}}(cB)$ for some line bundle $L$ on $A^{(2)}$ in the equivalence class $ax + by\in NS(A^{(2)})$ where 
$\pi:A^{[2]}\rightarrow A^{(2)}$ is the Hilbert-Chow morphism. We recall by formula \ref{pushforward_formula} that it suffices to calculate the dimensions of global sections of $L'$ on $A^{(2)}$ in this case. 

We give the following nearly-complete formulas when $\Theta$ is a symmetric principal polarization: 
\begin{formula}\label{global_sections_formula}
\[
\dim H^0(A^{(2)}, \Theta_{(2)}^{\otimes k} \otimes \Sigma^*\Theta^{\otimes \ell} \otimes L_0) =
    \begin{cases}
        0, & \text{if } k < 0 \text{ or } k + 2\ell < 0\\
        0, & \text{if } k \ge 0 \text{ and } k + 2\ell = 0 \text{ and } L_0^{\otimes 2}\ncong\O_{A^{(2)}}\\
        \ell^2, & \text{if } k = 0 \text{ and } \ell > 0\\
        \frac{(k^2+1)(k+2\ell)^2}{2}, & \text{if } k > 0 \text{ and } k + 2\ell > 0
    \end{cases}
\]
\end{formula}
\noindent
where $L_0$ is a line bundle in $Pic^0(A^{(2)})$. The coefficients $k, \ell$ index the class in the Neron-Severi group $NS(A^{(2)})$, 
and twisting by arbitrary $L_0$ gives a class in $Pic(A^{(2)})$. 
We will partially address and generalize the remaining cases in the next section.

We note by formula \ref{S2_invar_sections} that these formulas give the dimension of the space of $\S_2$-invariant sections 
of the pullbacks of these line bundles to $A^2$, when it can be difficult to explicitly determine the global sections of these 
pullbacks and the corresponding $\S_2$-action. 

Let us consider each of these cases. 

\subsubsection{Cases where $H^0 = 0$}\hfill\\
Since $\Sigma^*:\Pic^0(A)\rightarrow\Pic^0(A^{(2)})$ is an isomorphism, we may write our line bundle on $A^{(2)}$ as 
\[
    L = \Theta_{(2)}^{\otimes k}\otimes\Sigma^*(\Theta^{\otimes \ell} \otimes L_0)  
\]
for some $L_0\in\Pic^0(A)$. Since the projection $p:A^2\rightarrow A^{(2)}$ is surjective, the pullback $p^*$ induces an injection 
on global sections of locally free sheaves (if $p^*s$ is the zero section for some $s\in H^0(A^{(2)}, V)$ with $V\rightarrow A^{(2)}$ 
a vector bundle, then $s(p(x)) = 0$ for all $x\in A^2$, but since $p$ is surjective this implies $s(y) = 0$ for all $y\in A^{(2)}$ so 
that $s = 0$). Thus, if we can show that the pullback $p^*L$ on $A^2$ has no global sections, then $H^0(A^{(2)}, L) = 0$, as desired. 
Moreover, since the Wirtinger map $\xi:A^2\rightarrow A^2$ considered in the previous subsection is surjective, we can instead try to 
show that $H^0(A^2, \xi^*p^*L) = 0$. By our results in the previous section, this is the same as 
\begin{align*}
    H^0(A^2, \xi^*p^*L) &\cong H^0(A^2, \xi^*((\pi_1^*\Theta\otimes\pi_2^*\Theta)^{\otimes k}\otimes\Sigma^*(\Theta^{\otimes \ell}\otimes L_0))) \\
    &\cong H^0(A^2, \pi_1^*\Theta^{\otimes(2k+4\ell)}\otimes\pi_2^*\Theta^{2k}\otimes \pi_1^*(\cdot 2)^*L_0).
\end{align*}
From \cite{mumford2008abelian} we have that $(-1)^*L_0\cong L_0^{\otimes -1}$, and hence $(\cdot 2)^*L_0 \cong L_0^{\otimes 2}$ by the 
usual formula for $(\cdot n)^*L$, so that this becomes 
\begin{align*}
    H^0(A^2, \xi^*p^*L) &\cong H^0(A^2, \pi_1^*(\Theta^{\otimes(2k+4\ell)}\otimes L_0^{\otimes 2})\otimes \pi_2^*\Theta^{\otimes 2k}) \\
        &\cong H^0(A, \Theta^{\otimes(2k+4\ell)}\otimes L_0^{\otimes 2})\otimes H^0(A, \Theta^{\otimes 2k})
\end{align*}
by the Künneth decomposition for the sheaf cohomology of a box product 
(\cite[\href{https://stacks.math.columbia.edu/tag/0BEC}{Tag 0BEC}]{stacks-project}). This gives us the desired vanishing in all of 
our cases. If $k < 0$, then the second factor $H^0(A, \Theta^{\otimes 2k}) = 0$ automatically. If $k + 2\ell < 0$ then 
$\Theta^{\otimes(2k + 4\ell)}$ has no global sections and Euler characteristic 
\begin{align*}
    \chi(A, \Theta^{\otimes(2k+4\ell)}) &= \frac{(2k+4\ell)^2}{2}\int_A \Theta^2 \\
        &= (2k+4\ell)^2 \\
        &\neq 0,
\end{align*}
so that by \cite[p.150]{mumford2008abelian} the sheaf cohomology of $\Theta^{\otimes(2k+4\ell)}$ is concentrated in a single degree. This 
degree (the \textit{index}) is determined by the first Chern class of each line bundle \cite[p.61]{BL_complex_abvar}, which is an element of the discrete group $H^2(A, \Z)$ and hence is constant within a numerical 
equivalence class in $\Pic(A)$, so that $H^0(A, \Theta^{\otimes (2k+4\ell)}\otimes L_0^{\otimes 2}) = 0$ if $2k+4\ell < 0$. If $2k + 4\ell = 0$, 
then the only line bundle in $Pic^0(A)$ with a global section is $\O_A$, so we again get the desired vanishing if 
$L_0^{\otimes 2}\ncong\O_A$.

\subsubsection{Case where $k = 0, \ell > 0$}\hfill\\
This case is almost immediate. We have that $L = \Sigma^*(\Theta^{\otimes \ell}\otimes L_0)$ for some $L_0\in\Pic^0(A)$. 
We may pushforward under $\Sigma$ to instead calculate 
\begin{align*}
    H^0(A^{(2)}, L) &\cong H^0(A, \Sigma_*L) \\
                    &\cong H^0(A, \Theta^{\otimes \ell}\otimes L_0)
\end{align*}
by push-pull, since $\Sigma$ has connected fibers. Since $\ell > 0$, we have that 
\begin{align*}
    \chi(A, \Theta^{\otimes \ell}\otimes L_0) &= \frac{\ell^2}{2}\int_A \Theta^2 \\
                                        &= \ell^2 \\
                                        &\neq 0
\end{align*}
since $L_0$ is numerically trivial. Since $\ell > 0$, $\Theta^{\otimes \ell}$ is effective and hence has a section, so that by concentration 
of cohomology $\dim H^0(A, \Theta^{\otimes \ell}) = \ell^2$. Since the index of a non-degenerate line bundle is unchanged by twisting 
with $L_0\in\Pic^0(A)$, we are done. 

\subsubsection{Case where $k > 0$ and $k+2\ell > 0$}\label{symm_prod_h0}\hfill\\
This case requires substantially more work than the previous two. 
Define the morphism 
\begin{align*}
    w(x, y)&: A\times A \rightarrow A^{(2)} \\
    w(x, y) &= (x + y, x - y),
\end{align*}
and note that $w$ is invariant if we replace $y$ with $-y$, so that $w$ commutes with the natural $\Z/2\Z$ action 
on the second $A$ factor and hence descends to a morphism 
\begin{align*}
    \mu(x, y)&: A\times (A / \pm 1) \rightarrow A^{(2)} \\
    \mu(x, y) &= (x + y, x - y).
\end{align*}
\begin{proposition}
The following diagram is Cartesian: 
\begin{center}
\begin{tikzcd}
A\times (A/\pm 1) \arrow[r, "\mu"] \arrow[d, "\pi_1"'] & A^{(2)} \arrow[d, "\Sigma"] \\
A \arrow[r, "(\cdot 2)"]                               & A                          
\end{tikzcd}
\end{center}
\end{proposition}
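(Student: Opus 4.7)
The plan is to identify $A \times (A/\pm 1)$ with the fiber product $A \times_A A^{(2)}$ by first computing the analogous fiber product on the unsymmetrized level $A \times_A A^2$ and then passing to the $\S_2$-quotient. To begin, I would verify commutativity of the square: for $(x, [y]) \in A \times (A/\pm 1)$,
\[
\Sigma(\mu(x,[y])) = (x+y) + (x-y) = 2x = (\cdot 2)(\pi_1(x,[y])),
\]
so the universal property of the fiber product produces a canonical morphism $\phi: A \times (A/\pm 1) \to A \times_A A^{(2)}$, and the remaining task is to exhibit $\phi$ as an isomorphism.

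The main step is to describe $A \times_A A^2$ directly, where the left arrow is $\cdot 2:A\to A$ and the right is addition $A^2\to A$. The map
\[
\psi: A \times A \longrightarrow A \times_A A^2, \qquad (x, y) \mapsto (x,\, (x+y,\, x-y))
\]
lands in the fiber product since $(x+y)+(x-y)=2x$, and is an isomorphism with inverse $(x,(p,q)) \mapsto (x, p-x)$. Under $\psi$, the $\S_2$-action on the $A^2$ factor by swapping corresponds to the action on $A \times A$ that is trivial on the first factor and acts as $\pm 1$ on the second: the swap $(p,q)\mapsto(q,p)$ sends $y = p-x$ to $q-x = 2x-p-x = -y$. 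Passing to the quotient would then yield $(A \times_A A^2)/\S_2 \cong A \times (A/\pm 1)$.

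The final step is to identify this quotient with $A \times_A A^{(2)}$ itself. Since multiplication by $2$ on $A$ is a separable isogeny, hence finite \'etale (in particular flat), the formation of the $\S_2$-quotient commutes with base change along $\cdot 2$, giving
\[
(A \times_A A^2)/\S_2 \;=\; A \times_A (A^2/\S_2) \;=\; A \times_A A^{(2)}.
\]
Combining with the previous step, $A \times_A A^{(2)} \cong A \times (A/\pm 1)$, and tracing through the constructions shows this isomorphism is precisely $\phi$. The most delicate point is the commutation of the quotient with the flat base change, which affine-locally reduces to the fact that $(C \otimes_R B)^G = C \otimes_R B^G$ whenever $C$ is $R$-flat; everything else is a routine unwinding of universal properties.
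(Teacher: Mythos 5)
Your proposal is correct, but it takes a genuinely different route from the paper. The paper verifies the universal property of the fiber product directly: given a test scheme $X$ with compatible maps $f:X\to A$ and $g:X\to A^{(2)}$, it constructs the induced map $h=f\times h_2$ by translating $g(x)$ by $-f(x)$ so that it lands in the anti-diagonal copy of $A/\pm 1$ inside $\Sigma^{-1}(0)\subset A^{(2)}$, and checks the two commutation relations and uniqueness by hand. You instead compute the unsymmetrized fiber product $A\times_A A^2$ first, identify it with $A\times A$ via the Wirtinger-type map $(x,y)\mapsto (x,(x+y,x-y))$, observe that the swap on $A^2$ transports to $(x,y)\mapsto(x,-y)$, and then descend to $\S_2$-quotients using the fact that formation of invariants commutes with flat base change along $\cdot 2$ (affine-locally, $(C\otimes_R B)^G = C^G\otimes_R B$ for $B$ flat over $R$, since invariants are a kernel and finite products commute with tensoring). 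Your key lemma --- flat base change for quotients by finite groups --- is exactly the point the paper's argument avoids, at the cost of a more hands-on construction; your version is more structural, dovetails with the Wirtinger square $\xi$ that the paper uses immediately afterward to compute $\mu^*\Theta_{(2)}^{\otimes k}$, and is arguably cleaner on the scheme-theoretic side, whereas the paper's proof is self-contained and requires no descent input beyond the universal property of the quotient. Both arguments are complete and valid.
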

\begin{proof}
A simple calculation shows that the square is commutative. It remains to prove the universal property of pullback squares. Suppose that 
$X$ is a scheme equipped with morphisms $f:X\rightarrow A, g:X\rightarrow A^{(2)}$ such that the diagram 
\begin{center}
\begin{tikzcd}
X \arrow[r, "g"] \arrow[d, "f"'] & A^{(2)} \arrow[d, "\Sigma"] \\
A \arrow[r, "\cdot 2"]           & A                          
\end{tikzcd}
\end{center}
is commutative. We wish to show the existence of a morphism $h:X\rightarrow A\times (A/\pm 1)$ such that the diagram 
\begin{center}
\begin{tikzcd}
X \arrow[rdd, "f"', bend right] \arrow[rrd, "g", bend left] \arrow[rd, "h"] &                                                      &                             \\
                                                                            & A\times (A/\pm 1)\arrow[r, "\mu"] \arrow[d, "\pi_1"'] & A^{(2)} \arrow[d, "\Sigma"] \\
                                                                            & A \arrow[r, "\cdot 2"]                               & A                          
\end{tikzcd}
\end{center}
commutes. We will construct this morphism as $h = h_1\times h_2$ for morphisms $h_1:X\rightarrow A,\, h_2:X\rightarrow A/\pm 1$. 
We unsurprisingly set $h_1 = f$. 
To find $h_2$, consider the anti-diagonal embedding $i: A\hookrightarrow A^2,\, x\mapsto (x, -x)$, which is equivariant with 
respect to $\Z/2\Z$ where the group action is via multiplication by $\pm 1$ on the left and swapping the factors on the right, and hence 
descends to an embedding $\bar{i}: A/\pm 1 \hookrightarrow A^{(2)}$. Given any point $a\in A$ we have a natural translation morphism 
$t_a^{[2]}:A^{(2)}\rightarrow A^{(2)}$ acting pointwise. 

Observe that 
\[
\Sigma\circ t_{-f(x)}^{[2]}(g(x)) = \Sigma(g(x)) - 2f(x) = 2f(x) - 2f(x) = 0
\]
for any $x\in A$, so that $t_{-f(x)}^{[2]}(g(x))$ lies in the image of $\bar{i}$. We may thus define 
\[
h_2(x) := \bar{i}^{-1}(t_{-f(x)}^{[2]}(g(x))) \in A/\pm 1.
\]
The commutative relation $f = \pi_1\circ h$ is immediate. To prove the other relation $g = \mu\circ h$, suppose that we may write 
$g(x) = (a,b) = (b,a)\in A^{(2)}$ for points $a,b\in A$. The translate $t^{[2]}_{-f(x)}(g(x)) = (a-f(x), b-f(x))$ satisfies 
$a-f(x) = -(b-f(x))$ per our calculation, and $h_2(x)$ may be identified with the image of $a-f(x)\in A/\pm 1$. We then calculate 
\begin{align*}
\mu\circ h(x) &= \mu(f(x), h_2(x)) \\
                &= (f(x) + h_2(x), f(x) - h_2(x)) \\
                &= (f(x) + (a-f(x)), f(x) + (b-f(x))) \\
                &= (a, b) \\
                &= g(x).
\end{align*}
To see that this induced morphism $X\rightarrow A\times A/\pm 1$ is unique, simply note that $\mu(x,y) = t_x^{[2]}\bar{i}(y)$.
The universal property of pullbacks thus holds, so that our original diagram was Cartesian, as desired. 
\end{proof}
Set $B_k = \Sigma_*(\Theta_{(2)}^{\otimes k})$. Let $L_0\in\Pic^0(A)$ be arbitrary, so that we wish to calculate the dimension of 
$H^0(A^{(2)}, \Theta_{(2)}^{\otimes k}\otimes \Sigma^*(\Theta^{\otimes \ell}\otimes L_0))$.
By push-pull we wish to determine the dimension of $H^0(A, B_k\otimes (\Theta^{\otimes \ell}\otimes L_0))$. Since the preceding diagram is Cartesian we find that 
\[
    (\cdot 2)^*B_k \cong \pi_{1*}\mu^*\Theta_{(2)}^{\otimes k}.
\]
We wish to show that 
\[
\mu^*\Theta_{(2)}^{\otimes k} \cong \pi_1^*\Theta^{\otimes 2k}\otimes \pi_2^*L_k
\]
for some line bundle $L_k$ on $A/\pm 1$ where $\pi_2: A\times (A/\pm 1)\rightarrow A/\pm 1$ is the projection (since $k > 0$ in this 
section, we hope that the choice of notation $L_k$ will not cause confusion with the fixed element $L_0\in\Pic^0(A)$). Consider the 
twisted line bundle 
\[
M_k = \mu^*\Theta_{(2)}^{\otimes k}\otimes \pi_1^*\Theta^{\otimes -2k}.
\]
It suffices by the seesaw principle \cite[p.54]{mumford2008abelian} to show that $M_k|_{A\times \{y\}} \cong \O_A$ for all $y\in A/\pm 1$. 
This restriction is the same as the pullback $i_{A, y}^*M_k$ under the inclusion map 
$i_{A,y}:A\hookrightarrow A\times (A/\pm 1), x\mapsto (x,y)$. Fix a lift $\hat{y}$ of $y$ from $A / \pm 1$ to $A$: there are 
two such choices. We have a similar inclusion map $i_{A, \hat{y}}:A \hookrightarrow A\times A, x\mapsto (x,\hat{y})$ yielding 
a commutative diagram 
\begin{center}
\begin{tikzcd}
A \arrow[r, "{i_{A,\hat{y}}}", hook] \arrow[d, "\id"] & A\times A \arrow[d, "\pi_1\times q"] \arrow[r, "\xi"] & A\times A \arrow[d, "p"] \\
A \arrow[r, "{i_{A,y}}", hook]                        & A\times (A/\pm 1) \arrow[r, "\mu"]                    & A^{(2)}                 
\end{tikzcd}
\end{center}
where $q, p$ are natural projections and $\xi:A^2\rightarrow A^2$ is the Wirtinger map $\xi(x,y) = (x+y, x-y)$. 
To show that $M_k|_{A\times\{y\}}\cong i_{A,y}^*M_k$ is trivial it suffices to prove that 
\begin{align*}
    \id^*i_{A,y}^*M_k &\cong i_{A,\hat{y}}^*(\pi_1^*\Theta^{\otimes -2k}\otimes (\pi_1\times q)^*\mu^*\Theta_{(2)}^{\otimes k}) \\
        &\cong i_{A,\hat{y}}^*(\pi_1^*\Theta^{\otimes -2k}\otimes \xi^*p^*\Theta_{(2)}^{\otimes k}) \\
        &\cong i_{A,\hat{y}}^*(\pi_1^*\Theta^{\otimes -2k}\otimes \xi^*(\pi_1^*\Theta\otimes\pi_2^*\Theta)^{\otimes k}) \\
        &\cong i_{A,\hat{y}}^*(\pi_1^*\Theta^{\otimes -2k}\otimes (\pi_1^*\Theta\otimes\pi_2^*\Theta)^{\otimes 2k}) \\
        &\cong i_{A,\hat{y}}^*\pi_2^*\Theta^{\otimes 2k} \\
        &\cong \O_A
\end{align*}
is trivial, as we have just done. We conclude that $i_{A,y}^*M_k$ is trivial for all $y\in A/\pm 1$ 
so that 
\begin{align*}
\mu^*\Theta_{(2)}^{\otimes k}\otimes \pi_1^*\Theta^{\otimes -2k} &= M_k \\
    &\cong \pi_2^*L_k
\end{align*}
for some line bundle $L_k$ on $A/\pm 1$ by the seesaw principle. Hence,
\[
\mu^*\Theta_{(2)}^{\otimes k} \cong \pi_1^*\Theta^{\otimes 2k}\otimes \pi_2^*L_k.
\]
We thus derive that 
\begin{align*}
(\cdot 2)^*B_k &\cong \pi_{1*}\mu^*\Theta_{(2)}^{\otimes k} \\
                &\cong \pi_{1*}(\pi_1^*\Theta^{\otimes 2k}\otimes \pi_2^*L_k) \\
                &\cong \Theta^{\otimes 2k}\otimes H^0(A/\pm 1, L_k)
\end{align*}
where by tensoring a sheaf $\mathcal{F}$ with a vector space $V$ we mean the direct sum $\mathcal{F}^{\oplus\dim V}$.
Set $V_k = H^0(A/\pm 1, L_k)$ in what follows. Twisting our previous equality yields  
\[
(\cdot 2)^*(B_k\otimes (\Theta^{\otimes \ell}\otimes L_0)) \cong (\Theta^{\otimes 2k + 4\ell}\otimes L_0^{\otimes 2}) \otimes V_k
\]
since $\Theta$ is symmetric so that $(\cdot 2)^*\Theta \cong \Theta^{\otimes 4}$ and $(\cdot 2)^*L_0 \cong L_0^{\otimes 2}$ since 
$L_0\in\Pic^0(A)$ as discussed in previous cases. 
Set $\mathcal{F}_{k,\ell} = B_k\otimes(\Theta^{\otimes \ell}\otimes L_0)$, the sheaf whose cohomology we wish to determine. 
We will see later (Lemma \ref{sigma_pushforward}) that the higher pushforwards $R^j\Sigma_*\Theta_{(2)}^{\otimes k}$ vanish, so that 
$B_k$ is in fact a vector bundle and hence so is $\mathcal{F}_{k,\ell}.$

To calculate the global sections of this, we may in fact just calculate Euler characteristics: 
\begin{lemma}\label{Fkl_cohomology}
    For all $j > 0$, $H^j(A, \mathcal{F}_{k,\ell}) = 0$. In particular, $\dim H^0(A, \mathcal{F}_{k,\ell}) = \chi(A, \mathcal{F}_{k,\ell})$. 
\end{lemma}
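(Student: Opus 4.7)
The plan is to transfer vanishing from the pullback under the multiplication-by-$2$ isogeny, for which we already have an explicit description. By the identification established just before the lemma,
\[
(\cdot 2)^*\mathcal{F}_{k,\ell} \cong (\Theta^{\otimes 2k+4\ell}\otimes L_0^{\otimes 2})^{\oplus \dim V_k}.
\]
The hypotheses $k > 0$ and $k + 2\ell > 0$ give $2k + 4\ell > 0$, so $\Theta^{\otimes 2k+4\ell}$ is ample. Since $L_0^{\otimes 2}\in\Pic^0(A)$, the twist $\Theta^{\otimes 2k+4\ell}\otimes L_0^{\otimes 2}$ is isomorphic to a translate of $\Theta^{\otimes 2k+4\ell}$ under the polarization isogeny $\phi_{\Theta^{\otimes 2k+4\ell}}$, and so has the same cohomology, which by \cite[p.150]{mumford2008abelian} is concentrated in degree zero. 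Thus $H^j(A,(\cdot 2)^*\mathcal{F}_{k,\ell}) = 0$ for all $j > 0$.

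To descend this vanishing back to $\mathcal{F}_{k,\ell}$ itself, I would use that $(\cdot 2):A\rightarrow A$ is a finite \'etale Galois cover with group $G = A[2]$ of order $16$. In characteristic zero, $(\cdot 2)_*\O_A$ decomposes into $G$-isotypic direct summands, one of which is $\O_A$; the projection formula then produces a direct sum decomposition
\[
(\cdot 2)_*(\cdot 2)^*\mathcal{F}_{k,\ell} \cong \mathcal{F}_{k,\ell}\otimes_{\O_A}(\cdot 2)_*\O_A \cong \mathcal{F}_{k,\ell}\oplus \mathcal{G}
\]
for an auxiliary vector bundle $\mathcal{G}$. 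Since $(\cdot 2)$ is finite, $(\cdot 2)_*$ is exact and preserves cohomology, so $H^j(A,\mathcal{F}_{k,\ell})$ embeds as a direct summand of $H^j(A,(\cdot 2)^*\mathcal{F}_{k,\ell}) = 0$ for $j > 0$. The second assertion $\dim H^0(A,\mathcal{F}_{k,\ell}) = \chi(A,\mathcal{F}_{k,\ell})$ then follows at once from the definition of the Euler characteristic.

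The main technical point is the \'etale descent step; the numerical hypotheses have been chosen precisely so that the pullback under $(\cdot 2)$ becomes a sum of copies of an ample line bundle up to a $\Pic^0$-twist, avoiding any degenerate line bundles that would otherwise spread cohomology across multiple degrees and obstruct the identification of $\dim H^0$ with $\chi$.
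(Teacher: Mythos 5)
Your proposal is correct and follows essentially the same route as the paper: pull back under the multiplication-by-$2$ isogeny to get a direct sum of copies of $\Theta^{\otimes 2k+4\ell}\otimes L_0^{\otimes 2}$ with vanishing higher cohomology, then descend using that $\mathcal{F}_{k,\ell}$ is a direct summand of $(\cdot 2)_*(\cdot 2)^*\mathcal{F}_{k,\ell}$ (the paper makes the summand explicit by writing $(\cdot 2)_*\O_A$ as the sum of the sixteen $2$-torsion line bundles, with $\O_A$ among them). The only cosmetic difference is your phrasing of the isotypic decomposition and of why the twisted ample bundle has cohomology concentrated in degree zero.
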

\begin{proof}
    We first claim that the pushforward $(\cdot 2)_*\O_A$ splits as the direct sum of all 2-torsion line bundles: 
    \[
    (\cdot 2)_*\O_A \cong \bigoplus_{L\in \Pic^0(A)[2]} L.
    \]
    This follows by \cite[p.72]{mumford2008abelian} applied to the group $G = A[2]$ of 2-torsion points acting on $A$ by addition. 
    By push-pull we have that 
    \[
    (\cdot 2)_*(\cdot 2)^*\mathcal{F}_{k,\ell} \cong \bigoplus_{L\in Pic^0(A)[2]}L\otimes\mathcal{F}_{k,\ell}. 
    \]

    \noindent
    Since $\cdot 2:A\rightarrow A$ is finite it is affine, so that the pushforward $(\cdot 2)_*$ preserves higher cohomology 
    (combine \cite[\href{https://stacks.math.columbia.edu/tag/01XC}{Lemma 01XC}]{stacks-project} and  
    \cite[\href{https://stacks.math.columbia.edu/tag/01F4}{Lemma 01F4}]{stacks-project}). Thus, 
    \begin{align*}
    H^j(A, (\cdot 2)_*(\cdot 2)^*\mathcal{F}_{k,\ell}) &= H^j(A, (\cdot 2)^*\mathcal{F}_{k,\ell}) \\                                                   &= H^j(A, (\Theta^{\otimes 2k+4\ell}\otimes L_0^{\otimes 2})\otimes V_k) \\
            &= H^j(A, \Theta^{\otimes 2k+4\ell}\otimes L_0^{\otimes 2}) \otimes V_k \\
            &= 0
    \end{align*}
    for $j > 0$ since $2k+4\ell > 0$. Since $\mathcal{F}_{k,\ell}$ embeds into $(\cdot 2)_*(\cdot 2)^*\mathcal{F}_{k,\ell}$ as a direct summand, 
    we conclude that $\mathcal{F}_{k,\ell}$ has vanishing higher cohomology since cohomology respects direct sums.
\end{proof}
We now wish to compute the dimension of $V_k = H^0(A/\pm 1, L_k)$, and will do so by first showing $H^i(A/\pm 1, L_k) = 0$ for $i > 0$ and then calculating 
$\chi(A/\pm 1, L_k)$. 

To show this, note that since $q: A \rightarrow A/\pm 1$ is finite the higher pushforward sheaves $R^i q_*q^*L_k$ vanish giving the equality 
\[
    H^i(A, q^*L_k) = H^i(A/\pm 1, q_*q^*L_k) 
\]
of higher cohomology groups using \cite[\href{https://stacks.math.columbia.edu/tag/01XC}{Lemma 01XC}]{stacks-project} 
and \cite[\href{https://stacks.math.columbia.edu/tag/01F4}{Lemma 01F4}]{stacks-project} as before. 
The canonical adjunction morphism $\O_{A/\pm 1}\rightarrow q_*\O_A$ exhibits $\O_{A/\pm 1}$ as a factor of $q_*\O_A$, so by push-pull 
\begin{align*}
    H^i(A, q^*L_k) &= H^i(A/\pm 1, q_*q^*L_k) \\
                &= H^i(A/\pm 1, L_k \otimes q_*\O_A)
\end{align*}
and hence $H^i(A/\pm 1, L_k)$ is a factor of $H^i(A, q^*L_k)$. In particular, to show $H^i(A/\pm 1, L_k) = 0$ for $i > 0$ it suffices to show that $H^i(A, q^*L_k) = 0$.

To this end, recall the previous diagram 
\begin{center}
\begin{tikzcd}
A\times A \arrow[d, "(\id\times q)"'] \arrow[r, "\xi"] & A\times A \arrow[d, "p"] \\
A\times (A/\pm 1) \arrow[r, "\mu"]                     & A^{(2)}                 
\end{tikzcd}
\end{center}
so that on the one hand 
\begin{align*}
(\id\times q)^*\mu^*\Theta_{(2)}^{\otimes k} &\cong (\id\times q)^*(\pi_1^*\Theta^{\otimes 2k}\otimes \pi_2^*L_k) \\ 
                                            &\cong \pi_1^*\Theta^{\otimes 2k}\otimes \pi_2^*q^*L_k
\end{align*}
where we use $\pi_1,\pi_2$ to denote projections on each of the product spaces $A\times A, A\times (A/\pm 1)$, but on the other hand 
\begin{align*}
(\id\times q)^*\mu^*\Theta_{(2)}^{\otimes k} &\cong \xi^*p^*\Theta_{(2)}^{\otimes k} \\
                                            &\cong \xi^*(\pi_1^*\Theta\otimes\pi_2^*\Theta)^{\otimes k} \\
                                            &\cong \pi_1^*\Theta^{\otimes 2k}\otimes\pi_2^*\Theta^{\otimes 2k},
\end{align*}
so that 
\[
\pi_1^*\Theta^{\otimes 2k}\otimes \pi_2^*q^*L_k \cong \pi_1^*\Theta^{\otimes 2k}\otimes\pi_2^*\Theta^{\otimes 2k}
\]
and hence 
$q^*L_k\cong \Theta^{\otimes 2k}$ since we may twist this equality by $\pi_1^*\Theta^{\otimes -2k}$ to cancel the first factor and 
then pushforward to $A$ under $\pi_2$ (noting that the fibers of $\pi_2$ are connected). We know that $H^i(A, \Theta^{\otimes 2k}) = 0$ 
for $i > 0$ since $k > 0$ by Kodaira vanishing. Thus, $H^i(A, q^*L_k) = 0$, so that $H^i(A/\pm 1, L_k) = 0$ as previously noted. 

Since we have shown that the higher cohomology of $L_k$ vanishes, we now calculate $\dim V_k = \chi(A/\pm 1, L_k)$. To do this, 
we will calculate $\chi(A, B_k) = \chi(A, \Sigma_*\Theta_{(2)}^{\otimes k})$ in two different ways. We recall the following fact: 
\begin{lemma}\label{chi_pushforward}
    If $f: X\rightarrow Y$ is a morphism and $\mathcal{G}$ a sheaf on $X$ such that $R^j f_*\mathcal{G} = 0$ for $j > 0$, then 
    $\chi(X, \mathcal{G}) = \chi(Y, f_*\mathcal{G})$. 
\end{lemma}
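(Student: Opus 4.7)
The plan is to invoke the Leray spectral sequence for the morphism $f: X \to Y$ applied to the sheaf $\mathcal{G}$, which reads
\[
E_2^{p,q} = H^p(Y, R^q f_*\mathcal{G}) \Rightarrow H^{p+q}(X, \mathcal{G}).
\]
Under the hypothesis that $R^q f_*\mathcal{G} = 0$ for all $q > 0$, all rows of the $E_2$-page except $q = 0$ vanish identically, so the spectral sequence degenerates at $E_2$. Consequently, for each $p \geq 0$ we obtain a canonical isomorphism
\[
H^p(X, \mathcal{G}) \cong H^p(Y, f_*\mathcal{G}).
\]

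Having established these isomorphisms on each individual cohomology group, taking the alternating sum of dimensions yields
\[
\chi(X, \mathcal{G}) = \sum_{p \geq 0} (-1)^p \dim H^p(X, \mathcal{G}) = \sum_{p \geq 0} (-1)^p \dim H^p(Y, f_*\mathcal{G}) = \chi(Y, f_*\mathcal{G}),
\]
which is the desired conclusion. Both sides are finite sums provided the relevant cohomology groups are finite-dimensional (e.g.\ when $X$ and $Y$ are proper over a field and the sheaves are coherent), which will be the case in the application to $\Sigma: A^{(2)} \to A$ with $\mathcal{G} = \Theta_{(2)}^{\otimes k}$.

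There is no serious obstacle here; the result is essentially a formal consequence of the collapse of the Leray spectral sequence. The only thing to be mindful of is the setting in which one applies Leray (quasi-coherent sheaves on reasonable schemes, or more generally the derived-category statement $R\Gamma(X, \mathcal{G}) \simeq R\Gamma(Y, Rf_*\mathcal{G})$, which under the vanishing hypothesis reduces to $R\Gamma(Y, f_*\mathcal{G})$). For the applications in this paper, $X$ and $Y$ are both proper varieties over $\mathbb{C}$ and $\mathcal{G}$ is a coherent sheaf, so all cohomology groups in sight are finite-dimensional and the argument goes through without modification.
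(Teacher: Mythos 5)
Your proof is correct and follows exactly the same route as the paper: the paper's proof is a one-line appeal to the degeneration of the Leray spectral sequence $H^p(Y, R^q f_*\mathcal{G}) \Rightarrow H^{p+q}(X,\mathcal{G})$ under the vanishing hypothesis, which you have simply spelled out in more detail.
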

\begin{proof}
This is an immediate consequence of \cite[\href{https://stacks.math.columbia.edu/tag/01F4}{Lemma 01F4}]{stacks-project}, as the 
Leray spectral sequence $H^j(Y, R^i f_*\mathcal{G}) \Rightarrow H^{i+j}(X, \mathcal{G})$ will degenerate. 
\end{proof}
In light of this, we have the following: 
\begin{lemma}\label{sigma_pushforward}
For all $j > 0$, $R^j \Sigma_*\Theta_{(2)}^{\otimes k} = 0$, and hence 
\[
\chi(A, B_k) = \chi(A^{(2)}, \Theta_{(2)}^{\otimes k}). 
\]
\end{lemma}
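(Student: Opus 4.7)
The plan is to deduce the vanishing of $R^j\Sigma_*\Theta_{(2)}^{\otimes k}$ from flat base change applied to the Cartesian square
\[
\begin{tikzcd}
A\times (A/\pm 1) \arrow[r, "\mu"] \arrow[d, "\pi_1"'] & A^{(2)} \arrow[d, "\Sigma"] \\
A \arrow[r, "(\cdot 2)"] & A
\end{tikzcd}
\]
already established above. The multiplication-by-$2$ map is an isogeny, hence finite and faithfully flat, so flat base change yields a canonical isomorphism
\[
(\cdot 2)^*R^j\Sigma_*\Theta_{(2)}^{\otimes k} \;\cong\; R^j\pi_{1*}\mu^*\Theta_{(2)}^{\otimes k}
\]
for every $j\ge 0$. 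Since the map $(\cdot 2)$ is faithfully flat, once I show the right-hand side vanishes for $j>0$, it will follow that $R^j\Sigma_*\Theta_{(2)}^{\otimes k}=0$ for $j>0$.

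To analyze the right-hand side I would plug in the isomorphism $\mu^*\Theta_{(2)}^{\otimes k} \cong \pi_1^*\Theta^{\otimes 2k}\otimes\pi_2^*L_k$ derived above, so that by the projection formula
\[
R^j\pi_{1*}\mu^*\Theta_{(2)}^{\otimes k} \;\cong\; \Theta^{\otimes 2k}\otimes R^j\pi_{1*}(\pi_2^*L_k) \;\cong\; \Theta^{\otimes 2k}\otimes H^j(A/\pm 1, L_k)\otimes_{\mathbb{C}}\mathcal{O}_A,
\]
where the second isomorphism is cohomology and base change for the projection $\pi_1$ (whose fibers are all isomorphic to $A/\pm 1$ with trivially varying $L_k$). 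Thus the vanishing reduces to the assertion $H^j(A/\pm 1, L_k) = 0$ for $j>0$, which has already been established just before the statement of the lemma via the fact that $q^*L_k\cong\Theta^{\otimes 2k}$, Kodaira vanishing for $\Theta^{\otimes 2k}$ on $A$, and realizing $L_k$ as a direct summand of $q_*q^*L_k$.

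Once $R^j\Sigma_*\Theta_{(2)}^{\otimes k}=0$ is in hand for $j>0$, the Euler-characteristic equality
\[
\chi(A,B_k) \;=\; \chi(A^{(2)},\Theta_{(2)}^{\otimes k})
\]
is immediate from Lemma \ref{chi_pushforward}, since the Leray spectral sequence for $\Sigma$ degenerates.

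The only subtle step is justifying flat base change together with the descent from $(\cdot 2)^*R^j\Sigma_*\Theta_{(2)}^{\otimes k}=0$ to $R^j\Sigma_*\Theta_{(2)}^{\otimes k}=0$; both follow cleanly from the fact that $(\cdot 2)$ is a finite faithfully flat morphism, so I do not expect genuine obstacles. The computation $\mu^*\Theta_{(2)}^{\otimes k}\cong\pi_1^*\Theta^{\otimes 2k}\otimes\pi_2^*L_k$ and the vanishing $H^j(A/\pm 1,L_k)=0$ for $j>0$ are the substantive inputs, and both have already been proved above the statement.
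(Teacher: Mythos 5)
Your proposal is correct and follows essentially the same route as the paper: flat base change along the Cartesian square for $(\cdot 2)$, the decomposition $\mu^*\Theta_{(2)}^{\otimes k}\cong\pi_1^*\Theta^{\otimes 2k}\otimes\pi_2^*L_k$, the projection formula reducing everything to $H^j(A/\pm 1,L_k)=0$, and then descent of the vanishing (the paper invokes surjectivity of $\cdot 2$ where you invoke faithful flatness, an immaterial difference). The Euler-characteristic conclusion via Lemma \ref{chi_pushforward} also matches the paper.
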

\begin{proof}
Since our previous diagram was Cartesian, we have that 
\begin{align*}
    (\cdot 2)^*R^j\Sigma_*(\Theta_{(2)}^{\otimes k}) &= R^j\pi_{1*}\mu^*\Theta_{(2)}^{\otimes k} \\
                                                    &= R^j\pi_{1*}(\pi_1^*\Theta^{\otimes 2k}\otimes \pi_2^*L_k) \\
                                                    &= \Theta^{\otimes 2k} \otimes R^j\pi_{1*}\pi_2^*L_k \\
                                                    &= \Theta^{\otimes 2k} \otimes H^j(A/\pm 1, L_k) \\
                                                    &= 0
\end{align*}
since we showed that $H^j(A/\pm 1, L_k) = 0$. Since $R^j\Sigma_*(\Theta_{(2)}^{\otimes k})$ pulls back to the zero sheaf under the surjective morphism $\cdot 2:A\rightarrow A$, it must have been the zero sheaf to start with, as desired. 
\end{proof}

\begin{corollary}\label{first_chi_expression}
\[
    \chi(A, B_k) = \frac{k^2(k^2+1)}{2}
\]
for $k > 0$. 
\end{corollary}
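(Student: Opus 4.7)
The plan is to chain three reductions. First, I would invoke Lemma~\ref{sigma_pushforward}: the vanishing $R^j\Sigma_*\Theta_{(2)}^{\otimes k} = 0$ for $j > 0$ forces the Leray spectral sequence for $\Sigma$ to degenerate, giving
\[
\chi(A, B_k) = \chi(A^{(2)}, \Theta_{(2)}^{\otimes k}),
\]
which transfers the problem to the symmetric product.

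Next, I would compute $\chi(A^{(2)}, \Theta_{(2)}^{\otimes k})$ via the classical formula for Euler characteristics of line bundles on symmetric products: for any smooth projective variety $X$ and line bundle $L$,
\[
\chi(X^{(n)}, L_{(n)}) = \binom{\chi(X, L) + n - 1}{n}.
\]
Specializing to $n = 2$ yields $\binom{\chi(A, \Theta^{\otimes k}) + 1}{2}$. Finally, Riemann-Roch on the abelian surface $A$, together with $\Theta^2 = 2$ for principal $\Theta$, gives $\chi(A, \Theta^{\otimes k}) = \tfrac{k^2\Theta^2}{2} = k^2$, so substituting produces the desired $\binom{k^2+1}{2} = \frac{k^2(k^2+1)}{2}$.

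The main obstacle would be justifying the symmetric-product formula if one did not wish to cite it as a black box. An alternative path that stays within the paper's own machinery is to combine the already-derived isomorphism $(\cdot 2)^*B_k \cong \Theta^{\otimes 2k}\otimes V_k$ with the decomposition $(\cdot 2)_*\O_A \cong \bigoplus_{L\in\Pic^0(A)[2]} L$ from the proof of Lemma~\ref{Fkl_cohomology}. Push-pull then gives $\chi(A, (\cdot 2)^*\mathcal{F}) = 16\,\chi(A, \mathcal{F})$ for any locally free $\mathcal{F}$, so that the corollary reduces to the numerical claim $\dim V_k = 2(k^2+1)$. One could extract this in turn from $q^*L_k \cong \Theta^{\otimes 2k}$ together with an eigenspace computation for the $(-1)^*$-action on $H^0(A, \Theta^{\otimes 2k})$. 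However, this route trades a clean citation for nontrivial theta-function input, so I would prefer the direct Macdonald approach.
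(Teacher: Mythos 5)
Your primary route is correct and is essentially the paper's proof: both reduce via Lemma \ref{sigma_pushforward} to $\chi(A^{(2)}, \Theta_{(2)}^{\otimes k})$, evaluate this as $\binom{\chi(A,\Theta^{\otimes k})+1}{2}$, and finish with Riemann--Roch on $A$; the paper merely routes the middle step through $A^{[2]}$ (using $R^j\pi_*\O_{A^{[2]}}=0$ to invoke \cite[Lemma 5.1]{EGL}) instead of citing the classical symmetric-product formula directly. One caution about your alternative route: the paper deduces $\dim V_k = 2(k^2+1)$ \emph{from} this corollary, so that path is circular unless you supply the independent eigenspace computation of $\dim V_k$ that you allude to.
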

\begin{proof}
Since $A^{(2)}$ is the quotient of $A^2$ by the finite group $\Z/2\Z$, the resolution $\pi: A^{[2]}\rightarrow A^{(2)}$ satisfies 
$R^j\pi_*\O_{A^{[2]}} = 0$ for $j > 0$ by \cite{viehweg1977rational}, see also \cite[Theorem 2]{chatzistamatiou2012higher} for the 
same result in positive characteristic. Thus,
\[
    \chi(A^{(2)}, \Theta_{(2)}^{\otimes k}) = \chi(A^{[2]}, \pi^*\Theta_{(2)}^{\otimes k})
\]
by push-pull and Lemma \ref{chi_pushforward}. We may calculate the latter term by \cite[Lemma 5.1]{EGL}, obtaining that 
\begin{align*}
    \chi(A, B_k) &= \chi(A^{(2)}, \Theta_{(2)}^{\otimes k}) \\
                &= \chi(A^{[2]}, \pi^*\Theta_{(2)}^{\otimes k}) \\
                &= \binom{\chi(A, \Theta^{\otimes k}) + 1}{2} \\
                &= \binom{k^2 + 1}{2}, 
\end{align*}
as desired. 
\end{proof}
\begin{corollary}
    \[
    \dim V_k = 2(k^2 + 1).
    \]
\end{corollary}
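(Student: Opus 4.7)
The plan is to compute $\dim V_k$ by equating two expressions for the Euler characteristic of $(\cdot 2)^*B_k$ on $A$.

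First, I would observe that the multiplication-by-two map $(\cdot 2): A \to A$ is a finite étale cover of degree $2^{2\dim A} = 16$. Consequently, pulling back a coherent sheaf multiplies its Euler characteristic by $16$. Applied to $B_k$, and combined with Corollary \ref{first_chi_expression}, this gives
\[
\chi(A, (\cdot 2)^*B_k) = 16 \cdot \chi(A, B_k) = 16 \cdot \frac{k^2(k^2+1)}{2} = 8k^2(k^2+1).
\]

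Second, I would use the isomorphism $(\cdot 2)^*B_k \cong \Theta^{\otimes 2k} \otimes V_k$ established earlier, meaning that $(\cdot 2)^*B_k$ is isomorphic to the direct sum of $\dim V_k$ copies of $\Theta^{\otimes 2k}$. Since Euler characteristic is additive over direct sums, this gives
\[
\chi(A, (\cdot 2)^*B_k) = \dim V_k \cdot \chi(A, \Theta^{\otimes 2k}).
\]
Since $\Theta$ is a principal polarization on a surface, $\chi(A, \Theta^{\otimes n}) = \frac{(n\Theta)^2}{2!} = n^2$, so $\chi(A, \Theta^{\otimes 2k}) = 4k^2$.

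Equating the two expressions yields $8k^2(k^2+1) = 4k^2 \cdot \dim V_k$, and since $k > 0$ we may divide to obtain $\dim V_k = 2(k^2+1)$. There is no real obstacle here; the argument is a short Euler-characteristic comparison, with the ingredients (étale degree of $\cdot 2$, the product decomposition of $(\cdot 2)^*B_k$, and Corollary \ref{first_chi_expression}) already in place.
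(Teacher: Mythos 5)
Your proposal is correct and follows essentially the same route as the paper: both compare $\chi(A,(\cdot 2)^*B_k) = 16\,\chi(A,B_k)$ (using the degree-$16$ isogeny and Corollary \ref{first_chi_expression}) with the decomposition $(\cdot 2)^*B_k \cong \Theta^{\otimes 2k}\otimes V_k$, giving $4k^2\dim V_k = 8k^2(k^2+1)$. The only cosmetic difference is that the paper cites a reference for the multiplicativity of $\chi$ under the isogeny rather than invoking \'etaleness directly.
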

\begin{proof}
We calculate $\chi(A, B_k)$ another way. 
We have that $\chi(A, B_k) = \frac{1}{16}\chi(A, (\cdot 2)^*B_k)$ by \cite[Corollary 9.12]{vandergeer_abvar} since $\cdot 2: A\rightarrow A$ has degree 16. We may compute this via our Cartesian diagram to find 
\begin{align*}
    \chi(A, B_k) &= \frac{1}{16}\chi(A, (\cdot 2)^*B_k) \\  
                &= \frac{1}{16}\chi(A, \pi_{1*}\mu^*(\Theta_{(2)}^{\otimes k})) \\
                &= \frac{1}{16}\chi(A, \pi_{1*}(\pi_1^*\Theta^{\otimes 2k}\otimes \pi_2^*L_k)) \\
                &= \frac{1}{16}\chi(A, \Theta^{\otimes 2k}\otimes V_k) \\
                &= \frac{\dim V_k}{16}\chi(A, \Theta^{\otimes 2k}) \\
                &= \frac{k^2 \dim V_k}{4}.
\end{align*}
Equating this last expression with $\dfrac{k^2(k^2+1)}{2}$ from Corollary \ref{first_chi_expression} gives the desired equality. 
\end{proof}
We now conclude with our desired result:
\begin{proposition}
If $k>0$ and $2k+4\ell > 0$, then 
\[
    \dim H^0(A^{(2)}, \Theta_{(2)}^{\otimes k}\otimes \Sigma^*(\Theta^{\otimes \ell}\otimes L_0)) = \frac{(k^2 + 1)(k + 2\ell)^2}{2}
\]
for $L_0\in\Pic^0(A)$.
\begin{proof}
    As noted, this is the same as calculating the dimension of $H^0(A, \mathcal{F}_{k,\ell})$, and this is simply 
    $\chi(A, \mathcal{F}_{k,\ell})$ due to the vanishing of higher cohomology. 
    We find that 
    \begin{align*}
    \chi(A, \mathcal{F}_{k,\ell}) &= \frac{1}{16}\chi(A, (\cdot 2)^*\mathcal{F}_{k,\ell}) \\  
                                &= \frac{1}{16}\chi(A, (\cdot 2)^*(B_k\otimes (\Theta^{\otimes \ell}\otimes L_0))) \\
                                &= \frac{1}{16}\chi(A, \pi_{1*}\mu^*\Theta_{(2)}^{\otimes k}\otimes (\cdot 2)^*(\Theta^{\otimes \ell}\otimes L_0)) \\
                                &= \frac{1}{16}\chi(A, (\Theta^{\otimes 2k}\otimes V_k)\otimes \Theta^{\otimes 4\ell}\otimes L_0^{\otimes 2}) \\
                                &= \frac{\dim V_k}{16}\chi(A, \Theta^{\otimes 2k+4\ell}\otimes L_0^{\otimes 2}) \\
                                &= \frac{k^2 + 1}{8}(2k+4\ell)^2 \\
                                &= \frac{(k^2 + 1)(k + 2\ell)^2}{2},
    \end{align*}
    as desired. 
\end{proof}
\end{proposition}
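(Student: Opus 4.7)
The plan is to push everything forward to $A$, reduce to an Euler characteristic, and then exploit the Cartesian square from Subsection \ref{symm_prod_h0} to pull back through the degree $16$ isogeny $\cdot 2 : A \to A$ where the relevant bundles become a box product times a vector space.

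First I would apply push-pull along $\Sigma$: since $\Sigma$ has connected fibers and Lemma \ref{sigma_pushforward} gives $R^j\Sigma_*\Theta_{(2)}^{\otimes k} = 0$ for $j>0$, we get
\[
H^0(A^{(2)}, \Theta_{(2)}^{\otimes k} \otimes \Sigma^*(\Theta^{\otimes \ell} \otimes L_0)) \cong H^0(A, \mathcal{F}_{k,\ell}).
\]
By Lemma \ref{Fkl_cohomology} the higher cohomology of $\mathcal{F}_{k,\ell}$ on $A$ vanishes, so $\dim H^0(A, \mathcal{F}_{k,\ell}) = \chi(A, \mathcal{F}_{k,\ell})$. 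This reduces the problem to a Riemann--Roch calculation.

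Second, I would use that $\cdot 2 : A\to A$ has degree $16$, so $\chi(A, \mathcal{F}_{k,\ell}) = \tfrac{1}{16}\chi(A, (\cdot 2)^*\mathcal{F}_{k,\ell})$. From the Cartesian diagram and the computation $\mu^*\Theta_{(2)}^{\otimes k} \cong \pi_1^*\Theta^{\otimes 2k} \otimes \pi_2^*L_k$ we already established that $(\cdot 2)^*B_k \cong \Theta^{\otimes 2k}\otimes V_k$, and symmetry of $\Theta$ together with $L_0\in\Pic^0(A)$ gives $(\cdot 2)^*(\Theta^{\otimes \ell}\otimes L_0) \cong \Theta^{\otimes 4\ell}\otimes L_0^{\otimes 2}$. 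Combining these,
\[
(\cdot 2)^*\mathcal{F}_{k,\ell} \cong (\Theta^{\otimes 2k+4\ell}\otimes L_0^{\otimes 2})\otimes V_k,
\]
so $\chi(A, (\cdot 2)^*\mathcal{F}_{k,\ell}) = \dim V_k \cdot \chi(A, \Theta^{\otimes 2k+4\ell}\otimes L_0^{\otimes 2})$.

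Third, I would evaluate the two factors. Since $\Theta$ is a principal polarization, $\Theta^2 = 2$, and twisting by a numerically trivial $L_0^{\otimes 2}$ does not change Euler characteristic, so
\[
\chi(A, \Theta^{\otimes 2k+4\ell}\otimes L_0^{\otimes 2}) = \tfrac{(2k+4\ell)^2}{2}\cdot 2 = (2k+4\ell)^2 = 4(k+2\ell)^2.
\]
Plugging in $\dim V_k = 2(k^2+1)$ from the preceding corollary gives
\[
\chi(A, \mathcal{F}_{k,\ell}) = \tfrac{1}{16}\cdot 2(k^2+1)\cdot 4(k+2\ell)^2 = \tfrac{(k^2+1)(k+2\ell)^2}{2},
\]
which is the desired formula. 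The substantive inputs are Lemma \ref{Fkl_cohomology}, Lemma \ref{sigma_pushforward}, and the identification of $V_k$; the final assembly is a short Riemann--Roch bookkeeping on the abelian surface $A$, so no step here is expected to pose a real obstacle beyond carefully tracking the pullbacks through the Cartesian square.
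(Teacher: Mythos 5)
Your proposal is correct and follows essentially the same route as the paper: reduce to $\chi(A,\mathcal{F}_{k,\ell})$ via the pushforward along $\Sigma$ and the vanishing of higher cohomology, then compute the Euler characteristic by pulling back through the degree-$16$ isogeny $\cdot 2$ and using $(\cdot 2)^*\mathcal{F}_{k,\ell}\cong(\Theta^{\otimes 2k+4\ell}\otimes L_0^{\otimes 2})\otimes V_k$ together with $\dim V_k = 2(k^2+1)$. The arithmetic checks out and no step differs in substance from the paper's argument.
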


\subsection{Dimensions of global sections on the Hilbert square}\label{global_sections_hilb2}\hfill\\
Our formulas in the previous sections show that if $L_0\in\Pic^0(A^{[2]})$ is not a 2-torsion line bundle, then the line bundle 
$\Theta_{[2]}^{\otimes 2k}\otimes \Sigma^*\Theta^{-\otimes k}\otimes L_0$ on $A^{[2]}$ has no global sections, for $\Theta$ a symmetric 
principal polarization. However, when $L_0$ is 2-torsion we can get nonzero global sections. We can go further and give bounds for 
the dimension of
\[
    H^0(A^{[2]}, \Theta_{[2]}^{\otimes 2k} \otimes \Sigma^*\Theta^{\otimes -k} \otimes \O_{A^{[2]}}(-mB))
\]
where $k,m$ are a positive integers. Note that the pushforward $\pi_*\O_{A^{[2]}}(-mB)$ is not trivial but rather a power of the ideal sheaf 
of the image, which will nontrivially impact the number of global sections. For simplicity we will only consider the case where 
$L_0 \cong \O_{A^{[2]}}$ and $\Theta$ is the symmetric principal polarization given by the vanishing of the Riemann theta function, 
which we will shortly explain more concretely. These assumptions will suffice for our proof. 
One could devise similar formulas for the general case by considering theta functions with characteristics \cite[Exercise 4.11]{BL_complex_abvar}. We only require some of the results of this section to prove our main result, but include the full formulas as we find the calculations interesting. 

Consider the difference map $d:A\times A\rightarrow A$, $d(x,y) = x-y$. It is easy to check by the 
theorem of the square that $d^*L \cong (L\boxtimes L)^{\otimes 2}\otimes\Sigma^*L^{\otimes -1}$ for any symmetric line bundle $L$ on $A$. 
We extend our usual Cartesian blowup square with $d$: 
\begin{center}
\begin{tikzcd}
Bl_{\Delta}A^2 \arrow[d, "q"'] \arrow[r, "\hat{\pi}"] & A^2 \arrow[d, "p"] \arrow[r, "d"] & A \\
{A^{[2]}} \arrow[r, "\pi"]                            & A^{(2)}                           &  
\end{tikzcd}
\end{center}
Set $T_k = \Theta_{(2)}^{\otimes 2k}\otimes\Sigma^*\Theta^{\otimes -k}$ on $A^{(2)}$ for simplicity. By our formula 
for $d^*L$ we see that $p^*T_k \cong d^*\Theta^{\otimes k}$.
We may calculate global sections on $A^{[2]}$ by pulling back 
to $Bl_{\Delta}A^2$ and taking $\mathfrak{S}_2$-invariants by formula \ref{S2_invar_sections_no_pullback}, so that 
\begin{align*}
    H^0(A^{[2]}, \Theta_{[2]}^{\otimes 2k} \otimes \Sigma^*\Theta^{\otimes -k} \otimes \O_{A^{[2]}}(-mB)) &\cong 
        H^0(A^{[2]}, \pi^*T_k\otimes\O_{A^{[2]}}(-mB)) \\
        &\cong H^0(Bl_{\Delta}A^2, q^*(\pi^*T_k\otimes\O_{A^{[2]}}(-mB)))^{\mathfrak{S}_2} \\
        &\cong H^0(Bl_{\Delta}A^2, \hat{\pi}^*p^*T_k\otimes q^*\O_{A^{[2]}}(-mB))^{\mathfrak{S}_2} \\
        &\cong H^0(Bl_{\Delta}A^2, \hat{\pi}^*d^*\Theta^{\otimes k} \otimes \O_{Bl_{\Delta}A^2}(-mF))^{\mathfrak{S}_2}
\end{align*}
where $F$ is the exceptional divisor on $Bl_{\Delta}A^2$.
Since $d(y,x) = -d(x,y)$, we have a commutative diagram 
\begin{center}
\begin{tikzcd}
Bl_{\Delta}A^2 \arrow[r, "\hat{\pi}"] \arrow[d, "\hat{\sigma}"] & A^2 \arrow[r, "d"] \arrow[d, "\sigma"] & A \arrow[d, "-1"] \\
Bl_{\Delta}A^2 \arrow[r, "\hat{\pi}"]                           & A^2 \arrow[r, "d"]                     & A                
\end{tikzcd}
\end{center}
where $\sigma, \hat{\sigma}$ are the usual involutions on $A^2, Bl_{\Delta}A^2$ and $-1$ is multiplication by $-1$ on $A$. Thus, in the 
equality 
\[
    H^0(Bl_{\Delta}A^2, \hat{\pi}^*d^*\Theta^{\otimes k} \otimes \O_{Bl_{\Delta}A^2}(-mF))\cong H^0(A, \Theta^{\otimes k}\otimes(d\circ\hat{\pi})_*\O_{Bl_{\Delta}A^2}(-mF))
\]
we may take $\mathfrak{S}_2$-invariants on both sides, where the action on the left is by swapping factors of $A^2$ and on the right we 
multiply $A$ by $-1$. It remains only to describe the pushforward $(d\circ\hat{\pi})_*\O_{Bl_{\Delta}A^2}(-mF)$. 
Since $\hat{\pi}:Bl_{\Delta}A^2\rightarrow A^2$ is a blowup of smooth varieties, 
\[
    \hat{\pi}_*\O_{Bl_{\Delta}A^2}(-mF) \cong \mathcal{I}_{\Delta}^{\otimes m},
\]
where $\mathcal{I}_{\Delta}$ is the ideal sheaf of the diagonal in $A^2$. The map $d:A^2\rightarrow A$ 
is a surjective map of smooth projective varieties with equidimensional fibers (they are all isomorphic to $A$ by calculation), so 
$d$ is flat by miracle flatness. The inverse image of $0\in A$ is just the diagonal $\Delta\subset A^2$, so by flatness the 
inverse image ideal sheaf $\mathcal{I}_{\Delta}\cong d^{-1}(\mathcal{I}_0)\cdot\O_{A^2}$ coincides with the pullback $d^*\mathcal{I}_0$. 
We conclude that 
\begin{align*}
H^0(A^{[2]}, \Theta_{[2]}^{\otimes 2k} \otimes \Sigma^*\Theta^{\otimes -k} \otimes \O_{A^{[2]}}(-mB)) &\cong
    H^0(Bl_{\Delta}A^2, \hat{\pi}^*d^*\Theta^{\otimes k} \otimes \O_{Bl_{\Delta}A^2}(-mF))^{\mathfrak{S}_2} \\
    &\cong H^0(A, \Theta^{\otimes k}\otimes(d\circ\hat{\pi})_*\O_{Bl_{\Delta}A^2}(-mF))^+ \\
    &\cong H^0(A, \Theta^{\otimes k}\otimes\mathcal{I}_0^{\otimes m})^+
\end{align*}
where we write $+$ for the sections invariant under pullback by $\pm 1$ on the $A$ side. This is the space of even theta functions of weight $k$ which vanish to 
order $m$ at 0. Though we do not have exact formulas, we can estimate the number of these. 

\begin{remark}\label{twisted_difference_formula}
The relevance of $L_0$ being 2-torsion or not in our previous formula now makes sense: 
if $L_0$ is not 2-torsion then $(-1)^*(\Theta\otimes L_0) \cong \Theta\otimes L_0^{\otimes -1} \neq \Theta\otimes L_0$, so that 
we cannot even define the action of $\pm 1$ on $H^0(A, \Theta\otimes L_0)$. If $L_0\in\Pic^0(A)$ is 2-torsion, then 
\begin{align*}
d^*(\Theta^{\otimes k}\otimes L_0) &\cong (\Theta^{\otimes 2k}\otimes L_0^{\otimes 2})\boxtimes(\Theta^{\otimes 2k}\otimes L_0^{\otimes 2})\otimes\Sigma^*(\Theta^{\otimes -k}\otimes L_0^{\otimes -1}) \\
    &\cong (\Theta\boxtimes\Theta)^{\otimes 2k}\otimes\Sigma^*(\Theta^{\otimes -k}\otimes L_0), 
\end{align*}
so that if $L_0$ is an arbitrary 2-torsion line bundle in $\Pic^0(A)$ (equivalently $\Pic^0(A^{[2]})$) then 
\[
H^0(A^{[2]}, \Theta_{[2]}^{\otimes 2k}\otimes\Sigma^*\Theta^{\otimes -k}\otimes\O_{A^{[2]}}(-mB)\otimes\Sigma^*L_0) \cong 
    H^0(A, \Theta^{\otimes k}\otimes \mathcal{I}_0^{\otimes m}\otimes L_0)^+
\]
as well.
\end{remark}
Our work that follows here consists of well-known results and is very similar to the proofs of \cite[Corollary 4.6.6]{BL_complex_abvar} and copies the work in Section 15.6 of the same. 
Suppose that $A = \C^g/\Lambda$ is a principally polarized abelian variety defined by a matrix $\tau$ in the Siegel upper half space, 
such that the principal polarization $\Theta$ has the Riemann theta function
\[
    \theta(z) = \sum_{n\in \Z^g}\exp(2\pi i n^t z + \pi i n^t \tau n)
\] 
as a section (where we view $n, z$ as columns vectors so that the transposes make sense to yield scalars). The weight $k$ theta 
functions $H^0(A, \Theta^{\otimes k})$ have as basis the $k^g$ functions of the form
\[
    \theta_{\epsilon}(z) = \sum_{n\equiv\epsilon\pmod{(k\Z)^g}} \exp\left(2\pi i n^t z + \frac{\pi}{k} i n^t \tau n\right)
\]
where $\epsilon$ is a vector in $(\Z/k\Z)^g$ \cite[equation 3.1]{beauville2013theta}. We see that this sum is unchanged if we replace $z$ with $-z$ and $n$ with $-n$, so that 
$\theta_{\epsilon}(z) = \theta_{-\epsilon}(-z)$ for all $z, \epsilon$. Thus, if $s(z) = \sum_{\epsilon}a_{\epsilon}\theta_{\epsilon}(z)$ 
satisfies $s(z) = s(-z)$, then $a_{\epsilon} = a_{-\epsilon}$ for all $\epsilon$, and hence the space $H^0(A, \Theta^{\otimes k})^+$ 
of even weight $k$ theta functions is spanned by the orbit vectors $\theta_{\epsilon} + \theta_{-\epsilon}$, 
or simply $\theta_{\epsilon}$ if $2\epsilon \equiv 0 \pmod{(k\Z)^g}$. 
It is easy to check that this collection is linearly independent as well, so that to determine 
the dimension of $H^0(A, \Theta^{\otimes k})^+$ we simply need to count the number of these orbit vectors. 
Set $X$ to be the set $(\Z/k\Z)^g$, so that this number of distinct orbit vectors is simply the sum $|X^{fixed}| + \frac{1}{2}|X^{mov}|$, where 
$X^{fixed}$ consists of those $\epsilon$ modulo $(k\Z)^g$ unchanged by multiplying by $-1$, and $X^{mov}$ consists of those $\epsilon$ 
that are not fixed. If $k$ is odd then the only fixed $\epsilon$ is the zero vector, and if $k$ is even then in each coordinate one 
may choose either $0$ or $\frac{k}{2}$ to get a fixed vector, yielding $2^g$ fixed $\epsilon$. Since $X^{mov}$ is the complement of 
$X^{fixed}$ and $|X| = k^g$, we see that 
\begin{formula}\label{even_theta_formula}
\[
\dim H^0(A, \Theta^{\otimes k})^+ =
    \begin{cases}
        \frac{1}{2}(k^g + 2^g), & \text{if } k \equiv 0 \pmod{2} \\
        \frac{1}{2}(k^g + 1), & \text{if } k \equiv 1 \pmod{2}
    \end{cases}.
\]
\end{formula}

We now incorporate the order of vanishing at 0, and return to the case $g = 2$ of abelian surfaces for simplicity. 
Choosing local complex coordinates 
$z_1, z_2$ about 0, if $s(z) = s(-z)$ then all of the terms of odd degree in the Taylor expansion of $s$ about 0 will 
vanish automatically. For $s$ to vanish at 0 up to order $m$ amounts to the terms of degree $\le m-1$ all vanishing in the Taylor 
expansion, so that if $s$ is even then its order of vanishing will automatically be even as well (if it vanishes up to odd order $m$ 
then it automatically vanishes to even order $m+1$). In dimension 2 there are $d+1$ Taylor terms of degree $d$ 
($z_1^d, z_1^{d-1}z_2, ..., z_2^d$). Write $\mathcal{I}_0^{\otimes \ell}$ for the ideal sheaf of functions vanishing to order $\ell$ 
at $0\in A$, so that we wish to estimate the dimension of $H^0(A, \Theta^{\otimes k}\otimes\mathcal{I}_0^{\otimes 2m})^+$ for integers 
$k, m$ (we put $2m$ since the order of vanishing is automatically even for even theta functions as discussed). Since we only care about 
the vanishing of the even Taylor series terms, vanishing to order $2m$ imposes 
\[
    1 + 3 + 5 + ... + (2m - 1) = m^2
\]
linear conditions on the vector space $H^0(A, \Theta^{\otimes k})^+$. Some of these linear conditions may be redundant, giving a 
lower bound 
\[
    \dim H^0(A, \Theta^{\otimes k}\otimes\mathcal{I}_0^{\otimes 2m})^+ \ge \dim H^0(A, \Theta^{\otimes k})^+ - m^2. 
\]
We may plug in our formula \ref{even_theta_formula} to compute the latter term. Putting everything together, we conclude the following:
\begin{proposition}\label{h0_2k_estimate}
For $m \ge 0$, 
\[
    \dim H^0(A^{[2]}, \Theta_{[2]}^{\otimes 2k}\otimes\Sigma^*\Theta^{\otimes -k}\otimes\O_{A^{[2]}}(-(2m-1)B)) =
    \dim H^0(A^{[2]}, \Theta_{[2]}^{\otimes 2k}\otimes\Sigma^*\Theta^{\otimes -k}\otimes\O_{A^{[2]}}(-2mB)),
\]
and 
\[
    \dim H^0(A^{[2]}, \Theta_{[2]}^{\otimes 2k}\otimes\Sigma^*\Theta^{\otimes -k}\otimes\O_{A^{[2]}}(-2mB)) \ge 
        \frac{1}{2}(k^2 + 4) - m^2 
\]
for even $k$ and 
\[
    \dim H^0(A^{[2]}, \Theta_{[2]}^{\otimes 2k}\otimes\Sigma^*\Theta^{\otimes -k}\otimes\O_{A^{[2]}}(-2mB)) \ge 
        \frac{1}{2}(k^2 + 1) - m^2 
\]
for odd $k$, for our given choice of symmetric principal polarization $\Theta$. If $m=0$ this is an equality. 
\end{proposition}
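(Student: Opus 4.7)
The plan is to translate the computation on $A^{[2]}$ into one about even theta functions on $A$ via the isomorphism
\[
H^0(A^{[2]}, \Theta_{[2]}^{\otimes 2k}\otimes\Sigma^*\Theta^{\otimes -k}\otimes\O_{A^{[2]}}(-mB)) \cong H^0(A, \Theta^{\otimes k}\otimes \mathcal{I}_0^{\otimes m})^+
\]
established immediately above the proposition (valid for $m\ge 0$), and then to count linear conditions on the finite-dimensional space $H^0(A,\Theta^{\otimes k})^+$ whose dimension has already been recorded in formula \ref{even_theta_formula}.

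For the equality between the $-(2m-1)B$ and $-2mB$ cases with $m\ge 1$, both sides are identified respectively with $H^0(A,\Theta^{\otimes k}\otimes\mathcal{I}_0^{\otimes(2m-1)})^+$ and $H^0(A,\Theta^{\otimes k}\otimes\mathcal{I}_0^{\otimes 2m})^+$. In local complex coordinates $z_1,z_2$ at $0\in A$, an even section satisfies $s(z)=s(-z)$, so its Taylor expansion contains only monomials of even total degree, and its order of vanishing at $0$ is automatically even. Hence any such section vanishing to order $2m-1$ must in fact vanish to order $2m$, yielding the first equality. When $m=0$ the claim reduces to $\dim H^0(\cdots\otimes\O_{A^{[2]}}(B)) = \dim H^0(\cdots)$, which is immediate from formula \ref{pushforward_formula}.

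For the lower bound I would count the linear conditions imposed by vanishing to order $2m$ on the subspace $H^0(A,\Theta^{\otimes k})^+$. The odd-degree Taylor components vanish automatically on this subspace, so the constraints reduce to forcing each even-degree homogeneous piece of degrees $0,2,\ldots,2m-2$ to be zero; in two complex variables these contribute $1+3+\cdots+(2m-1)=m^2$ scalar linear conditions. Each one drops the dimension by at most one, so
\[
\dim H^0(A,\Theta^{\otimes k}\otimes\mathcal{I}_0^{\otimes 2m})^+ \ge \dim H^0(A,\Theta^{\otimes k})^+ - m^2,
\]
and substituting formula \ref{even_theta_formula} in the two parity cases for $k$ gives the claimed bounds. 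For $m=0$ no conditions are imposed, so equality holds.

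The only real obstacle is that the $m^2$ linear conditions need not be independent, so the resulting bound cannot in general be promoted to an equality by this argument alone. Getting an exact formula would require proving surjectivity of the jet-evaluation map from $H^0(A,\Theta^{\otimes k})^+$ onto the space of even Taylor jets up to order $2m-1$ at $0$, a more delicate question involving theta functions with characteristics. Since the lower bound already suffices for the intended application in the proof of Theorem \ref{main_theorem}, I would stop at the inequality.
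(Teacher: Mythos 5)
Your proposal is correct and follows essentially the same route as the paper: reduce to even theta functions on $A$ via the difference-map isomorphism, use evenness to force the order of vanishing at $0$ to be even (giving the first equality), and count the $m^2$ linear conditions imposed on $H^0(A,\Theta^{\otimes k})^+$ by the even-degree Taylor jets to get the lower bound from formula \ref{even_theta_formula}. Your added remarks on the $m=0$ edge case and on why the bound cannot be upgraded to an equality without a jet-surjectivity statement match the paper's treatment.
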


\subsection{Kummer K3 surfaces}\label{kummer_k3}\hfill\\
We will need to study divisors on the Kummer K3 surface associated to $A$.

Let $Km(A)$ denote the Kummer K3 surface associated to $A$, which we may view as the fiber over 0 of the summation morphism 
$\Sigma:A^{[2]}\rightarrow A$, or equivalently as the blowup of the singular surface $A/\pm 1$ at the 16 singular points corresponding 
to the 2-torsion points of $A$. We may define a map 
\[
    \tilde{\mu}:A\times Km(A)\rightarrow A^{[2]}
\]
by sending a pair $(a, \mathcal{Z})$ to the natural translation $t_a^{[2]}(\mathcal{Z})$. 
This map fits into the commutative diagram
\begin{equation}\label{mu_square}
\begin{tikzcd}
A\times Km(A) \arrow[r, "\tilde{\mu}"] \arrow[d, "\pi_1"'] & {A^{[2]}} \arrow[d, "\Sigma"] \\
A \arrow[r, "\cdot 2"]                         & A                            
\end{tikzcd}
\end{equation}
which is Cartesian as noted by \cite[footnote 2]{Beauville1983VaritsKD}. Thus, $\tilde{\mu}$ has degree 16 since $(\cdot 2)$ does.

Since $\tilde{\mu}$ is a map of smooth projective varieties with equidimensional fibers, 
it is flat by miracle flatness. 
Viewing $Km(A)$ as the blowup of $A/\pm 1$ at the 16 singular 2-torsion points gives that $\tilde{\mu}$ is simply the functorial extension 
of our previously-defined $\mu: A\times (A/\pm 1) \rightarrow A^{(2)}$ to the blowup. To elaborate, tracing out the definitions shows that the diagram 
\begin{equation}\label{descent_square}
\begin{tikzcd}
A\times Km(A) \arrow[r, "\tilde{\mu}"] \arrow[d, "\pi_1\times b"'] & {A^{[2]}} \arrow[d, "\pi"] \\
A\times (A/\pm 1) \arrow[r, "\mu"]                               & A^{(2)}                   
\end{tikzcd}
\end{equation}
commutes, by identifying a pair of points $(x,y)\in A^{(2)}$ whose sum $x+y$ is zero with the equivalence class 
$x\sim -x \in A/\pm 1$. The preimage of the diagonal of $A^{(2)}$ under $\mu$ is simply $A\times A[2]\subset A\times A/\pm 1$ (that is, 
consists of any point in the first factor and a singular point of $A/\pm 1$ corresponding to a 2-torsion point of $A$ 
in the second factor), so that by the universal property of blowups there exists a unique morphism $f:A\times Km(A)\rightarrow A^{[2]}$ 
such that the diagram
\begin{center}
\begin{tikzcd}
A\times Km(A) \arrow[r, "f"] \arrow[d, "\pi_1\times b"'] & {A^{[2]}} \arrow[d, "\pi"] \\
A\times (A/\pm 1) \arrow[r, "\mu"]                               & A^{(2)}                   
\end{tikzcd}
\end{center}
commutes, so that this induced $f$ agrees with $\tilde{\mu}$. 

Recall that the second cohomology $H^2(Km(A), \Z)$ contains a factor isomorphic to $H^2(A, \Z)$ with double the original intersection 
form, and this factor is the orthogonal complement to the sublattice of $H^2(Km(A), \Z)$ rationally generated by the classes $E_i$ of the 
16 exceptional divisors \cite[Section 2]{garbagnati2016kummer}. 
Moreover, this decomposition holds when restricting to the Picard lattice of $Km(A)$, and the map 
$\tau: H^2(A, \Z) \hookrightarrow H^2(Km(A), \Z)$ is induced via pullback and pushforward along the commutative blowup diagram
\begin{center}
\begin{tikzcd}
{Bl_{A[2]}A} \arrow[d, "\hat{b}"'] \arrow[r, "\hat{q}"] & Km(A) \arrow[d, "b"] \\
A \arrow[r, "q"]                 & A/\pm 1             
\end{tikzcd}
\end{center}
\cite[Section VIII.5]{barth2015compact}. When $A$ has Picard rank 1 with ample generator $\Theta$, the Picard lattice 
is of rank 17, containing an ample class we denote $H = \tau(\Theta)$ such that $H^2 = 2\Theta^2$.

We wish to calculate the pullback under $\tilde{\mu}$ of an arbitrary line bundle on $A^{[2]}$.
\begin{proposition}\label{kummer_k3_pullback}
Set $L$ = $\Theta_{[2]}^{\otimes k}\otimes\Sigma^*\Theta^{\otimes \ell}\otimes\O_{A^{[2]}}(mB)\otimes \Sigma^*L_0$
for integers $k,\ell,m$ and $L_0\in\Pic^0(A)$. 

Then
\[
\tilde{\mu}^*L \cong (\Theta^{\otimes 2k+4\ell}\otimes L_0^{\otimes 2})\boxtimes \O_{Km(A)}\left(kH + \frac{m}{2}\sum_{i=1}^{16}E_i\right).
\]
\end{proposition}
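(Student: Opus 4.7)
The plan is to decompose $L$ into its four tensor factors $\Theta_{[2]}^{\otimes k}$, $\Sigma^*\Theta^{\otimes \ell}$, $\Sigma^*L_0$, and $\O_{A^{[2]}}(mB)$, compute $\tilde{\mu}^*$ on each, and then reassemble. The three factors involving $\Sigma^*$ or $B$ yield directly to the Cartesian square (\ref{mu_square}), while the $\Theta_{[2]}^{\otimes k}$ factor routes through the commutative square (\ref{descent_square}) and uses the identification $\mu^*\Theta_{(2)}^{\otimes k}\cong\pi_1^*\Theta^{\otimes 2k}\otimes\pi_2^*L_k$ already established in Subsection \ref{symm_prod_h0}.

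For the $\Sigma^*$ factors, commutativity of (\ref{mu_square}) yields $\tilde{\mu}^*\Sigma^* = \pi_1^*(\cdot 2)^*$. Applied to $\Theta^{\otimes\ell}$, symmetry of $\Theta$ gives $(\cdot 2)^*\Theta\cong\Theta^{\otimes 4}$ and so the contribution $\pi_1^*\Theta^{\otimes 4\ell}$; applied to $L_0\in\Pic^0(A)$, one obtains $\pi_1^*L_0^{\otimes 2}$. For the $\O(mB)$ factor, I would first note that since $\cdot 2\colon A\to A$ is étale and (\ref{mu_square}) is Cartesian, $\tilde{\mu}$ is itself étale. The set-theoretic preimage $\tilde{\mu}^{-1}(E)$ consists of those $(a,\mathcal{Z})$ for which $\mathcal{Z}$ is nonreduced; but a nonreduced $\mathcal{Z}\in Km(A)$ must be supported at a $2$-torsion point since $\Sigma(\mathcal{Z})=0$, so $\tilde{\mu}^{-1}(E) = A\times\bigcup_{i=1}^{16}E_i$. Étaleness rules out any multiplicity factor, giving $\tilde{\mu}^*E = \pi_2^*\bigl(\sum_{i=1}^{16}E_i\bigr)$ and thus $\tilde{\mu}^*\O(mB)\cong\pi_2^*\O\bigl(\tfrac{m}{2}\sum E_i\bigr)$.

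For the $\Theta_{[2]}^{\otimes k}$ factor, commutativity of (\ref{descent_square}) gives $\tilde{\mu}^*\Theta_{[2]}^{\otimes k} = (\pi_1\times b)^*\mu^*\Theta_{(2)}^{\otimes k}\cong \pi_1^*\Theta^{\otimes 2k}\otimes\pi_2^*b^*L_k$. The hard step is identifying $b^*L_k$ with $\O_{Km(A)}(kH)$. Pulling back further through $\hat{q}\colon Bl_{A[2]}A\to Km(A)$ and $\hat{b}\colon Bl_{A[2]}A\to A$, one obtains $\hat{q}^*b^*L_k\cong\hat{b}^*q^*L_k\cong\hat{b}^*\Theta^{\otimes 2k}$. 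Since $H=\tau(\Theta)$ is defined by pullback-pushforward so that $\hat{q}^*H\sim 2\hat{b}^*\Theta$ in divisor classes, this reads $\hat{q}^*b^*L_k\cong\hat{q}^*\O(kH)$; the identity then descends, and the residual $2$-torsion ambiguity in the kernel of $\hat{q}^*$ is pinned down by the explicit construction of $L_k$ as the eigenfactor of $\mu^*\Theta_{(2)}^{\otimes k}$ together with the normalization built into $\tau$.

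Multiplying the four contributions recovers the stated formula $(\Theta^{\otimes 2k+4\ell}\otimes L_0^{\otimes 2})\boxtimes\O_{Km(A)}\bigl(kH+\tfrac{m}{2}\sum_{i=1}^{16}E_i\bigr)$. The principal technical hurdle is the precise matching $b^*L_k\cong\O(kH)$, which is the one place where Kummer-lattice geometry enters in a nontrivial way; every other step is a diagram chase combining (\ref{mu_square}), (\ref{descent_square}), and the formulas from the preliminaries.
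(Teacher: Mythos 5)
Your proposal is correct and follows essentially the same route as the paper: the same decomposition into the four tensor factors, the same use of the squares (\ref{mu_square}) and (\ref{descent_square}) together with the identification $\mu^*\Theta_{(2)}^{\otimes k}\cong\pi_1^*\Theta^{\otimes 2k}\otimes\pi_2^*L_k$ and $q^*L_k\cong\Theta^{\otimes 2k}$ from Subsection \ref{symm_prod_h0}, and the same determination of $\tilde{\mu}^{-1}(E)$ (your observation that $\tilde{\mu}$ is \'etale, being a base change of $\cdot 2$, is a slightly cleaner way to get multiplicity one than the paper's appeal to flatness). The only cosmetic difference is at $b^*L_k\cong\O_{Km(A)}(kH)$, where the paper uses $\hat{q}_*\hat{q}^*=2$ and divides by two while you descend along $\hat{q}^*$; the ``residual $2$-torsion ambiguity'' you worry about is vacuous, since any such ambiguity would be a torsion class in $\Pic(Km(A))$, which is torsion-free as $Km(A)$ is a K3 surface (the same fact that licenses the paper's division by two).
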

\begin{proof}
Using the notation and work from Subsection \ref{symm_prod_h0}, we know that 
\begin{align*}
    \tilde{\mu}^*L &\cong (\pi_1\times b)^*\mu^*(\Theta_{(2)}^{\otimes k}\otimes \Sigma^*\Theta^{\otimes \ell}\otimes \Sigma^*L_0)\otimes\tilde{\mu}^*\O_{A^{[2]}}(mB) \\
        &\cong ((\Theta^{\otimes 2k+4\ell}\otimes L_0^{\otimes 2})\boxtimes b^*L_k)\otimes \tilde{\mu}^*\O_{A^{[2]}}(mB).
\end{align*}
We thus need to calculate $b^*L_k$ and $\tilde{\mu}^*\O_{A^{[2]}}(mB)$. 
For any divisor $D$ on $A/\pm 1$ we see that $\tau(q^*D) = 2b^*D$ since 
\begin{align*}
\tau(q^*D) &= \hat{q}_*(\hat{b}^*q^*)(D) \\
    &= \hat{q}_*(\hat{q}^*b^*)(D) \\
    &= 2b^*D
\end{align*}
as $\hat{q}:Bl_{A[2]}A\rightarrow Km(A)$ is generically of degree two. 
We may thus calculate $b^*L_k$ by looking at the pullback to $A$ and dividing 
by two. From the original commutative diagram 
\begin{center}
\begin{tikzcd}
A\times A \arrow[r, "\xi"] \arrow[d, "\pi_1\times q"'] & A\times A \arrow[d, "p"] \\
A\times (A/\pm 1) \arrow[r, "\mu"]                       & A^{(2)}                 
\end{tikzcd}
\end{center}
where $\xi:A\times A\rightarrow A\times A$ is the Wirtinger map we see that $q^*L_k$ is just the second factor of 
$\xi^*(\Theta\boxtimes\Theta)^{\otimes k} = \Theta^{\otimes 2k}\boxtimes \Theta^{\otimes 2k}$ on $A\times A$, so that 
\[
b^*L_k \cong \tau(\Theta^{\otimes k}) = \O_{Km(A)}(kH).
\] 

To calculate $\tilde{\mu}^*\O_{A^{[2]}}(mB)$, we need 
to determine the preimage of the exceptional divisor $E = 2B$ under $\tilde{\mu}$. Since any natural automorphism preserves the 
multiplicity structure of a subscheme, we see that $\tilde{\mu}(a,\mathcal{Z}) = t_a^{[2]}(\mathcal{Z})$ lies in $E$ if and only if 
$\mathcal{Z}$ is a subscheme supported at a single point with multiplicity 2 (with no condition on the point $a$ in the first factor), 
so that $\tilde{\mu}^*E$ consists of the 16 exceptional divisors $\sum_{i=1}^{16} E_i \in \Pic(A\times Km(A))$, 
and hence $\tilde{\mu}^*B = \frac{1}{2}\sum_{i=1}^{16}E_i$ (since $\tilde{\mu}$ is flat, we may identify pullbacks of line bundles with preimages of divisors). We thus conclude that 
\[
    \tilde{\mu}^*\O_{A^{[2]}}(mB) \cong \O_A\boxtimes\O_{Km(A)}\left(\frac{m}{2} + \sum_{i=1}^{16}E_i\right). 
\]
Putting everything together, we find that 
\begin{align*}
\tilde{\mu}^*L &\cong ((\Theta^{\otimes 2k+4\ell}\otimes L_0^{\otimes 2})\boxtimes b^*L_k)\otimes\tilde{\mu}^*\O_{A^{[2]}}(mB) \\
            &\cong (\Theta^{\otimes 2k+4\ell}\otimes L_0^{\otimes 2})\boxtimes \O_{Km(A)}\left(kH + \frac{m}{2}\sum_{i=1}^{16}E_i\right),
\end{align*}
as desired. 
\end{proof}\noindent

\subsubsection{Automorphisms of Jacobian Kummer K3 surfaces}\hfill\\
The preceding discussion did not require $A$ to have a specific polarization type, and can clearly be extended 
to abelian surfaces of higher Picard rank. We now suppose that $A$ is principally polarized, so that $Km(A)$ is a \textit{Jacobian} 
Kummer surface. Equivalently, $Km(A)$ is the minimal resolution of a quartic surface in $\P^3$ with 16 nodes. Given a principally 
polarized abelian surface $(A,\Theta)$, the image of linear series $|2\Theta|$ is precisely this singular quartic surface in $\P^3$. 

The automorphisms of Jacobian Kummer surfaces have been studied since the 19th century, and completely classified in the generic 
Picard rank 17 case by Kondo \cite{kondo1997automorphism}. There, Kondo shows that $\Aut(Km(A))$ is generated by various classically 
known automorphisms, along with 192 automorphisms of infinite order constructed by Keum \cite{keum1997automorphisms}. One of these 
classical automorphisms is the \textit{switch} involution $\sigma$ on $Km(A)$, which arises from the fact that the singular 
quartic Kummer surface in $\P^3$ is birational to its projective dual surface in $\P^3$. Since K3 surfaces are minimal, this birational 
involution lifts to an isomorphism of $Km(A)$. The following is known:
\begin{lemma}\label{switch_action_lemma}
    \[\sigma^*H = 3H - \sum_{i=1}^{16}E_i\] 
    and 
\[
    \sigma^*\left(\frac{1}{2}\sum_{i=1}^{16}E_i\right) = 4H - \frac{3}{2}\sum_{i=1}^{16}E_i
\]
in the Picard lattice of $Km(A)$. 
\end{lemma}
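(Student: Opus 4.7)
The plan is to exploit the classical description of the switch involution $\sigma$ as arising from the projective self-duality of the Kummer quartic $K\subset\P^3$, combined with the $(16_6, 16_6)$ incidence between its nodes and tropes. Recall that $H$ is the pullback to $Km(A)$ of the hyperplane class of $K$, so $H^2=4$, $H\cdot E_i=0$, and $E_i\cdot E_j=-2\delta_{ij}$.

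First I would prove $\sigma^*H = 3H - \sum_{i=1}^{16}E_i$ via the Gauss map. The involution $\sigma$ is obtained by composing the Gauss map $\gamma: K\dashrightarrow K^*\subset(\P^3)^*$ with the projective isomorphism $K^*\cong K$ coming from self-duality of the Kummer quartic (equivalently, from the principal polarization $A\cong A^\vee$). In coordinates $\gamma$ is $(F_{x_0}:F_{x_1}:F_{x_2}:F_{x_3})$, where $F$ is the defining quartic, and the common zero locus on $K$ of these four cubics is precisely the $16$ nodes, each with multiplicity one because the nodes are ordinary double points with rank-$3$ tangent cone. Resolving on $Km(A)$, the rational map becomes a morphism given by the complete linear system $|3H-\sum E_i|$, and pulling back the hyperplane class of the target yields $\sigma^*H = 3H-\sum E_i$. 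One checks $(3H-\sum E_i)^2 = 36-32 = 4 = H^2$, as required for $\sigma^*$ to be an isometry.

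For the second identity I would use the fact that $\sigma$ exchanges the $16$ nodes and the $16$ tropes. Each trope $T_j\subset K$ is a smooth conic lying in a plane $\Pi_j$ tangent to $K$ along $T_j$, so $\Pi_j\cap K = 2T_j$, and classically $T_j$ passes through exactly $6$ of the $16$ nodes; dually, each node lies on exactly $6$ tropes. On $Km(A)$ the proper transform $\tilde{T}_j$ is a smooth $(-2)$-curve, and intersecting against $H$ and the $E_k$ determines its class as $\tilde{T}_j = \tfrac{1}{2}(H - \sum_{k\in I_j}E_k)$, where $I_j\subset\{1,\ldots,16\}$ indexes the $6$ nodes on $T_j$. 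The switch $\sigma$ restricts to a bijection between $\{E_i\}$ and $\{\tilde{T}_j\}$, so summation yields
\[
\sigma^*\!\left(\sum_{i=1}^{16}E_i\right) = \sum_{j=1}^{16}\tilde{T}_j = \tfrac{1}{2}\!\left(16H - \sum_{j=1}^{16}\sum_{k\in I_j}E_k\right) = \tfrac{1}{2}\!\left(16H - 6\sum_{k=1}^{16}E_k\right) = 8H - 3\sum_{i=1}^{16}E_i,
\]
where the coefficient $6$ comes from the dual incidence. Halving gives the second identity, and as a consistency check the two formulas together satisfy $(\sigma^*)^2=\id$ on the rank-$2$ sublattice $\langle H, \tfrac{1}{2}\sum E_i\rangle$.

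The main obstacle is the verification of the two classical geometric inputs: (i) that the partial derivatives of $F$ vanish simultaneously with multiplicity exactly one at each node and only there, and (ii) that the resolved switch swaps each exceptional divisor $E_i$ with the proper transform of some trope. Both are standard for Kummer quartic surfaces and amount to a local analysis at an ordinary double point; granting them, the rest of the argument is linear algebra in the Picard lattice of $Km(A)$.
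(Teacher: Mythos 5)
Your proposal is correct and follows essentially the same route as the paper: the second identity rests on the switch exchanging the sixteen nodes with the sixteen tropes together with the trope classes $\frac{1}{2}\bigl(H-\sum_{k\in I_j}E_k\bigr)$ and the $(16_6,16_6)$ incidence, which is exactly the content of the table and section of Keum's paper that the text cites. The only difference is that you supply the classical derivations (Gauss map for $\sigma^*H$, summation over tropes) that the paper delegates to the reference, and your lattice computations and consistency checks are all correct.
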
\noindent
To see this, refer to \cite[Table (5.2)]{keum1997automorphisms}, where the $E_i$ are written as $N_{\alpha}$ (``nodes'') for 
$\alpha\in(\Z/2\Z)^4$. In their notation, this will yield that 
\[\sigma^*\left(\frac{1}{2}\sum_{\alpha}N_{\alpha}\right) = \frac{1}{2}\sum_{\alpha}T_{\alpha}\]
where the $T_{\alpha}$ are rational curves in $Km(A)$ called the \textit{tropes}. To evaluate this sum of tropes, use section (1.8) 
of the same paper to write the tropes in terms of $H$ and the $N_{\alpha}$. 

\section{Proof of main theorem}
\subsection{Descent under the summation morphism and proof strategy}\label{descent_strategy}\hfill\\
We return now to the proof of Theorem \ref{main_theorem}. 
\begin{proposition}\label{summation_descent}
Given a morphism $g:A^{[n]}\rightarrow A^{[n]}$ where $A$ is a complex abelian surface, there exists a morphism $\bar{g}:A\rightarrow A$ 
of algebraic varieties such that the diagram 
\begin{center}
\begin{tikzcd}
{A^{[n]}} \arrow[r, "g"] \arrow[d, "\Sigma"'] & {A^{[n]}} \arrow[d, "\Sigma"] \\
A \arrow[r, "\bar{g}"]                        & A                            
\end{tikzcd}
\end{center}
commutes.
\end{proposition}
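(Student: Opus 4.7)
The plan is to realize the summation morphism as (a translate of) the Albanese map of $A^{[n]}$ and then invoke the universal property of the Albanese. In Subsection \ref{abelian_surface_prelim} it is already observed that $\Sigma^*: \Pic^0(A) \to \Pic^0(A^{[n]})$ is an isomorphism of complex abelian varieties. Dualizing this isomorphism yields an isomorphism $\Alb(A^{[n]}) \cong \Alb(A) = A$, and a standard argument shows that under this identification the Albanese map $A^{[n]} \to \Alb(A^{[n]})$ coincides (up to a translation) with $\Sigma$. So first I would spell out that $\Sigma$ is the Albanese map of $A^{[n]}$ up to a constant.

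Given $g: A^{[n]} \to A^{[n]}$, I would then consider the composition $\Sigma \circ g : A^{[n]} \to A$, which is a morphism into an abelian variety. By the universal property of the Albanese (after fixing basepoints), $\Sigma \circ g$ factors uniquely as $h \circ \Sigma + c$ where $h: A \to A$ is a homomorphism of abelian varieties and $c \in A$ a constant (coming from the basepoint discrepancy). Setting $\bar{g} := t_c \circ h$, this is a morphism of varieties $A \to A$ satisfying $\Sigma \circ g = \bar{g} \circ \Sigma$, which is what we want.

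As an alternative which avoids dualizing Picard, one could proceed fiber-wise: the fibers of $\Sigma$ are the generalized Kummer varieties $K_{n-1}(A)$, which are irreducible symplectic and in particular satisfy $h^{0,1} = 0$, so their Albanese is trivial. Hence for each fiber $F$ of $\Sigma$, the restriction $\Sigma \circ g\big|_F : F \to A$ must be constant, i.e.\ $\Sigma \circ g$ is constant on fibers of $\Sigma$. Since $\Sigma$ is proper with connected fibers and $A$ is smooth (hence normal), the Stein factorization yields $\Sigma_* \mathcal{O}_{A^{[n]}} = \mathcal{O}_A$, and then the standard rigidity principle promotes the set-theoretic factorization to an actual morphism $\bar{g}: A \to A$ with $\Sigma \circ g = \bar{g} \circ \Sigma$.

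The only delicate point is checking that the factorization is a genuine morphism and not merely a map on closed points; this is routine once one knows $\Sigma_* \mathcal{O}_{A^{[n]}} = \mathcal{O}_A$, but deserves an explicit citation. I do not expect any real obstacle: the statement holds for any morphism $g$ (not necessarily an automorphism) and does not use the Picard rank of $A$, which is also consistent with the paper's remark that this proposition alone is insufficient to force naturality.
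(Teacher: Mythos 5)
Your proposal is correct; in fact you give two valid routes, and it is worth separating them when comparing with the paper. Your ``alternative'' fiber-wise argument is the one the paper actually uses in its opening move: the generalized Kummer fibers of $\Sigma$ are simply connected, hence have trivial Albanese, so $\Sigma\circ g$ contracts each fiber to a point. Where you diverge is in how the resulting set-theoretic factorization is promoted to a morphism. The paper does this by hand: it shows $\Sigma$ is flat by miracle flatness, hence open, which gives continuity of $\bar{g}$; it then checks that $\bar{g}$ pulls back regular functions to regular functions by composing with an explicit local section of $\Sigma$ (adding a fixed length $n-1$ subscheme supported at $0$), patched with algebraic Hartogs near the point $0$. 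You instead invoke $\Sigma_*\O_{A^{[n]}}=\O_A$ (Stein factorization, since $\Sigma$ is proper with connected fibers onto the normal variety $A$) together with the standard rigidity lemma -- e.g.\ \cite[Lemma 1.15]{debarre2001higher}, which is already in the paper's bibliography, or the equivalent argument via the closed image of $(\Sigma,\Sigma\circ g)$ in $A\times A$ and Zariski's main theorem. That is a legitimate and shorter finish, at the cost of citing a black box where the paper is self-contained. Your first route, via the identification of $\Sigma$ with the Albanese map of $A^{[n]}$ up to translation (using that $\Sigma^*:\Pic^0(A)\to\Pic^0(A^{[n]})$ is an isomorphism, which the paper establishes in Subsection \ref{abelian_surface_prelim}, and dualizing), is genuinely different from the paper's proof and arguably more conceptual: the factorization of $\Sigma\circ g$ through $\Sigma$ drops out of the universal property of the Albanese with no pointwise analysis at all. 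Both of your arguments correctly use neither the automorphism hypothesis on $g$ nor the Picard rank of $A$, consistent with the statement as given.
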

\begin{proof}
Consider the composition $\Sigma\circ g:A^{[n]}\rightarrow A$. 
Generalized Kummer varieties are simply connected and hence have trivial Albanese variety (as then $H^{1,0} = 0$).
The image of any generalized Kummer fiber of $\Sigma$ under this map $\Sigma\circ g$ must be a single point, and hence 
$g$ sends fibers of $\Sigma$ to fibers. This gives a set-theoretic map $\bar{g}$ on $A$ which commutes with $g$ under $\Sigma$, which we must 
show is in fact a morphism of varieties.  This map $\bar{g}$ is well-defined 
and satisfies the preceding commutative diagram by construction. To show that this function 
is a morphism, we need to show that it is continuous with respect to the Zariski topology on $A$ and that it pulls back regular 
functions on open subsets of $A$ to regular functions. 

We first observe that $\Sigma:A^{[n]}\rightarrow A$ is flat by miracle flatness \cite[Exercise III.10.9]{hartshorne1977algebraic}, 
since $A^{[n]}$ and $A$ are smooth and $\Sigma$ has equidimensional fibers. 

By \cite[\href{https://stacks.math.columbia.edu/tag/01UA}{Lemma 01UA}]{stacks-project} $\Sigma$ is thus an open map. Since 
$\Sigma$ is surjective we have that $\Sigma(\Sigma^{-1}(X)) = X$ for any set $X\subset A$, so that for any open set $U\subset A$ we have that 
\begin{align*}
    \bar{g}^{-1}(U) &= \Sigma(\Sigma^{-1}(\bar{g}^{-1}(U))) \\
                    &= \Sigma(g^{-1}(\Sigma^{-1}(U))) 
\end{align*}
which is open since $\Sigma$ is an open mapping, and $g, \Sigma$ are continuous, so that $\bar{g}$ is continuous. 

Let us now prove the second condition that $\bar{g}$ pulls back local regular functions to local regular functions. Let 
$r\in \O_A(V)$ be a local regular function for some open set $V$ in $A$, and write $U = \bar{g}^{-1}(V)$ for convenience. We wish to show that the composition $(r\circ \bar{g})|_U:U\rightarrow\C$ lies in 
$\O_A(U)$. If $U$ does not contain the point 0, fix a length $n-1$ subscheme $\mathcal{Z}_0$ of $A$ supported entirely at 0.
There then exists a well-defined local section 
$s|_U : U\rightarrow \Sigma^{-1}(U)$ by simply defining 
\[
s|_U(p) = \mathcal{Z}_0 + (p) \in A^{[n]},
\]
so that $\Sigma\circ s|_U$ is the identity. We thus find that 
\begin{align*}
    (r\circ\bar{g})|_U &= r\circ\bar{g}\circ\Sigma\circ s|_U \\
                &= r\circ\Sigma\circ g\circ s|_U
\end{align*}
which is regular since $\Sigma, g, s|_U$ are all morphisms. If $U$ does contain the point 0, then the same argument gives 
that the pullback of $r$ to $U - \{0\}$ is regular, which extends uniquely to a regular function on $U$ by algebraic Hartogs since 
$A$ is two-dimensional. This unique extension to a regular function on $U$ is just $r\circ \bar{g}$, so that in either case 
$r$ pulls back to a regular function as desired, proving that $\bar{g}$ is a morphism. 
\end{proof}
The assignment $g\mapsto \bar{g}$ is clearly functorial by considering the following commutative diagram: 
\begin{center}
\begin{tikzcd}
{A^{[n]}} \arrow[r, "g"] \arrow[d, "\Sigma"'] & {A^{[n]}} \arrow[r, "h"] \arrow[d, "\Sigma"'] & {A^{[n]}} \arrow[d, "\Sigma"] \\
A \arrow[r, "\bar{g}"]                        & A \arrow[r, "\bar{h}"]                        & A                          
\end{tikzcd}
\end{center}
where $g,h:A^{[n]}\rightarrow A^{[n]}$ are arbitrary morphisms. It then follows that if $g:A^{[n]}\rightarrow A^{[n]}$ 
is an automorphism then so is $\bar{g}$, giving a group homomorphism which we denote by 
\begin{align*}
    &\ind_A^{\Sigma} : \Aut(A^{[n]}) \rightarrow \Aut(A) \\ 
    &\ind_A^{\Sigma}(g) = \bar{g}
\end{align*}
(to be read as ``the automorphism on $A$ induced by/under $\Sigma$''). 
Considering the natural automorphism map $(-)^{[n]}:\Aut(A)\rightarrow \Aut(A^{[n]})$, it is not true that 
$\ind_A^{\Sigma}\circ (-)^{[n]}$ is the identity, since an automorphism of $A$ as a variety may include a nontrivial translation term. 
A simple calculation shows that this composition is the identity when restricted to group automorphisms of $A$, however. Moreover, this gives a surjective map from $\Aut(A^{[n]})$ to $\Aut(A)$. 
\begin{proposition}
The map \[\ind_A^{\Sigma}:\Aut(A^{[n]})\rightarrow \Aut(A)\] is surjective. 
\end{proposition}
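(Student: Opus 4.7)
The plan is to show that every automorphism of $A$ is of the form $\bar{g}$ for some natural automorphism $g = \varphi^{[n]}$, after suitably adjusting the translation part using the fact that $A$ is divisible.

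First, I would recall the structure of $\Aut(A)$: every automorphism $\varphi$ of $A$ as a variety decomposes uniquely as $\varphi = t_b \circ h$ where $h$ is a group automorphism of $A$ and $t_b$ is translation by some point $b \in A$ (see \cite[Proposition 1.2.1]{BL_complex_abvar}). So it suffices to realize any such $\varphi$ as $\bar{g}$ for some $g\in\Aut(A^{[n]})$.

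Next, I would record the compatibilities between $\Sigma$ and the two basic types of natural automorphisms. For a group automorphism $h:A\to A$ the map $h$ commutes with addition, so a direct check on reduced subschemes (which form a dense open locus in $A^{[n]}$) gives
\[
\Sigma \circ h^{[n]} = h \circ \Sigma.
\]
For a translation $t_a:A\to A$ and a reduced subscheme $\{x_1,\dots,x_n\}$, the sum of the translated points is $\sum_i(x_i+a) = \sum_i x_i + na$, yielding
\[
\Sigma \circ t_a^{[n]} = t_{na} \circ \Sigma.
\]
Both identities then extend to all of $A^{[n]}$ by density and continuity.

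Now given $\varphi = t_b\circ h \in \Aut(A)$, I would use the divisibility of the abelian variety $A$ to choose $a\in A$ with $na = b$, and define
\[
g \ := \ (t_a\circ h)^{[n]} \ = \ t_a^{[n]}\circ h^{[n]} \ \in \ \Aut(A^{[n]}).
\]
Combining the two compatibilities gives
\[
\Sigma \circ g \ = \ \Sigma \circ t_a^{[n]}\circ h^{[n]} \ = \ t_{na}\circ \Sigma \circ h^{[n]} \ = \ t_b\circ h \circ \Sigma \ = \ \varphi \circ \Sigma,
\]
so $\bar g = \varphi$ by the uniqueness of the descended map in Proposition \ref{summation_descent}. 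This proves surjectivity. There is no real obstacle here; the only subtlety is that $\varphi^{[n]}$ itself generally does \emph{not} descend to $\varphi$ (it descends to $t_{nb}\circ h$), which is why the divisibility of $A$ must be invoked to replace $b$ with an $n$-th root $a$.
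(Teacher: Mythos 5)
Your proposal is correct and follows essentially the same route as the paper: decompose $\varphi = t_b\circ h$, use divisibility of $A$ to pick an $n$-th root of the translation point, and check that the natural automorphism $(t_a\circ h)^{[n]}$ descends to $\varphi$ under $\Sigma$. The only cosmetic difference is that you split the verification into two separate compatibility identities checked on the reduced locus, whereas the paper computes directly on formal sums of points with multiplicity; both amount to the same calculation.
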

\begin{proof}
Let $g\in \Aut(A)$ be given, and decompose it as $g = t_a \circ h$, where $h$ is a group automorphism and $t_a$ is the translation 
by $a\in A$. Let $b\in A$ be a point satisfying $nb = a$ - there are $n^4$ possible choices for $b$. Fix a point $p\in A$ and let 
$\mathcal{Z}\in A^{[n]}$ be an arbitrary point in the preimage $\Sigma^{-1}(p)$, and write $\mathcal{Z} = \sum_{i=1}^k a_i(x_i)$ for 
the support of $\mathcal{Z}$ with multiplicity. Our argument only cares about multiplicities rather than finer scheme structure, 
so we will abuse notation and identify $\mathcal{Z}$ with this formal sum.
We wish to show that $\ind_A^{\Sigma}((t_b\circ h)^{[n]}) = g$. We calculate 
\begin{align*}
    \Sigma((t_b\circ h)^{[n]}\left(\sum_{i=1}^k a_i(x_i)\right)) &= \Sigma\left(\sum_{i=1}^k a_i(t_b\circ h)(x_i)\right) \\
                                        &= \Sigma\left(\sum_{i=1}^k a_i(h(x_i) + b)\right) \\
                                        &= \sum_{i=1}^k a_ih(x_i) + nb \\
                                        &= h\left(\sum_{i=1}^k a_ix_i\right) + a \\
                                        &= h(p) + a \\
                                        &= (t_a \circ h)(p) \\
                                        &= g(p)
\end{align*}
where we hope it is clear where $\Sigma$ refers to a formal sum of points versus addition on $A$,
so that $\ind_A^{\Sigma}((t_b \circ h)^{[n]}) = g$ and hence $\ind_A^{\Sigma}$ is surjective, as desired. 
\end{proof}
The calculation in the preceding proposition shows that $\ind_A^{\Sigma}$ is not injective however, since any of the $n^4$ possible 
points $b\in A$ satisfying $nb = a$ would yield the same $g = \ind_A^{\Sigma}((t_b \circ h)^{[n]})$ but different initial 
automorphisms $(t_b \circ h)^{[n]} \in \Aut(A^{[n]})$. For the case $n=2$ we will show that this fully describes the fibers of 
$\ind_A^{\Sigma}: \Aut(A^{[2]}) \rightarrow \Aut(A)$ for certain polarization types by proving that all automorphisms of $A^{[2]}$ are natural in those cases. 
\newline\newline
For the rest of the paper, let $A$ denote a complex abelian surface of Picard rank 1, so that $\End(A) = \Z$ with group automorphisms 
$\Z/2\Z$, and hence its full automorphism group $\Aut(A)$ is a disjoint union of two complex tori of dimension 2. We fix a symmetric 
polarization $\Theta$ satisfying $(-1)^*\Theta = \Theta$ which generates the Neron-Severi group $\NS(A) 
\cong \Z$, such that either $\Theta^2 = 2$ (the principal polarization case) or $\Theta^2 = (2\ell)^2$ for $\ell$ a positive integer. 
Note that $\Theta^2$ is necessarily even, so that this covers all polarizations with self-intersection a perfect square. 

Boissière proved that the automorphism groups of $X$ and $X^{[n]}$ have the same dimension \cite[Corollaire 1]{boissière_2012} 
and hence the same identity component $\Aut^0$ by considering the connected zero-dimensional quotient of $\Aut^0(X^{[n]})$ by the 
inclusion of $\Aut^0(X)$ under the map $(-)^{[n]}$. 

Let us fix an arbitrary automorphism $g:A^{[2]}\rightarrow A^{[2]}$ and decompose the induced automorphism 
on $A$ as $\ind_A^{\Sigma}(g) = t_a\circ h$ where $h(0) = 0.$ We will first calculate various intersection 
numbers on $A^{[2]}$ to show that $g$ necessarily fixes the class of the exceptional divisor $E$ in the Neron-Severi group of 
$A^{[2]}$. We will then lift this to show that $g$ fixes the class of $E$ in the Picard group and then as an actual subvariety. This 
will allow us to descend $g$ to an automorphism of $A^{(2)}$, which we can then lift to an automorphism of $A^2$ by results of 
Belmans, Oberdieck, and Rennemo. Calculations on $A^2$ will show that $g$ must be natural. 

\subsection{Fixing the exceptional divisor in Neron-Severi}\label{fix_E_NS}\hfill\\
This is the most involved portion of the proof. 

Suppose that $g^*B = ax + by + cB$ and that $g^*x = dx + ey + fB$ in the Neron-Severi group of $A^{[2]}$, where $a,b,c,d,e,f$ are 
integers. We wish to show that $g^*B = B$ in Neron-Severi, i.e. that $a = b = 0, c = 1$.

We have the diagram 
\begin{center}
\begin{tikzcd}
{A^{[2]}} \arrow[d, "\Sigma"'] \arrow[r, "g"] & {A^{[2]}} \arrow[d, "\Sigma"] \\
A \arrow[r, "t_a\circ h"]                      & A                             
\end{tikzcd}.
\end{center}
Since $A$ has Picard rank 1, $h$ is multiplication by $\pm 1$, so that $h^*\Theta = \Theta$ regardless and hence $(t_a \circ h)^*\Theta = \Theta$ 
in Neron-Severi. We conclude upon taking pullbacks that $g^*\Sigma^*\Theta = \Sigma^*\Theta$ 
and hence $g^*y = y$. 
Since $g^*$ acts as an invertible linear transformation on the lattice $\NS(A^{[2]}) \cong \Z^3$, we may write it as an 
invertible integer matrix with respect to the ordered basis $x,y,B$ as 
\[
g^* = 
\begin{bmatrix} 
    d & 0 & a \\
    e & 1 & b \\
    f & 0 & c
\end{bmatrix},
\]
which must necessarily have determinant $\pm 1$. Recall our calculated intersection numbers at the end of Subsection \ref{intersection_number_subsection}. 
Since $g^*$ must preserve intersection numbers as it is an automorphism, using our calculations in the previous section and our 
expressions for $g^*B$, we have the equality
\begin{align*}
    -16k &= \int_{A^{[2]}}y^2B^2 \\
        &= \int_{A^{[2]}}g^*(y^2B^2) \\
        &= \int_{A^{[2]}}y^2 g^*(B)^2 \\
        &= \int_{A^{[2]}} y^2(a^2x^2+b^2y^2 + c^2B^2 +2(abxy + acxB + bcyB)) \\
        &= 8k^2a^2 - 16kc^2
\end{align*}
so that $ka^2 - 2c^2 = -2$, which implies that $c^2\neq 0$.
We note next that since $B^3$ is numerically trivial, so is $g^*(B^3)$, so that 
\begin{align*}
    0 &= \int_{A^{[2]}}g^*(B)^3 B \\
        &= \int_{A^{[2]}} (a^3x^3 + b^3y^3 + c^3B^3 + 3(a^2bx^2y + a^2cx^2B + ab^2xy^2 + ac^2xB^2 + b^2cy^2B) + 6abcxyB) B \\
        &= -12ck(a + 2b)^2
\end{align*}
where we simplify the expression by recalling that a monomial $x^ay^bB^c$ can only be nonzero if $c = 0, 2$. 
Since $c, k\neq 0$, we conclude that $a+2b = 0$. We also have that
\begin{align*}
    8k^2 &= \int_{A^{[2]}} x^2y^2 \\
    &= \int_{A^{[2]}} g^*(x)^2y^2 \\
    &= \int_{A^{[2]}} (d^2x^2 + e^2y^2 + f^2B^2 + 2(de xy + df xB + ef yB))y^2 \\
    &= 8k^2d^2 - 16kf^2,
\end{align*}
so that $kd^2 - 2f^2 = k$. We finally have the equality 
\begin{align*}
    12k^2 &= \int_{A^{[2]}} x^4 \\
    &= \int_{A^{[2]}} (dx + ey + fB)^4 \\
    &= 12k^2d^4 + 48k^2 d^3e - 24k d^2f^2 + 48k^2 d^2e^2 - 96k def^2 - 96k e^2f^2
\end{align*}
where we preemptively cancelled numerically trivial terms in expanding the multinomial, so that 
\[
kd^4 + 4kd^3e - 2d^2f^2 + 4kd^2e^2 - 8def^2 - 8e^2f^2 = k. 
\]
Substituting in the $2f^2 = kd^2 - k = k(d^2 - 1)$, we find that 
\[
k = k(d+2e)^2
\]
so that $d+2e = \pm 1$, or $d = \pm 1 - 2e$.  
\newline\newline
Substituting in the relations $a = -2b$ and $d = \pm 1 - 2e$ to our matrix for the action of $g^*$ on $\NS(A^{[2]})$, we obtain
\[
g^* = 
\begin{bmatrix}
    \pm 1 - 2e & 0 & -2b \\
    e & 1 & b \\
    f & 0 & c
\end{bmatrix}\in \GL(\NS(A^{[2]})).
\]
\subsubsection{Perfect square polarization case}\hfill\\
It is now almost immediate to conclude this section in the case that $\Theta^2$ is an even perfect 
square. Suppose that $k = 2\ell^2$, then 
\begin{align*}
-2 &= ka^2 - 2c^2 \\
    &= 2\ell^2a^2 - 2c^2 \\
    &= 2(\ell a-c)(\ell a+c)
\end{align*}
so that $(c-a\ell)(c+a\ell) = 1$. Since $c-a\ell, c+a\ell$ are integers, it must be the case that they are 
either both $1$ or both $-1$. In the first case, by adding together $c+a\ell=1$ and $c-a\ell=1$ we 
find that $c=1$, whence $al=0$ so that $a=0$ and thus $b=0$ since $a=-2b$, giving 
$(a,b,c)=(0,0,1)$ as desired. In the second case where $c+a\ell = -1, c-a\ell = -1$ then by adding 
together the equations we find that $c=-1$, and as before that $a=b=0$ so that $(a,b,c)=(0,0,-1)$, 
which can't happen as $g^*B$ must be effective. 

\subsubsection{Principal polarization case}\hfill\\
We assume for the rest of this section that $\Theta^2 = 2$, and that the symmetric principal polarization $\Theta$ has the 
Riemann theta function as a section as described in Subsection \ref{global_sections_hilb2}.
By formula \ref{global_sections_formula}, we must have that $d$ and $a$ are nonnegative, as $h^0(A^{[2]}, \Theta_{[2)]}) = h^0(A^{[2]}, \O_{A^{[2]}}(B)) = 1$, but if $d < 0$ then $h^0(A^{[2]}, L) = 0$ for all line bundles $L$ in the 
numerical equivalence class $g^*x = dx + ey + fB$ and similarly for $g^*B$. Moreover, we must have $k+2\ell \ge 0$ in order for a line bundle in the numerical 
equivalence class $kx + \ell y + mB$ to have global sections (regardless of $m$, as if $m\ge 0$ then we can pushforward to $A^{(2)}$ 
to remove the $\O_{A^{[2]}}(mB)$ factor and if $m < 0$ then we only decrease global sections). We have shown that $d = \pm 1 - 2e$ via intersection numbers, so that by plugging in $k = d$ and $\ell = e$ we deduce that $k + 2\ell = (\pm 1 - 2e) + 2e = \pm 1$, so that necessarily $d = 1-2e$.

Our matrix is thus of the form  
\[
g^* = 
\begin{bmatrix}
1 - 2e & 0 & -2b \\
e & 1 & b \\
f & 0 & c
\end{bmatrix}
\]
where $d = 1-2e$ and $a = -2b$ satisfy the relations $d^2 - 2f^2 = 1$ and $a^2 - 2c^2 = -2$. We will now describe the set of 
possible $a,c,d,f$ such that the matrix 
\[
\begin{bmatrix}
d & a \\
f & c
\end{bmatrix}
\]
has determinant $\pm 1$ (the determinant of this $2\times 2$ matrix is the determinant of the matrix for $g^*$). 

Write $\mathcal{P}_k$ for the set 
\[
\mathcal{P}_k := \{(x,y)\in \Z^2 \,:\, x^2 - 2y^2 = k\}
\]
of solutions to the generalized Pell's equation $x^2-2y^2 = k$. Thus, for us $(d,f) \in \mathcal{P}_1$ and 
$(a,c)\in \mathcal{P}_{-2}$. It is easy to check that the function $(x,y) \mapsto (2y, x)$ is a map from $\mathcal{P}_1$ to 
$\mathcal{P}_{-2}$. Conversely, given integers satisfying $x^2-2y^2 = -2$, by reducing modulo 2 we see that $x$ must be even 
and hence the inverse function $(x,y) \mapsto (y, \frac{x}{2})$ is a well-defined map from $\mathcal{P}_{-2}$ to $\mathcal{P}_1$, 
so that these two sets are in bijection. 
\begin{proposition}\label{pell_sol_matrix}
If integers $d,f$ are fixed such that $d^2 - 2f^2 = 1$, then the only possible $a,c$ such that the matrix 
\[
\begin{bmatrix}
d & a \\
f & c
\end{bmatrix}
\]
has determinant 1 is the pair $a = 2f, c = d$ obtained from our map $\mathcal{P}_1 \rightarrow \mathcal{P}_{-2}$, and the only 
option for $a, c$ to obtain determinant $-1$ is the same with the right column flipped in sign.
\end{proposition}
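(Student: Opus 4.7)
The plan is to reinterpret the two Pell-type constraints as norm-one conditions in the ring $\Z[\sqrt{2}]$ and then use multiplicativity of the norm together with the prescribed trace to pin down the solutions.

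First I would handle the parity of $a$: reducing $a^2 - 2c^2 = -2$ modulo $2$ forces $a$ to be even, so I may write $a = 2a'$ with $a' \in \Z$. Substituting back gives $c^2 - 2a'^2 = 1$, so $(c, a') \in \mathcal{P}_1$ as well. Thus both pairs $(d,f)$ and $(c,a')$ live in $\mathcal{P}_1$, which I identify with the norm-one elements of $\Z[\sqrt{2}]$ via $(x,y) \leftrightarrow x + y\sqrt{2}$. Set $\alpha = d + f\sqrt{2}$ and $\gamma = c + a'\sqrt{2}$, so $N(\alpha) = N(\gamma) = 1$.

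Next I would compute
\[
\alpha\,\bar{\gamma} = (d+f\sqrt{2})(c-a'\sqrt{2}) = (dc - 2fa') + (fc - da')\sqrt{2}.
\]
The real part equals $dc - 2fa' = dc - af$, which is precisely the determinant of the matrix in the statement. So under the hypothesis $\det = \pm 1$, the element $\alpha\bar{\gamma} = u + v\sqrt{2}$ has $u = \pm 1$, while $N(\alpha\bar{\gamma}) = N(\alpha)N(\gamma) = 1$ forces $u^2 - 2v^2 = 1$. Combined with $u = \pm 1$ this yields $v = 0$, so $\alpha\bar{\gamma} = \pm 1$.

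Finally, since $\alpha\bar{\alpha} = 1$ we have $\bar{\alpha} = \alpha^{-1}$; conjugating $\alpha\bar{\gamma} = \pm 1$ and rearranging gives $\gamma = \pm \alpha$. In the $+$ case this reads $c = d$ and $a' = f$, i.e.\ $(a,c) = (2f, d)$, giving determinant $+1$; in the $-$ case it reads $c = -d$ and $a' = -f$, i.e.\ $(a,c) = (-2f, -d)$, which is $(2f, d)$ with the right column flipped in sign, giving determinant $-1$. These are the two possibilities claimed.

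I do not foresee a real obstacle: the argument is essentially the uniqueness, up to sign, of the quotient of two norm-one elements of $\Z[\sqrt{2}]$ when the trace of the quotient is prescribed. The only point requiring any care is the parity step that lets one replace the $\mathcal{P}_{-2}$ condition on $(a,c)$ with a $\mathcal{P}_1$ condition on $(c, a/2)$, putting both pairs on the same footing inside $\Z[\sqrt{2}]$.
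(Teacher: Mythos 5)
Your proof is correct, and it takes a genuinely different route from the paper's. The paper fixes $d,f$, solves the determinant condition for $a=(dc-1)/f$, substitutes into $a^2-2c^2=-2$, and extracts $c=d$ from the resulting quadratic via the quadratic formula and the relation $d^2-2f^2=1$; this forces a separate (slightly awkward) treatment of the case $f=0$, and the determinant $-1$ case is only asserted to "follow." Your argument instead transports everything into $\Z[\sqrt{2}]$: the parity reduction turns the $\mathcal{P}_{-2}$ condition on $(a,c)$ into a norm-one condition on $\gamma=c+\tfrac{a}{2}\sqrt{2}$, the determinant appears as the rational part of $\alpha\bar{\gamma}$, and multiplicativity of the norm plus $u=\pm1$ forces $\alpha\bar{\gamma}=\pm1$, hence $\gamma=\pm\alpha$. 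This handles both determinant signs and the $f=0$ case uniformly, avoids any division, and makes the underlying reason for uniqueness transparent (two norm-one elements whose product-with-conjugate has rational part $\pm1$ must agree up to sign). The paper's computation is more elementary but more ad hoc; yours is cleaner and would generalize to other real quadratic orders. All steps check out: $a$ even follows from $a^2-2c^2=-2 \pmod 2$, the rational part of $(d+f\sqrt{2})(c-\tfrac{a}{2}\sqrt{2})$ is indeed $dc-af$, and $u^2-2v^2=1$ with $u=\pm1$ gives $v=0$.
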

\begin{proof}
Suppose that $d, f$ are fixed 
and that $dc - af = 1$. We assume that $f\neq 0$, as if $f = 0$ then necessarily $d=c=1$ to have determinant 1 
and thus $a = 0$ since $a^2 - 2c^2 = -2$, whence $e = b = 0$ so that we have the identity matrix as desired. 
We thus find that $a = \dfrac{dc-1}{f}$, so that from the relation $a^2 - 2c^2 = -2$ we find that 
\begin{align*}
    -2 &= a^2 - 2c^2 \\
        &= \left(\frac{dc-1}{f}\right)^2 - 2c^2 \\
\end{align*}
so 
\[
    -2f^2 = (d^2c^2 - 2dc + 1) - 2c^2f^2 
\]
or 
\[
(d^2 - 2f^2)c^2 - 2dc + (1 + 2f^2) = 0,
\]
i.e.
\[
c^2 - 2dc + (1 + 2f^2) = 0.
\]
By the quadratic formula, we find that 
\begin{align*}
    c &= \frac{2d \pm \sqrt{4d^2 - 4(1 + 2f^2)}}{2} \\
        &= d \pm \sqrt{d^2 - 2f^2 - 1} \\
        &= d.
\end{align*}
From the resulting matrix 
\[
\begin{bmatrix}
    d & a \\
    f & d
\end{bmatrix}
\]
we see that $d^2 - af = 1$ or $a = \dfrac{d^2 - 1}{f} = \dfrac{2f^2}{f} = 2f$, so that our matrix is of the form 
\[
\begin{bmatrix}
    d & 2f \\
    f & d
\end{bmatrix}
\]
in order to have determinant 1. The statement for determinant $-1$ and fixed $d,f$ follows. 
\end{proof}
We now can split up the argument according to whether the determinant is $1$ or $-1$. 
\newline
\noindent{\bf Case (I): }{\it Determinant $dc-af=1$}\hfill\\
Per proposition \ref{pell_sol_matrix}, we have that 
\[
    \begin{bmatrix}
    d & a \\
    f & c 
    \end{bmatrix} = 
    \begin{bmatrix}
    d & 2f \\
    f & d
    \end{bmatrix}
\]
where $d = 1-2e$ and $a = 2f = -2b$. Since $g^*x,\, g^*B$ are effective, we must have that 
$d \ge 0$ and $a = 2f \ge 0$ by formula \ref{global_sections_formula}. Since $d = 1-2e \ge 0$, 
we have that $e \le 0$. Suppose $e < 0$, 
so that we may apply the $k > 0, k+2\ell > 0$ case of formula \ref{global_sections_formula} to 
calculate $\dim H^0(A^{[2]}, g^*x) = \dim H^0(A^{[2]}, dx + ey + fB) = \dim H^0(A^{(2)}, dx + ey)$ 
since $f \ge 0$ and $d + 2e = (1-2e) + 2e = 1.$ We find that 
\begin{align*}
    \dim H^0(A^{[2]}, g^*\Theta_{[2]}) &= \frac{((1-2e)^2 + 1)((1-2e) + 2e)^2}{2} \\
        &= \frac{2 - 4e + 4e^2}{2} \\
        &= (1 - e)^2 + e^2
\end{align*}
since our formula applies to any line bundle in the same component $dx + ey + fB$ of the 
Neron-Severi group. If $e < 0$ then this last expression is at least 5, contradicting 
that $H^0(A^{[2]}, \Theta_{[2]})$ is 1-dimensional. Thus, $e = 0$, so that $d = 1$ and thus 
$f = 0$ since $d^2 - 2f^2 = 1$, and thus $a=b=0$ and $c = d = 1$, giving that $g^*B = B$ in 
this case, as desired. 
\newline

\noindent{\bf Case (II): }{\it Determinant $dc-af=-1$}\hfill\\
This case is more involved, and requires all of the various numerical invariants we calculated. 
Per proposition \ref{pell_sol_matrix}, we have that 
\[
    \begin{bmatrix}
    d & a \\
    f & c 
    \end{bmatrix} = 
    \begin{bmatrix}
    d & -2f \\
    f & -d
    \end{bmatrix}
\]
with $d = 1-2e \ge 0$ and $a = -2b \ge 0$ as before, so that $b = f \le 0$ and $e \le 0$. If $e = 0$ 
then from tracing out our equalities we see that $g^*B = ax + by + cB = -B$, a contradiction since 
$g^*B$ must be effective. Suppose now that $e < 0$, so that $d\ge 3$. For ease of reading we will flip the sign on $f$, so that $f$ is a positive integer and the solutions to our Pell's equation with the correct signs are $(d,-f)$. The first few solutions of the 
Pell's equation $d^2 - 2f^2 = 1$ with correct signs are 
\[
    (d,-f) = (3,-2), (17, -12), (99,-70), (577, -408), \ldots
\]
We can partition these solutions into two sets $\{(3,-2)\},$ and $\{(17,-12), (99,-70), (577,-408),\ldots\}$. 
Each set will require a different argument to rule out.
\newline\newline
\textit{Subcase (i)}: $(d,-f) = (3,-2)$\hfill\\
This case will correspond to $(a,b,c) = (4, -2, -3)$. We wish to show that no line 
bundle in the numerical equivalence class $4x - 2y - 3B$ has a section, contradicting that $g^*B$ must be effective. 
Any line bundle in this equivalence class can be written $\Theta_{[2]}^{\otimes 4}\otimes\Sigma^*\Theta^{\otimes -2}\otimes\O_{A^{[2]}}(-3B)\otimes L_0$ for some $L_0\in\Pic^0(A^{[2]})$. 
By formula \ref{global_sections_formula} case 2 we see that we must have $L_0$ a 2-torsion line bundle, else this line bundle automatically has 
no sections. If $L_0$ is 2-torsion, then by our remark in Subsection \ref{global_sections_hilb2} we may 
identify 
\[
    H^0(A^{[2]}, \Theta_{[2]}^{\otimes 4}\otimes\Sigma^*\Theta^{\otimes -2}\otimes\O_{A^{[2]}}(-3B)\otimes L_0) 
        \cong H^0(A, (\Theta^{\otimes 2}\otimes L_0)\otimes \mathcal{I}_0^{\otimes 3})^+
\]
via the difference map, where $+$ denotes invariance under $(-1)^*$, and $\mathcal{I}_0^{\otimes 3}$ denotes sections vanishing 
to order at least 3 at $0\in A$. As in Subsection \ref{global_sections_hilb2}, if we take an even section of the line bundle 
$\Theta^{\otimes 2}\otimes L_0$ the odd degree terms of its Taylor expansion about the point $0\in A$ will automatically vanish, 
so that if it vanishes to order 3 at 0 it automatically vanishes to order 4. 

The Seshadri constant of $\Theta$ on $A$ is $\frac{4}{3}$ - the 
upper bound follows by \cite[Theorem A.1(a)]{bauer1998seshadri}, and the lower bound follows by \cite{nakamaye1996seshadri} 
(note that since $A$ has Picard rank 1, it cannot be a product of two elliptic curves else we would have two independent classes 
in $\NS(A)$). It follows by definition \cite[Chapter 15.3]{BL_complex_abvar} that 
\[
    \frac{\Theta\cdot C}{\mult_0(C)} \ge \frac{4}{3}
\]
for any curve $C$ on $A$ numerically equivalent to $k\Theta$, 
where $\mult_0(C)$ is the multiplicity of $C$ at the point $0\in A$. Rearranging this and using that $\Theta\cdot (k\Theta) = 2k$, we find that 
\[
    \mult_0(C) \le \frac{3k}{2}. 
\]
In particular, for $k=2$ we see that any section of $\Theta^{\otimes 2}\otimes L_0$ vanishes to order at most 3 at 0 since 
twisting by $L_0$ does not change numerical equivalence. We needed a section vanishing to order 4, so that no line bundle in the 
numerical equivalence class $4x - 2y - 3B$ on $A^{[2]}$ has sections, ruling out this case as desired. 
\newline\newline
\textit{Subcase (ii)}: $(d,-f) = (17,-12), (99,-70), \ldots$\hfill\\
The trick in this case is to pull back to $A\times Km(A)$ under $\tilde{\mu}^*$, and then apply the switch involution $\sigma^*$ to rewrite the coefficients of the pulled-back line bundle. This will allow us to calculate that the pullback has too many global sections.

Recall that $d = 1-2e$. 
By Proposition \ref{kummer_k3_pullback}, if 
$\tilde{\mu}:A\times Km(A)\rightarrow A^{[2]}$ is the morphism defined in Subsection \ref{kummer_k3} 
then 
\[
\tilde{\mu}^*(\Theta_{[2]}^{\otimes d}\otimes\Sigma^*\Theta^{\otimes e}\otimes\O_{A^{[2]}}(-fB)) = \Theta^{\otimes 2}\boxtimes\O_{Km(A)}\left(dH - \frac{f}{2}\sum_{i=1}^{16}E_i\right)
\]
on $A\times Km(A)$, where $H$ is the ample generator on $Km(A)$ satisfying 
$H^2 = 2\Theta^2 = 4$ and the $E_i$ are the 16 nodes obtained by blowing up the singular points of $A/\pm 1$. 
Suppose $(d_0,f_0)$ is a solution to the Pell's equation $d^2-2f^2 = 1$ with $d_0, f_0 > 0$, and let $\sigma:Km(A)\rightarrow Km(A)$ be the switch involution. 
By Lemma \ref{switch_action_lemma}, 
\begin{align*}
\sigma^*\left(d_0H + \frac{f_0}{2}\sum_{i=1}^{16}E_i\right) &= d_0(3H-\sum_{i=1}^{16}E_i) + f_0(4H - \frac{3}{2}\sum_{i=1}^{16}E_i) \\
    &= (3d_0+4f_0)H - (\frac{3}{2}f_0 + d_0)\sum_{i=1}^{16}E_i.
\end{align*}
Note that if we double the coefficient of $\sum_{i=1}^{16}E_i$ and plug these coefficients into the Pell's equation $d^2-2f^2 = 1$ we find that 
\[
    (3d_0+4f_0)^2 - 2(3f_0+2d_0)^2 = d_0^2 - 2f_0^2 = 1,
\]
so that these coefficients yield a solution to the same Pell's equation with the sign flipped on $f$. 

For instance,
\begin{align*}
\sigma^*\left(3H + \sum_{i=1}^{16}E_i\right) &= 17H - 6\sum_{i=1}^{16}E_i \\
\sigma^*\left(17H + 6\sum_{i=1}^{16}E_i\right) &= 99H - 35\sum_{i=1}^{16}E_i \\
\sigma^*\left(99H + 35\sum_{i=1}^{16}E_i\right) &= 577H - 204\sum_{i=1}^{16}E_i,
\end{align*}
and in general we can rewrite the pullback to $Km(A)$ under $\tilde{\mu}^*$ of a solution $(d,-f)$ to the Pell's equation $d^2-2f^2 = 1$ with $d, f > 0$ as the pullback under $\sigma^*$ of the previous solution with the $f$ coefficient made positive. 

Fix $(d_1,-f_1)$ such that $d_1^2 - 2f_1^2 = 1$ with $d_1, f_0 > 0$ with $d_1 \ge 17$, and let $(d_0, f_0)$ be the previous solution of the Pell's equation with $d_0, f_0>0$, so that $3d_0 + 4f_0 = d_1, 3f_0 + 2d_0 = f_1$. Since $d_1 \ge 17$, we have $d_0 \ge 3, f_0 \ge 2$. 

Since $\frac{f_0}{2}\sum_{i=1}^{16}E_i$ is effective, we have the 
ideal sequence 
\[
0\rightarrow \O_{Km(A)}(d_0H)\rightarrow\O_{Km(A)}\left(d_0H+\frac{f_0}{2}\sum_{i=1}^{16}E_i\right)\rightarrow \O_{Km(A)}\left(d_0H+\frac{f_0}{2}\sum_{i=1}^{16}E_i\middle)\right|_{\frac{f_0}{2}\bigcup_{i=1}^{16}E_i}\rightarrow 0.
\]
Since $\left(d_0H+\frac{f_0}{2}\sum_{i=1}^{16}E_i\right)\cdot E_i = \frac{f_0}{2}(E_i\cdot E_i) = -f_0$, we see that the sheaf $\O_{Km(A)}\left(d_0H+\frac{f_0}{2}\sum_{i=1}^{16}E_i\right)$ 
restricts to the line bundle $\O_{\P^1}(-f_0)$ on each of the 16 disjoint rational curves $E_i$. Thus, this last sheaf in our exact sequence has no global sections, so 
\[
H^0(Km(A), \O_{Km(A)}\left(d_0H+\frac{f_0}{2}\sum_{i=1}^{16}E_i\right))\cong H^0(Km(A), \O_{Km(A)}(d_0H)) 
\]
by taking the exact sequence in cohomology. 

By Riemann-Roch for K3 surfaces, we find that 
\[
    \chi(Km(A), d_0H) = \frac{1}{2}(d_0H)^2 + 2 = \frac{1}{2}(d_0^2\cdot 4) + 2 = 2(d_0^2 + 1).
\]
Since the divisor $d_0H$ has positive self intersection and zero intersection with all 
the $E_i$, it is big and nef and hence the higher cohomology of $\O_{Km(A)}(d_0H)$ vanishes by 
Kawamata-Viehweg. We conclude that $\dim H^0(Km(A), \O_{Km(A)}(d_0H+\frac{f_0}{2}\sum_{i=1}^{16}E_i)) = \dim H^0(Km(A), \O_{Km(A)}(d_0H)) = 2(d_0^2 + 1)$, and hence (letting $d_1 = 1-2e_1$).
\begin{align*}
\dim H^0(A\times Km(A)&, \tilde{\mu}^*(\Theta_{[2]}^{\otimes d_1}\otimes\Sigma^*\Theta^{\otimes e_1}\otimes\O_{A^{[2]}}(-f_1B))) \\
    &= \dim H^0(A\times Km(A), \Theta^{\otimes 2}\boxtimes\O_{Km(A)}\left(d_1H - \frac{f_1}{2}\sum_{i=1}^{16}E_i\right)) \\
    &= \dim H^0(A, \Theta^{\otimes 2})\otimes H^0(Km(A), \sigma^*\O_{Km(A)}\left(d_0H + \frac{f_0}{2}\sum_{i=1}^{16}E_i\right)) \\
    &= 4\cdot 2(d_0^2 + 1) \\
    &= 8(d_0^2 + 1)
\end{align*}
since $\sigma$ is an automorphism and hence preserves global sections. 
To relate this back to the original line bundle on $A^{[2]}$, 
recall from Subsection \ref{kummer_k3} that the diagram 
\begin{center}
\begin{tikzcd}
A\times Km(A) \arrow[r, "\tilde{\mu}"] \arrow[d, "\pi_1"'] & {A^{[2]}} \arrow[d, "\Sigma"] \\
A \arrow[r, "\cdot 2"]                                     & A                            
\end{tikzcd}
\end{center}
is Cartesian, so that the pushforward
\begin{align*}
    \tilde{\mu}_*\O_{A\times Km(A)} &\cong \Sigma^*(\cdot 2)_*\O_A \\
    &\cong \bigoplus_{L\in\Pic^0(A^{[2]})[2]}L
\end{align*}
is just the direct sum of the 16 2-torsion line bundles on $A^{[2]}$ by \cite[p.72]{mumford2008abelian} as in the proof of Lemma \ref{Fkl_cohomology}. 

We conclude by push-pull that  
\[
\sum_{L\in\Pic^0(A^{[2]})[2]}\dim H^0(A^{[2]}, \Theta_{[2]}^{\otimes d}\otimes\Sigma^*\Theta^{\otimes e}\otimes\O_{A^{[2]}}(-fB)\otimes L) = 8(d_0^2 + 1) \ge 80
\]
since $d_0 \ge 3$.
Since there are 16 line bundles appearing in this sum, there must be a term with $h^0$ at least 5. Since the map $L\mapsto L_{[2]}$ induces an isomorphism $\Pic^0(A)\rightarrow\Pic^0(A^{[2]})$, 
every line bundle in the numerical equivalence class of $\Theta_{[2]}$ is of the form $(\Theta\otimes L_0)_{[2]}$. Every such line bundle has exactly one section since 
\[
    \dim H^0(A\times A, (\Theta\otimes L_0)\boxtimes (\Theta\otimes L_0)) = 1.
\]
We have a contradiction then - every line bundle in the equivalence class of $x = \Theta_{[2]}$ has one section, but there are line bundles with at least 5 sections in the equivalence class of $g^*x = d_1x + e_1y - f_1B$.
We conclude that we can rule out all cases for $(d,-f)$ starting at $(17,-12)$. 

We note that if we remove the negative $B$ coefficient and apply formula \ref{global_sections_formula} to the numerical 
equivalence class $17x - 8y$ we obtain a $\frac{17^2+1}{2} = 145$-dimensional space of sections, so it is plausible for there to be 
line bundles in the class of the first solution $17x-8y-12B$ with 5 sections. 
\newline\newline
We conclude that we can rule out the determinant $dc-af = -1$ case, and hence that the only possibility for the action of $g^*$ on $\NS(A^{[2]})$ 
is as the identity. 
\subsection{Fixing the exceptional divisor in Picard and as a subvariety}\label{fix_E_Pic_subvar}\hfill\\
That $g^*B = B$ in the Neron-Severi group means that the difference $D = g^*B - B$ lies in $\Pic^0(A^{[2]})$. Since $\Sigma^*:\Pic^0(A)\rightarrow \Pic^0(A^{[2]})$ is surjective we may 
write $g^*B = B + \Sigma^*D'$ for some $D' \in \Pic^0(A)$. 

Since $\pi_*\O_{A^{[2]}}(B) \cong \O_{A^{(2)}}$ (Section \ref{hilb_prelim}) we have that 
\[
    h^0(A^{[2]}, \O_{A^{[2]}}(B)) = h^0(A^{(2)}, \O_{A^{(2)}}) = 1.
\]
As pushforward preserves dimension of 
global sections, we find that $1 = h^0(A, \Sigma_*g^*\O_{A^{[2]}}(B)) = h^0(A, \Sigma_*\O_{A^{[2]}}(B + \Sigma^*D')) = h^0(A, \O_A(D'))$, since the map $\Sigma$ factors through the Hilbert-Chow morphism.
Since $D'$ lies in $\Pic^0(A)$ and has a global section it must be trivial, and hence we conclude that $g^*B = B$ in Picard. 

Since $g^*B = B$ in $\Pic(A^{[2]})$, if we write $E$ for the actual subvariety of nonreduced subschemes in $A^{[2]}$ then $g(E)$ 
is another divisor within the linear equivalence class of $E$ since as a divisor class $E = 2B$. Since we know that $h^0(A^{[2]}, \O_{A^{[2]}}(E)) = 1$ (via pushforward under $\pi_*$ as for $B$), we conclude 
that $g(E) = E$ and hence $g$ restricts to an automorphism of $E \subset A^{[2]}$. 

\subsection{Lifting to the Cartesian product and proving naturality}\label{A2_lift_naturality}\hfill\\
The restriction of the Hilbert-Chow morphism $\pi: E\subset A^{[2]}\rightarrow \Delta\subset A^{(2)}$ has $\P^1$-fibers (see the proof of \cite[Corollary 3]{iarrobino1972punctual}). 
As $\Delta\cong A$ and there are no nontrivial maps from the projective space $\P^1$ to the abelian variety $A$, the restriction of 
$g$ to $E$ must necessary send $\P^1$-fibers of $\pi$ to $\P^1$-fibers, as the image of a $\P^1$-fiber under $\pi\circ g$ must be a 
single point. By \cite[Proposition 7]{BOR}, $g$ descends to an automorphism $\bar{g}$ of $A^{(2)}$. 
By \cite[Proposition 9, 12]{BOR}, this automorphism lifts to an automorphism $\hat{g}$ of the Cartesian product $A^2$ such that we have the natural commutative diagram 
\begin{center}
\begin{tikzcd}
A^2 \arrow[d, "p"'] \arrow[r, "\hat{g}"] & A^2 \arrow[d, "p"] \\
A^{(2)} \arrow[r, "\bar{g}"]             & A^{(2)}           
\end{tikzcd}.
\end{center}

Since $\hat{g}$ is an automorphism of the complex torus $A^2$, we may write 
\[
\hat{g} = t_{(x_0, y_0)}\circ \hat{h}
\]
for some points $x_0, y_0\in A$ where $\hat{h}(0,0) = 0$ and $\hat{h}$ is invertible. Since $\hat{h}$ is a group endomorphism of the product $A\times A$ we may 
write 
\[
    \hat{h} = 
    \begin{bmatrix}
    \hat{h}_{11} & \hat{h}_{12} \\
    \hat{h}_{21} & \hat{h}_{22} 
    \end{bmatrix}
\]
for some endomorphisms $\hat{h}_{ij}:A\rightarrow A$ per our discussion in Section \ref{counterexample_section}. Since $\End(A)\cong\Z$, $\hat{h}$ is an invertible $2\times 2$ integer matrix with determinant $\pm 1$.  

Since $\hat{g}$ descends to the automorphism $\bar{g}$ under the covering $p:A^2\rightarrow A^{(2)}$, we must have the 
$\mathfrak{S}_2$-equivariance condition 
\[
    (\hat{h}_{11}(x) + \hat{h}_{12}(y) + x_0, \hat{h}_{21}(x) + \hat{h}_{22}(y) + y_0) =
    (\hat{h}_{22}(x) + \hat{h}_{21}(y) + y_0, \hat{h}_{12}(x) + \hat{h}_{11}(y) + x_0)
\]
for all $x,y\in A$. Setting $x = y = 0$ gives $x_0 = y_0$, and then setting $y = 0$ and using that $x_0 = y_0$ gives that 
$\hat{h}_{11} = \hat{h}_{22}, \hat{h}_{21} = \hat{h}_{12}$. Let us write $\hat{h}_1$ for $\hat{h}_{11} = \hat{h}_{22}$ and 
$\hat{h}_2$ for $\hat{h}_{12} = \hat{h}_{21}$, so that our matrix can be rewritten as 
\[
    \hat{h} = 
    \begin{bmatrix}
    \hat{h}_1 & \hat{h}_2 \\
    \hat{h}_2 & \hat{h}_1
    \end{bmatrix}.
\]
Since this matrix has determinant $\pm 1$ and the $\hat{h}_i$ are integers, we find that either $\hat{h}_1=\pm 1, \hat{h}_2 = 0$, or 
$\hat{h}_1 = 0, \hat{h}_2 = \pm 1$ since we may factor the Diophantine equation $\hat{h}_1^2 - \hat{h}_2^2 = \pm 1$ as 
$(\hat{h}_1-\hat{h}_2)(\hat{h}_1+\hat{h}_2) = \pm 1$. 

Thus, $\hat{g} = t_{(x_0, x_0)}\circ \hat{h}$ where we have the choices 
\[
\hat{h} = 
\begin{bmatrix}
\pm 1 & 0 \\
0 & \pm 1
\end{bmatrix}, 
\begin{bmatrix}
0 & \pm 1\\
\pm 1 & 0 
\end{bmatrix}
\]
(in all cases the nonzero entries are equal). In the first case, we have 
\[
    \hat{g}(x,y) = (x_0 \pm x, x_0 \pm y), 
\]
so that the corresponding automorphism $\bar{g}$ on $A^{(2)}$ is the natural automorphism $(t_{x_0}\circ \pm 1)^{[2]}$. In the second case, 
\[
    \hat{g}(x,y) = (x_0 \pm y, x_0 \pm x), 
\]
which again yields $\bar{g} = (t_{x_0}\circ \pm 1)^{[2]}$. Since $\bar{g}$ commutes with $g:A^{[2]}\rightarrow A^{[2]}$ under the Hilbert-Chow morphism and $A^{(2)}-\Delta \cong A^{[2]}-E$ is a dense open set in $A^{[2]}$, $g$ must be the natural automorphism $(t_{x_0}\circ \pm 1)^{[2]}$. 
\newline\newline
We conclude that all automorphisms of $A^{[2]}$ are natural, as desired. 

\section{Open directions}\label{open_directions}
As we note in Question \ref{open_question}, it remains open whether all automorphisms of $A^{[2]}$ are natural for arbitrary polarization types. The only issue with extending our proof is Section \ref{fix_E_NS} where we use numerical invariants to show that the exceptional divisor is fixed by an automorphism of $A^{[2]}$ in the Neron-Severi group. Our calculation of the intersection numbers in $A^{[2]}$ in Subsection \ref{intersection_number_subsection} allowed for any polarization type. Moreover, our formulas for the dimensions of global sections of line bundles in Subsection \ref{global_sections_calcs} can be extended to non-principal polarizations just by inserting the appropriate intersection numbers, and the work of Subsection \ref{global_sections_hilb2} in estimating theta functions that vanish at 0 to prescribed multiplicity can also be carried out for arbitrary polarizations. 

The main work of showing that the exceptional divisor is fixed in Neron-Severi came down to the determinant $-1$ case, where we ruled out the various nontrivial solutions of the corresponding Pell's equation using different techniques. Our investigation suggests that the arguments for the first case still work for arbitrary polarizations. As a reminder, the arguments for this case followed from the Seshadri constants of abelian surfaces and estimates of theta functions with prescribed orders of vanishing. However, the second case crucially relied on using a particular automorphism of Jacobian Kummer surfaces to rewrite a divisor class as a sum of effective classes with positive coefficients. One would need to understand the automorphisms of Kummer surfaces with arbitrary polarizations or use an alternative argument to handle the second case. 

Our investigation shows that for arbitrary polarizations one may use a different argument using the estimates from Subsection \ref{global_sections_hilb2} to rule out all nontrivial solutions of the corresponding Pell's equation starting from the third solution, but that this doesn't work for the second solution. 

In any case, one may ask whether automorphisms of $A^{[n]}$ are natural for abelian surfaces $A$ of Picard rank 1 for $n\ge 3$. We believe that it would be difficult to simply generalize our argument here for higher $n$, and that perhaps another idea is necessary. Calculating top intersection numbers would grow more painful as $n$ increases (though is possible in principle by using the recursions in \cite{EGL}), and the corresponding Diophantine equations in the Neron-Severi group would grow more complex. Even if one could calculate out the Diophantine equations for higher $n$, it is unclear whether the analogous numerical invariants to those calculated in Section \ref{numerical_calc_section} are both tractable and sufficient to rule out nontrivial solutions of these Diophantine equations. 

\section{Appendix}
\subsection{Description of $S^{[n]}$ as a blowup}\label{Sn_blowup_description}\hfill\\
Let $S$ be a smooth complex projective surface, though much of this discussion holds in greater generality. 
Instead of working intrinsically with $S^{[n]}$, one may wish to describe $S^{[n]}$ in terms of the simpler Cartesian or symmetric 
products $S^n, S^{(n)}$. One may attempt to either blow up first and then identify points under the action of $\S_n$, 
or one may choose to first identify points and then blow up. The former approach is commonly taken; in \cite{nakajima1999lectures}
Nakajima describes the Hilbert scheme of two points $S^{[2]}$ as the quotient of the blowup $Bl_{\Delta}S^2$ of the diagonal by the 
induced $\S_2$ action: 
\[
    S^{[2]} \cong Bl_{\Delta}S^2 / \S_2.
\]
For higher $n$, Tikhomirov \cite{tikhomirov1994standard} showed how to construct an iterated blowup diagram 
\begin{center}
\begin{tikzcd}
          & \widetilde{S^n} \arrow[ld, "\pi"'] \arrow[d, "\sigma"] \\
{S^{[n]}} & S^n                                               
\end{tikzcd}
\end{center}
relating the Cartesian product and the Hilbert scheme of points. We have simplified some notation as we do not fully describe the 
details of the construction here. The space $\widetilde{S^n}$ is not one single blowup but rather an iterated sequence of blowups 
of pullbacks of incidence schemes. While the map $\pi:\widetilde{S^n}\rightarrow S^{[n]}$ is $\S_n$-equivariant and generically 
finite of degree $n!$, for $n\ge 3$ it has some positive dimensional fibers, so that this construction does not realize 
$S^{[n]}$ as the geometric quotient of $\widetilde{S^n}$ by the finite group $\S_n$. Hence, we cannot immediately descend an 
$\S_n$-equivariant automorphism of $\widetilde{S^n}$ down to an automorphism of $S^{[n]}$ as we might want (were $S^{[n]}$ a quotient 
of $\widetilde{S^n}$ by the finite group $\S_n$, then we could descend by the universal property of such a quotient). To say more 
about why this map has positive dimensional fibers, we recall that Tikhomirov uses the morphism $\varphi_n$ in the diagram 
\begin{center}
\begin{tikzcd}
          & {Bl_{\mathcal{Z}_{n-1}}(S\times S^{[n-1]})} \arrow[d] \arrow[ld, "\varphi_n"'] \\
{S^{[n]}} & {S\times S^{[n-1]}}                                                           
\end{tikzcd}
\end{center}
first defined by Ellingsrud (see \cite{ellingsrud1998intersection} for a good exposition) which resolves the rational map 
$S\times S^{[n-1]}\dashrightarrow S^{[n]}$ of adding a point of $S$ to a subscheme of length $n-1$ (which is only defined when the 
added point does not intersect the support of the subscheme). In the diagram, $\mathcal{Z}_{n-1}$ is the universal subscheme 
in $S\times S^{[n-1]}$. As described in \cite{ellingsrud1998intersection}, this blowup 
$Bl_{\mathcal{Z}_{n-1}}(S\times S^{[n-1]})$ may be identified with the nested Hilbert scheme $S^{[n-1, n]}$, such that $\varphi_n$ is 
then just the projection from $S^{[n-1,n]}$ to $S^{[n]}$. For $n\ge 3$ this projection can have positive dimensional fibers over 
non-reduced points. As an example for $S=\A^2$, consider the fiber of $\phi_3:(\A^2)^{[2,3]}\rightarrow (\A^2)^{[3]}$ over the 
degree 3 point defined by the ideal $I = (x^2, xy, y^2)$. Adding any nonzero linear polynomial $ax+by$ will give a colength 2 
ideal containing $I$, so that the fiber is positive dimensional here. 

If we could instead realize $S^{[n]}$ directly as a blowup of $S^{(n)}$, then from the universal property of blowups \cite[Corollary II.7.15]{hartshorne1977algebraic} we could lift automorphisms of $S^{(n)}$ fixing the ideal sheaf being blown 
up to automorphisms of $S^{[n]}$. Such descriptions exist in varying levels of generality. Before stating these descriptions, 
we illustrate the situation for $S=\A^2$ and $n=2$. 

Set $S = \Spec(\C[x,y])$ and set $R = \C[x_1,y_1,x_2,y_2]$, so that $S^2 = \Spec(R)$ and $S^{(2)} = \Spec(R^{\S_2})$ where the $\S_2$ 
action swaps $x_1$ with $x_2$ and $y_1$ with $y_2$. Set $p:S^2\rightarrow S^{(2)}$ to be the standard projection map, so that the dual 
map $p^{\#}:R^{\S_2}\rightarrow R$ is just the inclusion map. 
Given an ideal $I\subset R^{\S_2}$, to compare the blowup of $S^{(2)}$ at $I$ 
with a blowup of $S^2$, we need to understand the inverse image ideal sheaf $p^{-1}I\cdot \O_{S^2}$, which in the affine case 
just corresponds to the ideal of $R$ generated by the image $p^{\#}(I)$. While the ideal of the diagonal in $S^2$ is generated 
as 
\[
    I_{\Delta, S^2} = (x_1-x_2, y_1-y_2),
\]
these generators are not elements in $R^{\S_2}$ as they are alternating rather than symmetric polynomials for this $\S_2$-action. 
However, the square of this ideal 
\[
    I_{\Delta, S^{(2)}} = ((x_1-x_2)^2, (x_1-x_2)(y_1-y_2), (y_1-y_2)^2)
\]
does define an ideal in $R^{\S_2}$ whose vanishing yields the diagonal. Since taking the tensor power of a coherent sheaf of ideals 
does not affect the corresponding blowup \cite[Exercise II.7.11(a)]{hartshorne1977algebraic}, we obtain a blowup diagram 
relating $Bl_{I_{\Delta, S^{(2)}}}S^{(2)}$ and $Bl_{I_{\Delta, S^2}}S^2$ which one can use to prove that 
$Bl_{I_{\Delta, S^{(2)}}}S^{(2)}\cong Bl_{I_{\Delta, S^2}}S^2/\S_2 \cong S^{[2]}$. 

Haiman generalized this to all $n$, proving the following result (\cite[Proposition 2.6]{haiman1998t} and subsequent discussion): 
\begin{proposition}
Let $k$ be an algebraically closed field of characteristic zero or greater than $n$. 
Set $k[X,Y] = k[x_1,y_1,...,x_n,y_n]$ with the $\S_n$-action $\sigma(x_i) = x_{\sigma(i)}, \sigma(y_j) = y_{\sigma(j)}$. Let 
$A$ be the subset of polynomials $f$ such that $\sigma\cdot f = (-1)^{|\sigma|}f$, where $|\sigma| = 0,1$ is the sign of the 
permutation, and set $A^i$ to be the vector space spanned by products $f_1\cdot...\cdot f_i$, with the $f_j\in A$. $A^2$ is thus 
a $k[X,Y]^{\S_n}$ ideal, and the Hilbert scheme of points $(\A_k^2)^{[n]}$ is isomorphic to the blowup 
$\Proj(k[X,Y]^{\S_n}\oplus A^2\oplus A^4\oplus ...)$ of the symmetric product $(\A_k^2)^{(n)}$ at the ideal sheaf $A^2$. 
\end{proposition}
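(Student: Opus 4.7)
The plan is to identify $(\mathbb{A}_k^2)^{[n]}$ with the blowup via the universal property of blowups, exhibiting a natural invertible sheaf on the Hilbert scheme that pulls back $A^2$, and then showing that this map realizes $(\mathbb{A}_k^2)^{[n]}$ as the full blowup. First, I would confirm the purely algebraic content: a product of two sign-alternating polynomials is symmetric, and multiplying a sign-alternating polynomial by a symmetric one is again sign-alternating, so $A$ is a $k[X,Y]^{\mathfrak{S}_n}$-module, $A^2 \subset k[X,Y]^{\mathfrak{S}_n}$ is an ideal, and $A^{2i} \cdot A^{2j} \subset A^{2(i+j)}$, so $\bigoplus_i A^{2i}$ is the Rees algebra of $A^2$ and its Proj is $\mathrm{Bl}_{A^2}(\mathbb{A}_k^2)^{(n)}$. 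Also $V(A^2)$ is set-theoretically the big diagonal, since the Vandermonde-type element $\prod_{i<j}(x_i-x_j) \in A$ already vanishes exactly on the diagonal.

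Next I would define, on $H := (\mathbb{A}_k^2)^{[n]}$, the line bundle $\mathcal{L} := \det \mathcal{B}^{\vee}$, where $\mathcal{B}$ is the tautological rank-$n$ bundle $q_* \mathcal{O}_{\mathcal{Z}}$ pushed forward from the universal subscheme $\mathcal{Z} \subset H \times \mathbb{A}^2$. The class of $\mathcal{L}$ is (up to identification) the divisor class such that $\mathcal{L}^{\otimes 2}$ is the pullback under Hilbert--Chow $\pi \colon H \to (\mathbb{A}^2)^{(n)}$ of the ideal sheaf $\widetilde{A^2}$ associated with $A^2$. The key claim, to be proved by a local computation at a point of $H$, is that $\pi^{-1}(A^2) \cdot \mathcal{O}_H$ is an invertible sheaf (namely $\mathcal{L}^{\otimes 2}$). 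Over the open locus of reduced subschemes this is automatic since $\pi$ is an isomorphism and $A^2$ is trivial there; along the exceptional locus one uses that the image under the morphism $H \to (\mathbb{A}^2)^n$ sending a point to the spectrum of its coordinate ring with a chosen ordering (the isospectral Hilbert scheme construction) pulls a Vandermonde-like generator of $A$ to a local trivialization of $\mathcal{L}$. Granting this invertibility, the universal property of blowups (\cite[Corollary II.7.15]{hartshorne1977algebraic}) yields a unique morphism $f \colon H \to \mathrm{Bl}_{A^2}(\mathbb{A}^2)^{(n)}$ over $(\mathbb{A}^2)^{(n)}$.

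To promote $f$ to an isomorphism, I would show $\mathcal{L}$ is $\pi$-relatively ample and then identify the graded ring of direct images $\bigoplus_{m \geq 0} \pi_* \mathcal{L}^{\otimes 2m}$ with the Rees algebra $\bigoplus_m A^{2m}$. Relative ampleness of $\mathcal{L}$ can be checked on fibers of $\pi$, which are finite unions of punctual Hilbert schemes, where $\mathcal{L}$ cuts out standard very ample divisors. With that, $H = \mathrm{Proj}\bigl(\bigoplus_m \pi_* \mathcal{L}^{\otimes 2m}\bigr)$ over the symmetric product; so it suffices to identify $\pi_* \mathcal{L}^{\otimes 2m} = \widetilde{A^{2m}}$ as subsheaves of the constant sheaf of rational functions, which since $\pi$ is birational reduces to comparing them on the reduced open locus and checking both are reflexive / saturated in the same sense.

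The main obstacle is precisely the identification $\pi_* \mathcal{L}^{\otimes 2m} \cong \widetilde{A^{2m}}$, because a priori the left side might be larger than $A^{2m}$ near the diagonal; controlling this requires the cohomological input that the higher direct images $R^i \pi_* \mathcal{L}^{\otimes 2m}$ vanish for $i > 0$ and $m \geq 0$, together with a flatness statement for the isospectral Hilbert scheme over $H$. This is exactly the core of Haiman's work on the polygraph ring and his proof of the $n!$ conjecture: one introduces the reduced fiber product $X_n := H \times_{(\mathbb{A}^2)^{(n)}} (\mathbb{A}^2)^n$, proves it is Cohen--Macaulay and flat of degree $n!$ over $H$, and deduces the required vanishing together with the identification of the sign-isotypic pushforward of $\mathcal{O}_{X_n}$ with $\mathcal{L}$. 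I would therefore budget most of the work for verifying these two geometric properties of $X_n$; once they are in hand, the identification of graded rings, and hence the isomorphism $f$, follows formally.
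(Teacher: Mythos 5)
The paper does not prove this proposition at all: it is quoted verbatim from Haiman (\cite[Proposition 2.6]{haiman1998t}), so there is no in-paper argument to compare against. Judged on its own, your outline has the right logical skeleton (show the inverse image ideal sheaf of $A^2$ is invertible, get a map to the blowup by the universal property, then upgrade to an isomorphism), and you correctly locate the hard point of your chosen route in the identification $\pi_*\mathcal{L}^{\otimes 2m}\cong\widetilde{A^{2m}}$. But that route is far heavier than necessary: computing $H^0$ of powers of the tautological determinant exactly, via the isospectral Hilbert scheme $X_n$, its Cohen--Macaulayness and the polygraph ring, is the core of Haiman's 2001 $n!$ theorem, whereas the blowup description you are asked to prove is established in the 1998 paper by an elementary chart-by-chart comparison: cover $(\mathbb{A}^2_k)^{[n]}$ by the affine charts $U_\mu$ indexed by partitions of $n$, observe that the coordinate functions $c^{hk}_{rs}$ on $U_\mu$ are the ratios $\Delta_{D'}/\Delta_{d(\mu)}$ of alternants, show the $\Delta_D$ span $A$ (this is where the hypothesis on the characteristic enters), and match $U_\mu$ with the standard affine chart of $\Proj(\bigoplus_d A^d)$ determined by the degree-one element $\Delta_{d(\mu)}$. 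That argument needs neither relative ampleness, nor cohomology vanishing, nor flatness of $X_n$. As written, your proposal defers its main obstacle to a theorem much deeper than the statement being proved, so it is not self-contained in any practical sense.

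Two concrete slips to fix regardless of route. First, $\prod_{i<j}(x_i-x_j)$ does \emph{not} vanish exactly on the big diagonal of $(\mathbb{A}^2)^n$: it also vanishes wherever two points share only their $x$-coordinate. The correct statement is that the common zero locus of all of $A$ (equivalently of $A^2$, or of the finitely many $\Delta_D$) is the big diagonal; a single Vandermonde in one set of variables does not suffice. Second, an inverse image ideal sheaf is by definition a subsheaf of $\mathcal{O}_H$, hence of the form $\mathcal{O}_H(-D)$ for an effective Cartier divisor $D$; with the paper's normalization $c_1(\mathcal{O}_X^{[n]})=-B$ you want $\pi^{-1}(A^2)\cdot\mathcal{O}_H\cong(\det\mathcal{B})^{\otimes 2}\cong\mathcal{O}_H(-E)$, not $(\det\mathcal{B}^{\vee})^{\otimes 2}\cong\mathcal{O}_H(E)$, which is not an ideal sheaf at all. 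Neither slip is fatal to the strategy, but both would propagate into the local computation you propose.
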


This description has been significantly generalized by Ekedahl and Skjelnes \cite{ekedahl2014recovering} and similarly by Rydh and 
Skjelnes \cite{rydh2010intrinsic}. For $\dim X \ge 3$ the Hilbert scheme $X^{[n]}$ is no longer necessarily the closure of the 
locus of reduced subschemes as is the case for dimensions 1 and 2 (we call this closure the \textit{smoothable locus}). 
It is the smoothable locus which they show is the blowup 
of the symmetric product of $X$ at a particular ideal sheaf for an arbitrary scheme $X$ (in fact, they show this even more generally 
for $X\rightarrow S$ a family of algebraic spaces, but we do not need that level of generality). We will use the formalism of 
Rydh and Skjelnes. 

Given $R$ an $A$-algebra, we write $\T_A^n R$ for the $n$-fold tensor product of $R$ over $A$ with the obvious $\S_n$-action 
and $\TS_A^n R$ for the subalgebra of symmetric tensors in $\T_A^n R$. Given $x_1,...,x_n\in R$, we define the alternator map 
\[
    \alpha(x_1,...,x_n) := \sum_{\sigma\in \S_n}(-1)^{|\sigma|}x_{\sigma(1)}\otimes ... \otimes x_{\sigma(n)}
\]
which induces a $\TS_A^n R$-linear map from $\T_A^n R$ to $\T_A^n R$. The product map 
\[
    \alpha\times\alpha: \T_A^n R \otimes_{\TS_A^n R} \T_A^n R \rightarrow \T_A^n R
\]
produces $\S_n$-invariant tensors, as one can check that $\alpha(\sigma(x)) = (-1)^{|\sigma|}\alpha(x)$, so that any element 
$\alpha(x)\alpha(y)$ is invariant. The image of $\alpha\times\alpha$ defines an ideal in the ring $\TS_A^n R$, which we call the 
\textit{canonical ideal} $I_R$. By \cite[Proposition 6.8]{rydh2010intrinsic} these canonical ideals on product affine schemes can be 
glued together to yield an ideal sheaf $\mathcal{I}$ on $X^{(n)}$ for a scheme $X$, and by Corollary 6.18 the blowup of $X^{(n)}$ 
in $\mathcal{I}$ is isomorphic to the smoothable locus of $X^{[n]}$. 
We can then prove the key Proposition \ref{induced_aut_lemma}, whose statement we recall: 
\newline\newline
\textbf{Proposition \ref{induced_aut_lemma}.}
\textit{If $X$ is either an affine scheme or a projective scheme over an infinite field $k$,
then any $\S_n$-equivariant automorphism $f:X^n\rightarrow X^n$ induces an automorphism on the smoothable locus of $X^{[n]}$.}
\newline\newline
{\it Proof.} Since $f$ is $\S_n$-equivariant, it descends to an automorphism $\bar{f}$ on $X^{(n)} := X^n / \S_n$ by the universal property of 
quotients by finite groups. On affine opens the associated ring homomorphism $\bar{f}^{\#}$ is just the restriction of $f^{\#}$ to 
$\S_n$-invariant functions, which works by the equivariance of $f$. 
Since $\bar{f}$ is an automorphism, it is flat, so the inverse image ideal sheaf 
$\bar{f}^{-1}(\mathcal{I})\cdot \O_{X^{(n)}}$ is the same as the pullback $\bar{f}^*\mathcal{I}$. By \cite[Corollary II.7.15]{hartshorne1977algebraic}, it suffices to show that $\bar{f}^*\mathcal{I} = \mathcal{I}$ to get an automorphism on the blowup of $X^{(n)}$ 
along $\mathcal{I}$, which as discussed is the smoothable locus of $X^{[n]}$.  

We wish to find an affine cover $\{U_i = \Spec(R_i)\}$ of $X$ such that 
$X^n$ is covered by the product affine schemes $U_i^n = \Spec(\T^n R)$, and similarly that $X^{(n)}$ is covered by affine schemes 
$U_i^{(n)} = \Spec(\TS^n R)$. If $X$ is affine this is immediate as we may just take the single product affine $X$ itself. 
If $X$ is projective, embed $X$ as a closed subscheme of some projective space $\P_k^n$. Since $k$ is infinite, for 
any finite collection of points in $X$ we can find a hyperplane in $\P_k^n$ which does not intersect any of the points. Hence, 
since the complement in $X$ of a hyperplane section is affine, we find covers of $X^n, X^{(n)}$ by product affines. 

Note that since $X^{(n)}$ has the quotient topology of $X^n$ under $\S_n$, open sets in $X^{(n)}$ correspond to $\S_n$-invariant 
open sets upstairs in $X^n$. We first wish to show that pullback under $f$ does in fact send local sections of $\mathcal{I}$ to 
local sections of $\mathcal{I}$ - i.e. that $\bar{f}^{\#}$ defines a module homomorphism $\mathcal{I}(U)\rightarrow \mathcal{I}(\bar{f}^{-1}(U))$ 
for each open set $U\subset X^{(n)}$. It is easy enough to show this for the product affines $\{U_i^{(n)}\}$. 
Let $s\in\mathcal{I}(U^{(n)})$ for some affine $U\subset X$ be given, so that $s$ is of the form 
$s = \sum_j r_j\alpha(x_j)\alpha(y_j)$
for elements $r_j\in \O_{X^{(n)}}(U^{(n)}), x_j, y_j\in \O_{X^n}(U^n)$ by definition of $\mathcal{I}$ (it is straightforward to 
describe elements of $\mathcal{I}$ over product affines, but not necessarily for general open sets). 
Since $\bar{f}^{-1}(U^{(n)})$ is open in $X^{(n)}$, its lift 
upstairs $f^{-1}(U^n)$ is $\S_n$-invariant, and hence we may speak of the action $\sigma^{\#}$ on $\O_{X^n}(f^{-1}(U^n))$ given by a 
permutation $\sigma$. For $x\in \O_{X^n}(U^n)$, we directly calculate that 
\begingroup
\allowdisplaybreaks
\begin{align*}
    f^{\#}(\alpha(x)) &= f^{\#}\left(\sum_{\sigma} (-1)^{|\sigma|} \sigma^{\#}(x)\right) \\
        &= \sum_{\sigma}(-1)^{|\sigma|} f^{\#}\sigma^{\#}(x) \\
        &= \sum_{\sigma}(-1)^{|\sigma|} \sigma^{\#}f^{\#}(x) \\
        &= \alpha(f^{\#}(x))
\end{align*}
where we use that $f$ is $\S_n$-equivariant to commute $\sigma^{\#}, f^{\#}$. Thus, for $s$ as before we directly calculate that 
\begin{align*}
    \bar{f}^{\#}(s) &= \bar{f}^{\#}\left(\sum_j r_j\alpha(x_j)\alpha(y_j)\right) \\
        &= \sum_j \bar{f}^{\#}(r_j)f^{\#}(\alpha(x_j))f^{\#}(\alpha(y_j)) \\
        &= \sum_j \bar{f}^{\#}(r_j)\alpha(f^{\#}(x_j))\alpha(f^{\#}(y_j))
\end{align*}
\endgroup
where we swap between $\bar{f}$ and $f$ depending on whether we are pulling back invariant sections from $X^{(n)}$ or sections coming 
upstairs from $X^n$. If $X$ is affine then this formula immediately shows that $\bar{f}^{\#}$ is an automorphism of $\mathcal{I}$. 
We thus suppose for the rest of the proof that $X$ is projective over an infinite field $k$. 

We first check that this last term 
\[
\sum_j \bar{f}^{\#}(r_j)\alpha(f^{\#}(x_j))\alpha(f^{\#}(y_j))
\]is indeed an element of $\mathcal{I}(\bar{f}^{-1}(U))$. The open 
set $\bar{f}^{-1}(U)$ is covered by the intersections $\bar{f}^{-1}(U)\cap U_i^{(n)}$ with product affines, and the restriction 
map $\O_{X^{(n)}}(U_i^{(n)}) \twoheadrightarrow \O_{X^{(n)}}(\bar{f}^{-1}(U)\cap U_i^{(n)})$ is a surjection since it is dual to 
an injective map of affine schemes (the intersection of two affine opens being affine). This surjection restricts to a surjection of 
the corresponding ideal sheaves, so that every element of $\mathcal{I}(\bar{f}^{-1}(U)\cap U_i^{(n)})$ can be written in the 
standard form $\sum_j r_j \alpha(x_j)\alpha(y_j)$, since restriction maps commute with $\alpha$. Thus, 
$\bar{f}^{\#}(s) = \sum_j \bar{f}^{\#}(r_j)\alpha(f^{\#}(x_j))\alpha(f^{\#}(y_j))$ restricts to an element of 
$\mathcal{I}(\bar{f}^{-1}(U)\cap U_i^{(n)})$ for each $i$, and these restrictions necessarily glue since they arise from a section 
$\bar{f}^{\#}(s)$ on $\bar{f}^{-1}(U)$, so $\bar{f}^{\#}(s)$ lives in $\mathcal{I}(\bar{f}^{-1}(U))$ as desired. 

As we have just shown that $\bar{f}^{\#}$ sends sections of $\mathcal{I}(U_i^{(n)})$ to $\mathcal{I}(f^{-1}(U_i^{(n)}))$ for product 
affines $U_i^{(n)}$, and these product affines cover $X^{(n)}$, it follows that $\bar{f}^{\#}$ does the same for arbitrary open sets 
$U$. To elaborate, let $s$ be a section of $\mathcal{I}(U)\subset \O_{X^{(n)}}(U)$ for $U\subset X^{(n)}$ open. Similar to our 
preceding argument, the intersections $U_i^{(n)}\cap U$ cover $U$, and each element of $\mathcal{I}(U_i^{(n)}\cap U)$ is the 
restriction of an element of $\mathcal{I}(U_i^{(n)})$. Write $s_i$ for the restriction of $s$ to $\mathcal{I}(U_i^{(n)}\cap U)$, and 
let $\tilde{s}_i$ be an element of $\mathcal{I}(U_i^{(n)})$ which restricts to $s_i$. We have just shown that 
$\bar{f}^{\#}(\tilde{s}_i)$ is an element of $\mathcal{I}(f^{-1}(U_i^{(n)}))$, so that $\bar{f}^{\#}(s_i)$ is an element of 
$\mathcal{I}(\bar{f}^{-1}(U_i^{(n)})\cap \bar{f}^{-1}(U))$ since restriction commutes with $\bar{f}^{\#}$. Since the 
$\bar{f}^{\#}(s_i)$ glue to $\bar{f}^{\#}(s)$, we see that $\bar{f}^{\#}(s)$ lives in $\mathcal{I}(f^{-1}(U))$. 

Thus, we have that pullback under $\bar{f}$ maps the ideal sheaf $\mathcal{I}$ to itself. Since $\bar{f}$ is an automorphism this 
map from $\mathcal{I}$ to itself must necessarily be injective - we have only to show that it is surjective as well. This follows by Nakayama's lemma since $\mathcal{I}$ is coherent and $X$ is projective (hence proper), see \cite[Chapter 4 Exercise 1]{friedman2012algebraic}.

We conclude that $f^*$ induces an automorphism of $\mathcal{I}$, and hence there exists an isomorphism between the blowup of 
$X^{(n)}$ at $\mathcal{I}$ and the blowup at $\bar{f}^*\mathcal{I} = \mathcal{I}$ commuting with $\bar{f}$, i.e. an automorphism 
of the smoothable locus of $X^{[n]}$. \qed
\newline\par
We do not claim that our hypotheses are the most general ones 
possible - we imagine it is possible to prove the same result for more general $X$ using the full generality of the constructions 
in \cite{ekedahl2014recovering}, \cite{rydh2010intrinsic}.

\end{document}